\numberwithin{equation}{section}
\newtheorem{theorem}{Theorem}[section]
\newtheorem{proposition}[theorem]{Proposition}
\newtheorem{lemma}[theorem]{Lemma}
\newtheorem{corollary}[theorem]{Corollary}
\newtheorem{remark}[theorem]{Remark}
\newcommand{\nor}{\Arrowvert}
\newcommand{\rad}{{\text{\upshape rad}}}
\def\ep{\e_p}
\def\e{{\varepsilon}}
\def\d{\delta}
\def\g{{\gamma}}
\def\L{{\Lambda}}
\def\a{{\alpha}}
\newcommand{\R}{\mathbb{R}}
\newcommand{\N}{\mathbb{N}}
\newcommand{\loc}{\mathop{\mathrm{loc}}}
\newcommand{\nod}{\mathop{\mathrm{nod}}}
\newif\ifcomment \commentfalse
\def\commentON{\commenttrue}
\long\outer\def\BC#1\EC{\ifcomment \sloppy \par \# \ldots\dotfill
{\em #1} \dotfill \# \par \fi } \commentON
\newcommand{\remove}[1]{}
\def\sideremark#1{\ifvmode\leavevmode\fi\vadjust{\vbox to0pt{\vss
 \hbox to 0pt{\hskip\hsize\hskip1em
 \vbox{\hsize2.1cm\tiny\raggedright\pretolerance10000
  \noindent #1\hfill}\hss}\vbox to15pt{\vfil}\vss}}}%
\newcommand{\edz}[1]{\sideremark{#1}}
\definecolor{cadmiumgreen}{rgb}{0.0, 0.42, 0.24}
\definecolor{darkgreen}{rgb}{0.0, 0.5, 0.2}
\definecolor{purple}{rgb}{0.5, 0.0, 0.5}
\newcommand{\AL}{\color{purple}}
\newcommand{\F}{\color{magenta}}
\newcommand{\taglia}{\color{cyan}}
\title[H\'enon problem]{The H\'enon problem with large exponent in the disc}
\author[A.~L.~Amadori, F.~Gladiali]{Anna Lisa Amadori$^\dag$,  Francesca Gladiali$^\ddag$}
\thanks{This work was supported by Gruppo Nazionale per l'Analisi Matematica, la Probabilit\`a e le loro Applicazioni (GNAMPA) of the Istituto Nazionale di Alta Matematica (INdAM). The second author is supported by Prin-2015KB9WPT and Fabbr}
\date{\today}
\address{$\dag$ Dipartimento di Scienze Applicate, Universit\`a di Napoli ``Parthenope", Centro Direzionale di Napoli, Isola C4, 80143 Napoli, Italy. \texttt{annalisa.amadori@uniparthenope.it}}
\address{$\ddag$ Dipartimento di Chimica e Farmacia, Universit\`a di Sassari, via Piandanna 4, 07100 Sassari, Italy. \texttt{fgladiali@uniss.it}}
\begin{document}
\maketitle 

\begin{abstract}
In this paper we consider the H\'enon problem in the unit disc with Dirichlet boundary conditions. We study the asymptotic profile of least energy and nodal least energy radial solutions and then deduce the exact computation of their Morse index for large values of the exponent $p$. 
As a consequence of this computation a multiplicity result for positive and nodal solutions is obtained.

\end{abstract}

{\bf Keywords: } nodal solutions; concentration phenomena; Morse index; least energy, radial and non-radial solutions.

{\bf AMS Subject Classifications:}  35J91, 35B06, 35B40, 35P05

\tableofcontents

\section{Introduction}\label{sec:intro}
In this paper we study problem
\begin{equation}\label{H}
\left\{\begin{array}{ll}
-\Delta u = |x|^{\alpha}|u|^{p-1} u \qquad & \text{ in } B, \\
u= 0 & \text{ on } \partial B,
\end{array} \right.
\end{equation}
where $\a\geq 0$, $p>1$ and 
$B$ stands for the unit ball of the plane. 
When $\a>0$ problem \eqref{H} is known as the H\'enon problem since it has been introduced by H\'enon in \cite{H} in the study of stellar clusters in radially symmetric settings, in 1973.
For $\a=0$ problem \eqref{H} coincides with the Lane-Emden problem
\begin{equation}\label{LE}\left\{\begin{array}{ll}
-\Delta u = |u|^{p-1} u \qquad & \text{ in } B, \\
u= 0 & \text{ on } \partial B,
\end{array} \right.
\end{equation}
and we will see that the connections between \eqref{H} and \eqref{LE} are deeper.  Indeed radial solutions to \eqref{H} can be viewed as radial solutions to \eqref{LE} in a sense which will be clarified in Section \ref{sec:2}.
\\
The appeal of the $\a>0$ case is due to various aspects. First, when the dimension $N$ of the space is $N\geq 3$, 
\eqref{H} admits solutions also in the supercritical range of $p$, as observed by Ni in \cite{Ni}, where another {\em critical} exponent has been shown in the radial framework.  The second main reason of interest is the symmetry breaking phenomenon due to the term $|x|^\a$ which allows the coexistence of radial and nonradial solutions also in the case when they are positive. It is known indeed that also solutions which minimize the energy are not radial when $\a$ is large enough, see \cite{SSW}.
\\ 
 In this paper, carrying on the study of the H\'enon problem started in \cite{AG-sez2,AG-N3}, we consider \eqref{H} in the unit disc, where it admits solutions for every $p>1$ and no critical exponent appears
and in particular we focus on large values of $p$, where concentration phenomena take place and nonradial positive solutions arise. Indeed it has been shown in \cite{AdG} and \cite{GGP14}, both dealing with the Lane Emden problem, that radial solutions behave like a spike. Such kind of concentration differs from the one occurring in the higher dimensional case when $p$ approaches the critical exponent: firstly because solutions stay bounded, secondly because the concentration of nodal solutions follows different paths (and has different limit problems), depending on the nodal zone which is focused.
In this context then we analyze the asymptotic behaviour of solutions to \eqref{H} for large values of $p$, starting from the radial ones, both positive and sign changing. What we obtain is that, in the radial setting, the concentration phenomena known for the Lane Emden case ($\a=0$) in the plane extend to solutions to \eqref{H}, strengthening the connections between the two problems. In particular the limit of some of the parameters coincide as the effect of the large exponent $p$ removes the influence of the term $|x|^\a$, even if the concentration takes place at $x=0$ where its effects are usually higher. Nevertheless this term plays a significant role that shows in the asymptotic profiles of the solutions, affects their Morse index and  produces  nonradial solutions, also even positive ones. 

\ 

Let us present which type of solutions we are interested in and the main results.
 
Since we are in the plane and $H^1_0(B)$ is compactly embedded in $L^p(B)$ for every $p$, problem \eqref{H}  admits solutions for every value of $p>1$ and $\a\geq 0$. Solutions can be found minimizing the 
the Energy functional
\begin{equation}\label{eq:energy}
\mathcal E(u)=\frac 12 \int _B |\nabla u|^2-\frac 1{p+1}\int_B |x|^\a |u|^{p+1}\end{equation} 
constraint on the Nehari manifold
\[\mathcal N=\{v\in H^1_0(B)\ : \ \int_B|\nabla v|^2=\int_B |x|^\a |v|^{p+1}\}.\]
The solutions produced in this way are positive in $B$ and  are called least energy solutions.
\\
Nodal solutions can be obtained instead minimizing $\mathcal{E}(u)$ on the
nodal Nehari manifold
\[ \begin{array}{rl}\mathcal N_{\nod}=\Big\{v\in H^1_0(B) :&   v^+ \neq 0, \ \int_B|\nabla v^+|^2=\int_B |x|^\a |v^+|^{p+1}, \\ &  v^-\neq 0 , \ \int_B|\nabla v^-|^2=\int_B |x|^\a |v^-|^{p+1}\Big\} .\end{array} \]
 that has been introduced in \cite{CCN} and \cite{BW}.
Here $s^+$ ($s^-$) stands for  the positive (negative) part of $s$.
Minima on $\mathcal N_{\nod}$ have the least energy among nodal solutions to \eqref{H}
and are called least energy nodal solutions.  By a result in \cite{BW} they have two nodal regions, which are the connected components of the set $\{x\in B \ : u(x)\neq 0\}$. 
Moreover in \cite{BWW} it has been proved that least energy solutions partially inherit the symmetries of the domain, being foliated Schwarz symmetric, namely axially symmetric with respect to an axis passing through the origin and nonincreasing in the polar angle from this axis (see also \cite{PacellaWeth}).

Let us recall that the Morse index of a solution $u$ is the maximal dimension of a subspace $X\subseteq H^1_0(B)$ where the quadratic form 
	\begin{equation}\label{eq:Q-u}
	Q_u(\psi):=\int_B|\nabla \psi|^2-p|x|^\a|u|^{p-1}\psi^2\, dx\end{equation}
	is negative defined.
	 Since from one side $\mathcal E(u)$ is not bounded, and from the other side $Q_u(\psi)= \langle\mathcal E''_u\psi,\psi\rangle$  where $\mathcal E''$ denotes the second Fr\'echet derivative of $\mathcal E$ and $\langle,\rangle$ is the pairing,
		one can see that  the Morse index measures, in a sense,  how a critical point of $\mathcal E$  fails to be a minimum.
Indeed  the least energy and the least energy nodal solutions have Morse index $1$ and $2$ because they are constrained minima on manifolds of codimension $1$ and $2$, respectively, see \cite{BW}. 
	\\  From a different perspective the quadratic form $Q_u$ is associated with the linearized operator at $u$
	\[L_u(\psi):=-\Delta \psi-p |x|^\a|u|^{p-1}\psi\]
	with Dirichlet boundary conditions and  the Morse index can be computed counting (with multiplicity) the negative eigenvalues of $L_u$ in $H^1_0(B)$, but also some negative singular eigenvalues. This equivalence and the characterization of Morse index in terms of the singular eigenvalues of $L_u$	is given in details in \cite{AG-sez2} and will be essential for our aims.

The  aforementioned minimization procedure can be done, 
in principle, in any subspace of $H^1_0(B)$, and particular attention has been devoted to the one of radial functions $H^1_{0,\rad}(B)$. Restricting the energy functional and the (nodal) Nehari manifold to the space $H^1_{0,\rad}(B)$ of radial functions, we end with a least-energy positive radial solution or with a least-energy nodal radial solution to \eqref{H} that we denote respectively by $u^1_p$ and $u^2_p$ highlighting the number of nodal domains. Again the radial Morse index  of $u^1_p$ and $u^2_p$ is respectively $1$ and $2$, 
where by radial Morse index we mean the number of the negative radial eigenvalues of $L_u$, namely eigenvalues which are associated with a radial eigenfunction. But the Morse index of $u^1_p$ and $u^2_p$, depending on $p$ and on $\a$ can be larger,  implying that the least energy solutions are not radial.
Indeed this is the case at least for the nodal solution $u_p^2$ since it has been proved in \cite{AP}  for $\a=0$ and in \cite{AG-sez2} for $\a>0$ that $m(u_p^2)\ge 4 + [\a/2]$ for every $p$ (here $[\cdot]$ stands for the integer part). In \cite{LWZ}
instead it has been shown that  the Morse index of any radial solution to \eqref{H} diverges as $\a\to \infty$ 
and this implies that least energy solutions are nonradial when $\a$ is large enough.

In this paper we analyze problem \eqref{H} as $p\to \infty$, finding the asymptotic profile of radial least-energy solutions $u^1_p$ and $u^2_p$
and then compute the exact Morse index of these solutions, depending on $\a$, for sufficiently large values of the exponent $p$. Next we will see that the knowledge of the Morse index allows to distinguish between different solutions to \eqref{H}, that can be produced by minimizing the energy on the (nodal) Nehari manifold in some other subspace of $H^1_0(B)$.

\

The paper is organized as follows.
	Section \ref{sec:2} is devoted to the asymptotic of the radial solutions, which can be deduced without too much effort from the analysis carried out for the Lane-Emden problem in \cite{AdG} (concerning positive solution) and \cite{GGP14} (concerning nodal least energy solution).

For large values of $p$ problem \eqref{H} is linked to the weighted Liouville problem
\begin{equation}\label{H-L}
\left\{\begin{array}{l}
-\Delta  U=|x|^\a e^U\qquad \ \text{ in } \R^2, \\
\int_{\R^2}|x|^{\a} e^U\, dx<\infty
\end{array} \right.
\end{equation} 
and to the family of its radial solutions described by 
\begin{equation}\label{eq:U-alpha}
U_{\a;\d}(x)=\log \frac {2(2+\a)^2\delta} { (\delta+|x|^{2+\a})^2}  ,\quad \d>0 .
\end{equation}
Imposing the condition $U(0)=0$ selects uniquely the parameter $\d$ (and so the solution to \eqref{H-L}) as 
\begin{equation}\label{eq:delta-1}
\d(\a)=2(2+\a)^2.
\end{equation}
As enlightened in \cite{GGP14}, when describing the asymptotic behaviour of nodal solutions also a singular Liouville problem arises. In the present case it is
a singular version of  problem \eqref{H-L}, precisely
	\begin{equation}\label{S-H-L}
	\left\{\begin{array}{l}
	-\Delta  U=|x|^\a e^U  -(2+\a)\pi \g \d_0\qquad \ \text{ in } \R^2, \\
	\int_{\R^2}|x|^{\a} e^U\, dx<\infty
	\end{array} \right.
	\end{equation} 
	where $\d_0$ denotes the Dirac measure supported at $x=0$.
	A family of radial solutions is given by
	\begin{align}\label{Z-a-g}
	Z_{\a,\g;\d}(x)& =\log\frac{2\left(\frac{(2+\a)(2+\g)}{2}\right)^2\delta |x|^{\frac{2+\a}{2}\g}}{ \left(\delta+|x|^{\frac{(2+\a)(2+\g)}{2}}\right)^2}  & \\ \nonumber 
	& = U_{\a+\frac{2+\a}{2}\g;\d}(x)+\frac{2+\a}{2}\g\log|x| ,\quad & \d>0 
	\end{align}
	where $U_{\a+\frac{2+\a}{2}\g;\d}$ is a solution to \eqref{H-L} as defined in \eqref{eq:U-alpha} with $\a$ replaced by $\a+\frac{2+\a}{2}\g$.

In order to state the results on the asymptotic of the solutions we	need some more notations. 
Concerning the minimal energy radial solution $u^1_p$, it is known by ODE arguments that it has only one critical point at $x=0$. We therefore let
\[  \mu_p = \left| u^1_p(0)\right| , \qquad  \rho_p =   (p \, \mu_p^{p-1})^{-\frac 1{2+\a}},\]
and define the rescaling 
\[\widetilde u_{p}( x)=\frac{p(u^1_p(\rho_p x)-u^1_p(0))}{u^1_p(0)} \quad \text{ as }  |x| < \frac{1}{\rho_p}.\]
We shall see that
\begin{theorem}\label{teo:asymptotic-of-u-1}
	Let $\a\geq 0$ be fixed and let $u_p^1$ be  a least energy radial solution to \eqref{H} corresponding to  $\a$. When $p\to \infty$ we have 
	\begin{align}\label{eq:massimi-pcritico-1}
	\mu_{p}  & \to   \sqrt e, \qquad \rho_p\to 0 , \\
	\label{eq:convergenza-riscalata-1}
	\widetilde u_{p}(x)  & \to  U_{\a, \d(\a)}(x)   = \log \frac {4(2+\a)^4} { (2(2+\a)^2+|x|^{2+\a})^2} \qquad   \text{ in } C^1_{\mathrm{loc}}(\R^2)  .
	\end{align}
\end{theorem}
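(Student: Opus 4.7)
The plan is to exploit the fact, clarified in Section~\ref{sec:2}, that in the radial setting the H\'enon problem reduces to a Lane--Emden-type problem on the disc via a change of the radial variable, so that the statement follows essentially as a corollary of the Adimurthi--Grossi asymptotics \cite{AdG}. Setting $\g:=(2+\a)/2$ and $v_p(s):=u_p^1(s^{1/\g})$ for $s\in(0,1)$, the chain rule and the two-dimensional radial identity $\D v(s)=(sv'(s))'/s$ show that $v_p$ is a positive radial solution to $-\D v=\g^{-2}v^p$ in $B$ with zero Dirichlet datum, and $\Vert v_p\Vert_\infty=v_p(0)=u_p^1(0)=\mu_p$. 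The further trivial scaling $w_p:=\g^{-2/(p-1)}v_p$ then satisfies the standard Lane--Emden equation $-\D w=w^p$ in $B$ and, by uniqueness of positive radial solutions, coincides with the object studied in \cite{AdG}.

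From \cite{AdG} one has $\Vert w_p\Vert_\infty\to\sqrt{e}$, and the rescaled function $\widetilde w_p(y):=p(w_p(\s'_py)-\Vert w_p\Vert_\infty)/\Vert w_p\Vert_\infty$, with $\s'_p:=(p\Vert w_p\Vert_\infty^{p-1})^{-1/2}$, converges in $C^1_\mathrm{loc}(\R^2)$ to $U_{0,\d(0)}$. Since $\g^{-2/(p-1)}\to 1$, this immediately yields $\mu_p\to\sqrt{e}$, hence $\rho_p=(p\mu_p^{p-1})^{-1/(2+\a)}\to 0$, which proves \eqref{eq:massimi-pcritico-1}. A short computation gives the links $\s'_p=\g\s_p$ with $\s_p:=(p\mu_p^{p-1})^{-1/2}$ and $\rho_p^\g=\s_p$; therefore
\[
u_p^1(\rho_p x)=v_p(\rho_p^\g|x|^\g)=v_p(\s_p y)\quad\text{for any }y\in\R^2\text{ with }|y|=|x|^\g,
\]
so that $\widetilde u_p(x)=\widetilde v_p(y)|_{|y|=|x|^\g}$, where $\widetilde v_p(y):=p(v_p(\s_py)-\mu_p)/\mu_p=\widetilde w_p(y/\g)$. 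Thus $\widetilde v_p\to U_{0,\d(0)}(y/\g)$ in $C^1_\mathrm{loc}(\R^2)$, and elementary algebra exploiting $2\g=2+\a$, $8\g^2=\d(\a)$, $64\g^4=4(2+\a)^4$ identifies $U_{0,\d(0)}(|x|^\g/\g)$ with $U_{\a,\d(\a)}(x)$, establishing \eqref{eq:convergenza-riscalata-1}.

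The only delicate point is the transfer of the $C^1_\mathrm{loc}$ convergence down to $x=0$, since the radial map $|x|\mapsto|x|^\g$ has a degenerate Jacobian at the origin whenever $\a>0$. This is bypassed by working directly with the PDE satisfied by $\widetilde u_p$, namely $-\D\widetilde u_p=|x|^\a(1+\widetilde u_p/p)^p$ in $B_{1/\rho_p}$: the uniform bounds on $\widetilde v_p$ on compacts inherited from \cite{AdG} translate to uniform $L^\infty$ bounds on $\widetilde u_p$ on compacts of $\R^2$, and standard elliptic estimates applied to the above equation provide the $C^{1,\s}_\mathrm{loc}(\R^2)$ compactness needed to upgrade the pointwise identification of the limit to the full $C^1_\mathrm{loc}$ statement.
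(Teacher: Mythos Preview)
Your proof is correct and follows essentially the same route as the paper: reduce to the Lane--Emden problem via the radial change of variables $t=r^{(2+\a)/2}$ (your $s=|x|^\g$), invoke the Adimurthi--Grossi asymptotics of Proposition~\ref{prop-3-1}, and then identify the limit profile through the relation $U_{\a,\d(\a)}(r)=V\!\big(\tfrac{2}{2+\a}r^{(2+\a)/2}\big)$ of Remark~\ref{relazioni-limite}. The paper simply states that the convergence \eqref{eq:convergenza-riscalata-1} follows from \eqref{limit-z+} together with Remark~\ref{relazioni-limite}, without further comment.

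Your final paragraph is overly cautious: the ``degenerate Jacobian'' is not actually an obstruction. Writing $\widetilde u_p(r)=\widetilde v_p\big(\tfrac{2}{2+\a}r^{(2+\a)/2}\big)$ in the radial variable and differentiating gives $\widetilde u_p'(r)=\widetilde v_p'(s)\, r^{\a/2}$ with $s=\tfrac{2}{2+\a}r^{(2+\a)/2}$; since $\widetilde v_p'\to V'$ uniformly on $[0,S]$ for every $S>0$ and $r^{\a/2}$ is bounded on $[0,R]$, the uniform $C^1$ convergence on $[0,R]$ (and hence on $B_R\subset\R^2$) follows directly by the chain rule. Your elliptic-regularity argument is a valid alternative, but unnecessary.
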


For what concerns the minimal energy nodal radial solution $u^2_p$, we write $u^2_p(x) = u^2_p(r)$ for $r=|x|$, and denote by $r_{p}$ its unique zero in $[0,1)$, so that $A_{1,p}=[0, r_p)$ and $A_{2,p}=(r_p, 1)$ are its nodal zones. It is know by ODE argument that it has two critical points in $[0,1)$: the first one is $0$ while the second one is $\sigma_p\in A_{2,p}$.
We therefore have two extremal values
\begin{align*}
\mu_{1,p} = \left| u^2_p(0)\right| , & \qquad \mu_{2,p} = \left| u^2_p(\sigma_p)\right| ,
\intertext{two scaling parameters}
\rho_p^1 =   (p |u_p^2(0)|^{p-1})^{-\frac 1{2+\a}} , & \qquad \rho_p^2 =   (p |u_p^2(\sigma_p)|^{p-1})^{-\frac 1{2+\a}} , 
\intertext{and two rescaled functions}
\widetilde u_{1,p}(r)= \frac{p(u^2_p(\rho_p^1 r)-u^2_p(0))}{u^2_p(0)}  &  \quad \text{ as } 0\leq r<\frac 1{\rho_p^1}, \\
\widetilde u_{2,p}(r)=\frac{p(u^2_p(\rho_p^2 r)-u^2_p(\sigma_p))}{u^2_p(\sigma_p)}  & \quad   \text{ as } 0\leq r<\frac 1{\rho_p^2}.
\end{align*}

The asymptotic behaviour of $u^2_p$ is described by next Theorem.
\begin{theorem}\label{teo:asymptotic-of-u}
	Let $\a\geq 0$ be fixed and let $u_p^2$ be a  least energy  nodal radial solution to \eqref{H} corresponding to $\a$. When $p\to \infty$ we have
\begin{align} \label{eq:massimi-pcritico}
&  \mu_{1,p}  \to   \frac {\sqrt e}{\bar t}e^{\frac {\bar t}{2(\bar t +\sqrt e)}}
\approx 2.46  , \qquad       \mu_{2,p}\to e^{\frac {\bar t}{2(\bar t +\sqrt e)}}
\approx 1.17,  \ & \rho_{p}^i\to 0  \ \text{ as } i=1,2
\intertext{ where $\bar t\approx 0.7875$ is the unique root of the equation $2\sqrt e \log t+t=0$, and }
\label{eq:convergenza-riscalata}
& \widetilde u_{1,p} (x)  \to U_{\a; \d(\a)} (x) = \log \frac {4(2+\a)^4} { (2(2+\a)^2+|x|^{2+\a})^2}   &  \text{ in } C^1_{\mathrm{loc}}(\R^2) . 
\end{align}
Moreover
\begin{align}
\label{zeri-pcritico} 	r_{p}  \to 0 , 	\qquad  \frac{r_p}{\rho_p^1 }  \to \infty , \qquad &  \frac{r_p}{\rho_p^2 }  \to 0 , \\
\label{ell-alpha} \sigma_{p} \to 0  , \qquad  \phantom{ \frac{r_p}{\rho_p^1 }  \to \infty , \qquad } &  \frac{\sigma_p}{\rho_p^2}  \to  \left(\frac{2}{2+\a} \ell\right)^{\frac2{2+\a}} 
\end{align}
where $\ell$ is a fixed number, $\ell\simeq 7.1979$.
Starting from $\ell$ we define 
\begin{equation}\label{g-d-ell}
\g: = \sqrt{4+2\ell^2} - 2 \simeq 8.3740, \qquad \d_{2}(\a):= \frac{\gamma+4}{\gamma} \left(\frac{2+\a}{2}\ell\right)^{2+\g} .
\end{equation}
Eventually
\begin{align}	\label{soluzione-pcritico}
\widetilde u_{2,p} (x) & \to  Z_{\a,\g;\d_{2}(\a)}(x)& \\ \nonumber 
& \quad =  \log \frac{\frac{1}{2}(2+\a)^2(2+\g)^2\d_{2}(\a) |x|^{\frac{2+\a}{2}\g}}{\left(\d_{2}(\a)+ |x|^{\frac{(2+\a)(2+\g)}{2}}\right)^2}&  \text{ in } C^1_{\mathrm{loc}}(\R^2\setminus\{0\}) .
\end{align}
\end{theorem}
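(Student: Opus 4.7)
My starting point is the one-dimensional change of variable that links the radial H\'enon and Lane--Emden equations. Setting $s=r^{(2+\a)/2}$ and $v(s):=u(r)$, a radial $u$ solves \eqref{H} in the unit disc if and only if $v$ solves $-v''-v'/s=\bigl(\tfrac{2}{2+\a}\bigr)^2|v|^{p-1}v$ in $(0,1)$ with Dirichlet data; absorbing the prefactor via $w(s):=\bigl(\tfrac{2}{2+\a}\bigr)^{2/(p-1)}v(s)$ then yields a radial solution of the pure Lane--Emden problem \eqref{LE}. This correspondence is bijective and preserves nodal structure, so $u_p^2$ is mapped to the least energy nodal radial solution $w_p$ of \eqref{LE}. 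In particular the zeros and critical points of $u_p^2$ are $2/(2+\a)$-th powers of those of $w_p$, while the extremal values are related by $|u_p^2(\cdot)|=\bigl(\tfrac{2+\a}{2}\bigr)^{2/(p-1)}|w_p(\cdot)|$.

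\textbf{Importing the Lane--Emden asymptotics.} I would then read off from \cite{GGP14} the analogue, for $w_p$, of every quantity in the statement: the maxima $|w_p(0)|$ and $|w_p(\tilde\s_p^w)|$ (where $\tilde\s_p^w$ is the second critical point of $w_p$), the unique interior zero $s_p^w$, the scaling parameters $\e_p^{i,w}:=(p|w_p(\cdot)|^{p-1})^{-1/2}$, and the two rescalings $\widetilde w_{i,p}$. The results of \cite{GGP14} give the limits $|w_p(0)|\to\tfrac{\sqrt e}{\bar t}e^{\bar t/(2(\bar t+\sqrt e))}$ and $|w_p(\tilde\s_p^w)|\to e^{\bar t/(2(\bar t+\sqrt e))}$, with $\bar t$ intrinsic to the limit ODE and independent of $\a$, the ratios $s_p^w/\e_p^{1,w}\to\infty$, $s_p^w/\e_p^{2,w}\to 0$, $\tilde\s_p^w/\e_p^{2,w}\to\ell$, together with the $C^1_{\loc}$-convergences $\widetilde w_{1,p}\to U_{0;\d(0)}$ in $\R^2$ and $\widetilde w_{2,p}\to Z_{0,\g;\d_2(0)}$ in $\R^2\setminus\{0\}$ with $\g$ and $\d_2(0)$ as in \eqref{g-d-ell}.

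\textbf{Pulling back.} Since $\bigl(\tfrac{2+\a}{2}\bigr)^{2/(p-1)}\to 1$, the extremal values of $u_p^2$ and $w_p$ share the same limits, giving \eqref{eq:massimi-pcritico}. A short computation based on $p|u_p^2(\cdot)|^{p-1}=\bigl(\tfrac{2+\a}{2}\bigr)^2 p|w_p(\cdot)|^{p-1}$ shows that $(\rho_p^i)^{(2+\a)/2}=\tfrac{2}{2+\a}\e_p^{i,w}$ exactly; substituting $r_p=(s_p^w)^{2/(2+\a)}$ and $\s_p=(\tilde\s_p^w)^{2/(2+\a)}$ and taking ratios then yields \eqref{zeri-pcritico} and \eqref{ell-alpha}. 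At the level of rescaled functions the prefactor $\bigl(\tfrac{2+\a}{2}\bigr)^{2/(p-1)}$ cancels upon dividing by $u_p^2(0)$ (resp.\ $u_p^2(\s_p)$), so a direct calculation gives the exact identity
\[\widetilde u_{i,p}(x)=\widetilde w_{i,p}\!\bigl(\tfrac{2}{2+\a}\,|x|^{(2+\a)/2}\bigr),\qquad i=1,2.\]
Passing to the $C^1_{\loc}$ limit and verifying the algebraic identities
\[U_{0;\d(0)}\!\bigl(\tfrac{2}{2+\a}|x|^{(2+\a)/2}\bigr)=U_{\a;\d(\a)}(x),\qquad Z_{0,\g;\d_2(0)}\!\bigl(\tfrac{2}{2+\a}|x|^{(2+\a)/2}\bigr)=Z_{\a,\g;\d_2(\a)}(x),\]
produces \eqref{eq:convergenza-riscalata} and \eqref{soluzione-pcritico}. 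Both identities reduce to matching the arguments of the two logarithms, and the second one is what pins down the explicit formula for $\d_2(\a)$ in \eqref{g-d-ell}.

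\textbf{Expected obstacle.} The conceptual content of the theorem is inherited from \cite{GGP14}; the only real work is quantitative bookkeeping in the third step. The delicate point is that even though the prefactor $\bigl(\tfrac{2+\a}{2}\bigr)^{2/(p-1)}$ is asymptotically $1$, it still enters $\rho_p^i$ through $|u_p^2(\cdot)|^{p-1}$, and an imprecise expansion at order $1/p$ would shift the scaling parameters and hence the limit profiles. Once the exact identity $(\rho_p^i)^{(2+\a)/2}=\tfrac{2}{2+\a}\e_p^{i,w}$ is verified, the H\'enon asymptotics follow from the Lane--Emden one by the substitution $s=r^{(2+\a)/2}$, and the constants $\g$ (independent of $\a$) and $\d_2(\a)$ displayed in \eqref{g-d-ell}, together with the scaling factor in \eqref{ell-alpha}, come out of the corresponding change of variable on $U$ and $Z$.
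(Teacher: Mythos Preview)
Your proposal is correct and follows essentially the same approach as the paper: both reduce to the Lane--Emden problem via the transformation $t=r^{(2+\a)/2}$, $v=\left(\tfrac{2}{2+\a}\right)^{2/(p-1)}u$, import the asymptotics for the Lane--Emden nodal solution from \cite{GGP14}, and pull back through the exact identities $\rho_p^i=\left(\tfrac{2}{2+\a}\,\e_p^i\right)^{2/(2+\a)}$ and $\widetilde u_{i,p}(r)=\widetilde v_{i,p}\!\left(\tfrac{2}{2+\a}r^{(2+\a)/2}\right)$. The algebraic identifications of the limit profiles are exactly those recorded in the paper as Remark~\ref{relazioni-limite}.
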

 This Theorem extends already known results for the Lane Emden equation to the H\'enon problem. Surprisingly the limit values of the maximum and the minimum of the radial solutions $u_p^i$ are not affected at all by the term $|x|^\a$ and they are exactly the same of the Lane Emden case which have been characterized in \cite{AdG} and \cite{GGP14}. The dependence on the parameter $\a$ appears instead in the limit of the two rescaling $U_{\a,\d(\a)}$ and $Z_{\a,\gamma:\d_2(\a)}$.

\

Section \ref{sec:3} is devoted to the computation of the  Morse index of $u_p^1$ and $u_p^2$ for large values of $p$. By taking advantage of the asymptotic study in Section \ref{sec:2} and on the characterization of the Morse index given in \cite{AG-sez2} we prove the following results
\begin{theorem}\label{teo-morse-1}
	Let $\a\geq 0$ be fixed and let $u^1_p$ be a least energy radial solution to \eqref{H} corresponding to $\a$. Then there exists $p^\star=p^\star(\a)>1$ such that for any $p>p^\star$ we have
	\begin{equation}\label{eq:morse-index-1} 
	m(u_p^1)  = 1+  2\left\lceil\frac{\alpha}{2}\right\rceil
	\end{equation}
\end{theorem}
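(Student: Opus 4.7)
The plan is to combine the spherical--harmonic decomposition of the Morse index developed in \cite{AG-sez2} with an explicit analysis of the rescaled limit problem furnished by Theorem \ref{teo:asymptotic-of-u-1}. Since $u_p^1$ is radial, the linearized operator commutes with rotations and its eigenfunctions split as $\phi(r)\cos(k\theta)$ or $\phi(r)\sin(k\theta)$ for $k\geq 0$. Letting $n_k(p)$ denote the number of negative singular eigenvalues (in the sense of \cite{AG-sez2}) of
\[
\mathcal{L}_{k,p} \phi = -\phi'' - \frac{\phi'}{r} + \frac{k^2}{r^2} \phi - p\, r^{\a} |u_p^1|^{p-1} \phi \quad \text{on } (0,1),
\]
the characterization of the Morse index in \cite{AG-sez2} yields $m(u_p^1) = n_0(p) + 2 \sum_{k \geq 1} n_k(p)$. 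Because $u_p^1$ is the least energy radial solution, $n_0(p) = 1$, and the task reduces to counting $n_k(p)$ for $k\geq 1$ when $p$ is large.

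The next step is to rescale via the parameter $\rho_p$ of Theorem \ref{teo:asymptotic-of-u-1}: setting $\widetilde\phi(r) = \phi(\rho_p r)$, the potential $p \rho_p^{2+\a} r^\a |u_p^1(\rho_p r)|^{p-1}$ converges in $C^1_{\loc}(\R^2)$ to
\[
V_\a(r) := r^\a e^{U_{\a,\d(\a)}(r)} = \frac{4(2+\a)^4\, r^\a}{\left(2(2+\a)^2 + r^{2+\a}\right)^2}.
\]
Consequently $\mathcal{L}_{k,p}$ converges to $\mathcal{L}_k^\infty \phi = -\phi'' - \phi'/r + k^2\phi/r^2 - V_\a(r)\phi$ on $(0,\infty)$, and the singular--eigenvalue framework of \cite{AG-sez2} guarantees that $n_k(p) = n_k^\infty$ for all $p$ sufficiently large, where $n_k^\infty$ is the number of negative eigenvalues of $\mathcal{L}_k^\infty$.

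The heart of the argument is then the explicit evaluation of $n_k^\infty$. The plan is to look for a zero--energy solution in mode $k$ of the form
\[
f_k(r) = r^k\, \frac{A_k - r^{2+\a}}{\d(\a) + r^{2+\a}};
\]
substituting into $\mathcal{L}_k^\infty f_k = 0$ and identifying coefficients forces the choice $A_k = \d(\a)\,\frac{2+\a+2k}{2+\a-2k}$ whenever $2k \neq 2+\a$, and moreover pins down $\d = 2(2+\a)^2 = \d(\a)$ as a consistency condition (for $k=0$ this recovers $\partial_\d U_{\a,\d}\bigl|_{\d=\d(\a)}$ up to normalization, consistent with $n_0(p)=1$). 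The Sturm oscillation theorem, in the version adapted to the singular eigenvalues of $\mathcal{L}_k^\infty$, identifies $n_k^\infty$ with the number of zeros of $f_k$ in $(0,\infty)$: if $A_k>0$, i.e.\ $k<(2+\a)/2$, then $f_k$ vanishes exactly once at $r=A_k^{1/(2+\a)}$ and $n_k^\infty=1$; if $A_k<0$, i.e.\ $k>(2+\a)/2$, then $f_k$ has constant sign and $n_k^\infty=0$. In the exceptional case $2k = 2+\a$ (which only occurs when $\a$ is an even integer) one verifies directly that $g(r) = r^k/(\d(\a) + r^{2+\a})$ solves $\mathcal{L}_k^\infty g = 0$; being strictly positive, it forces $n_k^\infty = 0$ in this case as well. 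Counting the integers $k \geq 1$ satisfying $k < 1+\a/2$ gives exactly $\lceil \a/2 \rceil$, and combined with $n_0(p) = 1$ this yields \eqref{eq:morse-index-1}.

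The main obstacle is not the algebra of the last step but the rigorous passage $n_k(p) \to n_k^\infty$: one has to exclude the possibility that an eigenvalue of $\mathcal{L}_{k,p}$ crosses zero as $p \to \infty$, both near the boundary $r=1$ of the disc (which recedes to infinity under the rescaling) and near the weighted singularity at $r=0$. The weighted nature of the operator and the necessity of prescribing admissible growth of order $r^{\pm k}$ at the origin make a naive perturbative argument insufficient; fortunately, the theory of singular eigenvalues developed in \cite{AG-sez2} is designed precisely to make this convergence quantitative for the class of problems at hand.
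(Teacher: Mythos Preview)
Your route is genuinely different from the paper's and the explicit zero-energy solutions $f_k$ are a nice device; the node count you obtain is correct and matches the final answer.  The paper instead transforms to the Lane--Emden problem via $t=r^{(2+\a)/2}$, so that the limit potential is the \emph{unweighted} $W^1(t)=64/(8+t^2)^2$ and all information about the $\a$-dependence is encoded in a single radial singular eigenvalue $\nu_1(p)$ satisfying $\widehat\Lambda_1^{\rad}(p)=\frac{(2+\a)^2}{4}\nu_1(p)$.  The entire proof then reduces to showing $\nu_1(p)\to -1^-$: the upper bound $\limsup\nu_1(p)\le -1$ comes from plugging a truncated version of $\eta_1(t)=4t/(8+t^2)$ into the Rayleigh quotient \eqref{nu-var-1}, while the lower bound $\nu_1(p)>-1$ for every $p$ is the general estimate \eqref{stima-primo-autov} from \cite{AG-sez2}.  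Left-continuity of $\lceil\,\cdot\,\rceil$ then gives \eqref{eq:morse-index-1}.

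There is a real gap in your argument at exactly the point you flag.  The claim that \cite{AG-sez2} ``makes this convergence quantitative'' is not accurate: that paper provides the singular-eigenvalue framework and the a~priori bound $-1<\nu_1(p)<0$, but it does \emph{not} establish that $n_k(p)\to n_k^\infty$; proving the relevant eigenvalue limit is precisely the work done here in Proposition~\ref{prop:primo-autovalore-1}.  More seriously, when $\a$ is an even integer and $k=1+\a/2$, your own computation shows the limiting zero-energy state $g(r)=r^k/(\d(\a)+r^{2k})$ is positive, so the first eigenvalue of $\mathcal L_k^\infty$ is exactly zero.  Eigenvalue convergence alone cannot then decide whether the first eigenvalue of $\mathcal L_{k,p}$ sits below or above zero for large $p$; you need an additional strict inequality for every $p$.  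In the paper's language this is exactly $\nu_1(p)>-1$, i.e.\ \eqref{stima-primo-autov}, and without invoking it (or an equivalent Hardy-type bound for $\mathcal L_{k,p}$) your mode-by-mode count cannot be closed in the borderline case.
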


\begin{theorem}\label{teo-morse-2}
	Let $\a\geq 0$ be fixed and  let $u^2_p$ be a least energy nodal radial solution to \eqref{H} corresponding to $\a$. Starting from the number $\ell$ determined by \eqref{ell-alpha} we set 
	\begin{align}\label{kappa}
	\kappa &= \sqrt{\frac{2+\ell^2}{2}} = \frac{2+\gamma}{2} \approx 5.1869 .
	\end{align}	
 For all $\alpha\geq 0$ there exists $p^\star_{2}=p^\star_{2}(\a)>1$ such that for any $p>p^\star_{2}$ we have
	\begin{equation}\label{eq:morse-index-2}
	m(u_p^2)  =2\left\lceil\frac {2+\a}2\kappa\right\rceil +  2\left\lceil\frac{\alpha}{2}\right\rceil 
	\end{equation}
when $\a\neq \a_n= 2(\frac{n}{\kappa}-1)$, while when $\a=\a_n$ it holds
\begin{equation}\label{eq:morse-index-2-an}
	(2+\a) \kappa  +  2\left\lceil\frac{\alpha}{2}\right\rceil\le m(u_p^2)\le  (2+\a) \kappa+  2\left\lceil\frac{\alpha}{2}\right\rceil+ 2
	\end{equation}
	\end{theorem}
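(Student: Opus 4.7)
The plan is to combine a spherical-harmonic decomposition of $Q_{u_p^2}$ with the singular eigenvalue characterization of the Morse index developed in \cite{AG-sez2}, and then pass to the limit $p\to\infty$ using the two-scale asymptotic profile established in Theorem \ref{teo:asymptotic-of-u}. First, I would decompose a generic test function $\psi \in H^1_0(B)$ in Fourier series along the angular variable: this yields $m(u_p^2) = N_0(p) + 2\sum_{k\geq 1} N_k(p)$, where $N_k(p)$ counts the negative eigenvalues of the one-dimensional weighted Sturm--Liouville problem on $(0,1)$ with potential $p\, r^\alpha |u_p^2|^{p-1}$ and centrifugal term $k^2/r^2$. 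Since $u_p^2$ has radial Morse index $2$, we know $N_0(p)=2$, and the characterization of \cite{AG-sez2} tells us there are exactly two negative singular eigenvalues $\hat\mu_1(p) \leq \hat\mu_2(p) < 0$ of
\begin{equation*}
-\psi'' - \tfrac{1}{r}\psi' - p\, r^\alpha |u_p^2(r)|^{p-1}\psi = \tfrac{\mu}{r^2}\psi, \qquad r\in(0,1),
\end{equation*}
such that $N_k(p) = \#\{j\in\{1,2\}:\, \hat\mu_j(p) < -k^2\}$. Consequently
\begin{equation*}
m(u_p^2) = \sum_{j=1,2}\Bigl(1 + 2\,\lfloor\sqrt{-\hat\mu_j(p)}\rfloor\Bigr)
\end{equation*}
provided neither square root is an integer.

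Next, I would identify the limits of $\hat\mu_1(p)$ and $\hat\mu_2(p)$ using the two concentration scales of $u_p^2$. Rescaling by $\rho_p^1$ and invoking the convergence $\widetilde u_{1,p}\to U_{\alpha;\delta(\alpha)}$ from \eqref{eq:convergenza-riscalata}, one eigenvalue family converges to the radial singular eigenvalue problem
\begin{equation*}
-\Delta\Phi - |y|^\alpha e^{U_{\alpha;\delta(\alpha)}(y)}\Phi = \tfrac{\mu}{|y|^2}\Phi \qquad \text{on } \mathbb{R}^2,
\end{equation*}
whose unique negative singular eigenvalue is $\mu = -\bigl(\tfrac{2+\alpha}{2}\bigr)^2$, as follows from the standard Liouville kernel analysis after the weighted change of variable $s=|y|^{2+\alpha}$. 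Rescaling by $\rho_p^2$ and using \eqref{soluzione-pcritico}, the second eigenvalue converges to the analogous problem with weighted Liouville solution $Z_{\alpha,\gamma;\delta_2(\alpha)}$; since $Z_{\alpha,\gamma;\delta_2(\alpha)}$ is precisely $U_{\widetilde\alpha;\widetilde\delta}$ with $\widetilde\alpha = \alpha + \tfrac{2+\alpha}{2}\gamma$ modulo the logarithmic correction, the same Liouville computation gives the unique negative singular eigenvalue $\mu = -\bigl(\tfrac{(2+\alpha)(2+\gamma)}{4}\bigr)^2 = -\bigl(\tfrac{2+\alpha}{2}\kappa\bigr)^2$ with $\kappa$ as in \eqref{kappa}. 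Hence $\sqrt{-\hat\mu_1(p)} \to \tfrac{(2+\alpha)\kappa}{2}$ and $\sqrt{-\hat\mu_2(p)} \to \tfrac{2+\alpha}{2}$.

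Finally, I would assemble the count. When $\alpha\neq \alpha_n$, the limit value $\tfrac{(2+\alpha)\kappa}{2}$ is not an integer, and neither is $\tfrac{2+\alpha}{2}$ (since $\lceil \tfrac{2+\alpha}{2}\rceil = 1+\lceil \tfrac{\alpha}{2}\rceil$ in all cases). For $p$ large enough both $\sqrt{-\hat\mu_j(p)}$ lie in the same integer interval as their limits, so
\begin{equation*}
m(u_p^2) = \bigl(2\lceil \tfrac{(2+\alpha)\kappa}{2}\rceil-1\bigr) + \bigl(2\lceil \tfrac{2+\alpha}{2}\rceil - 1\bigr) = 2\lceil \tfrac{(2+\alpha)\kappa}{2}\rceil + 2\lceil\tfrac{\alpha}{2}\rceil,
\end{equation*}
which is \eqref{eq:morse-index-2}. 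In the degenerate case $\alpha = \alpha_n$ the limit value $\tfrac{(2+\alpha_n)\kappa}{2} = n$ is an integer, so $\sqrt{-\hat\mu_1(p)}$ may sit on either side of $n$, producing a two-unit uncertainty and yielding \eqref{eq:morse-index-2-an}. The main obstacle I expect is the rigorous justification of the limit of singular eigenvalues across the two different concentration scales: one must localize each of the two negative singular eigenfunctions to the correct scale $\rho_p^i$, rule out spurious eigenvalues arising from the intermediate region (where $u_p^2$ is small but nonzero), and verify that the limit eigenvalue problems on $\mathbb{R}^2$ capture all negative singular modes. This is precisely the type of two-scale spectral convergence for which the machinery of \cite{AG-sez2} is tailored, and I would rely on its Courant--Fischer type formulation together with the explicit $C^1_{\mathrm{loc}}$ convergences \eqref{eq:convergenza-riscalata}--\eqref{soluzione-pcritico}.
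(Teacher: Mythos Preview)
Your overall strategy---Fourier decomposition in the angle, reduction to two negative singular radial eigenvalues, identification of their limits via the two concentration scales, and then assembling the count through the ceiling function---is exactly the approach the paper takes. The paper streamlines the computation by first passing through the transformation \eqref{transformation-henon-no-c} to the Lane--Emden problem, so that the relevant singular eigenvalues are $\nu_1(p),\nu_2(p)$ with $\widehat\Lambda_j^{\rad}(p)=\bigl(\tfrac{2+\a}{2}\bigr)^2\nu_j(p)$ (your $\hat\mu_j(p)$), but this is only a change of variable and your limits $\sqrt{-\hat\mu_1(p)}\to\tfrac{2+\a}{2}\kappa$ and $\sqrt{-\hat\mu_2(p)}\to\tfrac{2+\a}{2}$ agree with the paper's $\nu_1(p)\to-\kappa^2$, $\nu_2(p)\to-1$.

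There is, however, a genuine gap in your final assembly step. You write that ``neither is $\tfrac{2+\a}{2}$'' an integer, justifying this by the identity $\lceil\tfrac{2+\a}{2}\rceil=1+\lceil\tfrac{\a}{2}\rceil$. That identity is always true, but it says nothing about integrality: $\tfrac{2+\a}{2}=1+\tfrac{\a}{2}$ \emph{is} an integer whenever $\a$ is an even integer. For such $\a$ your argument ``both $\sqrt{-\hat\mu_j(p)}$ lie in the same integer interval as their limits'' breaks down for $j=2$, and the count would acquire a spurious two-unit ambiguity at every even $\a$, not only at the $\a_n$. The paper closes this gap by invoking the a~priori bound \eqref{stima-autov}, namely $-1<\nu_2(p)<0$ for every $p$, which forces $\sqrt{-\hat\mu_2(p)}<\tfrac{2+\a}{2}$. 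The convergence to $\tfrac{2+\a}{2}$ is therefore \emph{from below}, and since the ceiling function is left-continuous one gets $\lceil\sqrt{-\hat\mu_2(p)}\rceil\to\lceil\tfrac{2+\a}{2}\rceil=1+\lceil\tfrac{\a}{2}\rceil$ unconditionally. No analogous one-sided bound is available for $\nu_1(p)$, which is why the $\a_n$ exception genuinely remains. You need to insert this one-sided information (it comes from the general theory in \cite{AG-sez2}, not from the asymptotics) before the final count.
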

Here $\lceil t\rceil = \min\{ k \in {\mathbb Z} \, : \, k\ge t\}$ stands for the ceiling function.
\\

In the case $\alpha=0$, Theorem \ref{teo-morse-2} gives back the  Morse index of the Lane-Emden problem computed in \cite{DIPN=2} since $ 2\lceil\kappa\rceil + 2\lceil0\rceil =12$.\\
 Formula \eqref{eq:morse-index-1} highlights a discontinuity of the solution's set of the H\'enon problem \eqref{H} corresponding to the even values of $\a$ which is typical of the nonlinear term $|x|^\a$ and has been already observed in  several papers among which we can quote \cite{PT}, \cite{GGN}, \cite{AG-N3}  as an example. In particular in \eqref{eq:morse-index-1} $1$ is the amount of the radial Morse index of $u_p^1$ while $2\left\lceil\frac{\alpha}{2}\right\rceil$ is the contribution of the non radial Morse index, due to the term $|x|^{\a}$, and comes from the asymptotic profile in \eqref{eq:convergenza-riscalata-1}.
 \\
Formula \eqref{eq:morse-index-2} instead exhibits two discontinuities, one corresponding to the even values of $\a$ and the other corresponding to the sequence $\a_n$ such that $\frac{2+\a}2\kappa$ is an integer. In order to analyze them we rewrite the Morse index  as
\[m(u_p^2)  =2+  2\left\lceil\frac{\alpha}{2}\right\rceil +2\left\lceil\frac {2+\a}2\kappa-1\right\rceil  \]
and we observe that $2$ is the radial contribution to the Morse index of $u_p^2$, while $2\left\lceil\frac{\alpha}{2}\right\rceil$ is the (nonradial) contribution of the rescaling of $u_p^2$ in the first nodal zone which has the same limit profile as $u_p^1$. The term $2\left\lceil\frac {2+\a}2\kappa-1\right\rceil$ is instead the (nonradial) addition coming from the rescaling of $u_p^2$ in the second nodal domain, and it is the major part of the Morse index.
What happens is that the behaviour in the second nodal zone, where the solution is smaller, has a greater influence due to  the effect of the singular term in \eqref{S-H-L}, and we will see in Section \ref{sec:3} that it gives rise to a multiplicity result.
\\
The existence of this sequence $\a_n$ seems a new phenomenon which is  peculiar of dimension $2$	since it does not appear in higher dimensions where each nodal region brings the same contribution (radial and nonradial) to the total Morse index, see \cite{AG-N3}. 
It suggests that the set of solutions to \eqref{H} changes in correspondence of that values of $\a_n$, and indeed the number of distinct nonradial solutions that we produce later on in Theorem \ref{teo:existence-2} increases by one unit.

\

Finally in Section \ref{sec:4} we give some existence and multiplicity results, by minimizing 
the energy functional $\mathcal E(u)$  on some suitable spaces of invariant functions.
	To this end  for  any integer $n\geq 1$ we denote by 
{ $R_{\frac {2\pi}n}$ any rotation of angle $\frac {2\pi}n$, centered at the origin, and we let $\mathcal G_{\frac{2\pi}n}$ be the subgroup of $O(2)$ generated by $R_{\frac {2\pi}n}$. Next, we denote by $H^1_{0,n}$ the subspace of $H^1_0(B)$ given by functions which are invariant by the action of $\mathcal G_{\frac{2\pi}n}$, namely
	\[H^1_{0,n}:=\{v\in H^1_0(B) :  \ v(x)=v(g(x))  \ \text{ for any }x\in B,   \text{ for any }g\in \mathcal G_{\frac{2\pi}n}\} ,
	\]}
 and we introduce the  $n$-invariant Nehari manifolds 
	\[\mathcal N_n:=\{u\in H^1_{0,n}: \int_B|\nabla u|^2=\int_B |x|^\a|u|^{p+1}\} ,\]
	and the nodal  $n$-invariant Nehari manifold
	\[ \begin{array}{rl}\mathcal N_{n, \nod}=\Big\{v\in H^1_{0,n} :&   v^+\neq 0, \ \int_B|\nabla v^+|^2=\int_B |x|^\a |v^+|^{p+1}, \\ &  v^-\neq 0 , \ \int_B|\nabla v^-|^2=\int_B |x|^\a |v^-|^{p+1}\Big\} .\end{array} \]
	Since $H^1_{0,n}$ is compactly embedded in  $L^p(B)$ for every $p>1$, by standard methods (see, for instance, \cite{BW}, \cite{BWW} or \cite{SerraTilli})  it follows that $\min_{u\in \mathcal N_n}\mathcal E(u)$ and  $\min_{u\in \mathcal N_{n,\nod}}\mathcal E(u)$ are nonnegative and  attained at two nontrivial functions, that we denote respectively by $u_{p,n}^1$ and $u^2_{p,n}$. They are weak and also classical solutions to \eqref{H};  $ u_{p,n}^1$ is  positive in $B$ and is  a  least energy solution in $H^1_{0,n}$, while $u^2_{p,n}$  changes sign and is  a least energy nodal solution in $H^1_{0,n}$. 
	Furthermore their {\it $n$-Morse index}, i.e. the number of negative eigenvalues of the linearized operator $L_u$	which have corresponding eigenfunction in $H^1_{0,n}$, is given by $m_n(u_{p,n}^1)=1$ and $m_n(u_{p,n}^2)=2$, because they are minima on manifolds of codimension 1 and 2, respectively. 
	Comparing the $n$-Morse index of $u^i_{p,n}$ with the $n$-Morse index of the radial solution $u_p^i$ and using a strict monotonicity result in the angular variable of \cite{Gladiali-19}, we are able to prove

	\begin{theorem}\label{teo:existence-1}
		Let $\a >0$ be fixed. Then, there exists an exponent $p^\star=p^\star(\a)$ such that problem \eqref{H} admits at least $\lceil \frac \a2\rceil  $ distinct positive nonradial solutions	for every $p> p^\star(\a)$.
	\end{theorem}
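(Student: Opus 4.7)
The plan is to produce, for each $n\in\{1,2,\ldots,\lceil\alpha/2\rceil\}$, a positive $\mathcal{G}_{2\pi/n}$-invariant solution $u^1_{p,n}$ via constrained minimization on the $n$-invariant Nehari manifold $\mathcal{N}_n$ (which exists by the compactness already recalled in the excerpt and has $n$-Morse index equal to $1$), and then to show that these $\lceil\alpha/2\rceil$ functions are pairwise distinct and all nonradial for $p>p^\star(\alpha)$.

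\textbf{Step 1 (nonradiality).} Assume by contradiction that $u^1_{p,n}$ is radial. Since its radial Morse index is bounded above by its $n$-Morse index and hence equals $1$, the uniqueness of the positive radial solution of radial Morse index one forces $u^1_{p,n}=u^1_p$. I would then derive a contradiction by bounding $m_n(u^1_p)$ from below. Writing an eigenfunction of $L_{u^1_p}$ in separated form $R(r)\cos(k\theta)$ or $R(r)\sin(k\theta)$, the eigenvalue problem splits into a family of radial problems indexed by $k\ge 0$; the Fourier-mode analysis that underlies the proof of Theorem \ref{teo-morse-1} (via the singular-eigenvalue characterization of \cite{AG-sez2} and the asymptotic profile \eqref{eq:convergenza-riscalata-1}) identifies the identity $m(u^1_p)=1+2\lceil\alpha/2\rceil$ as the count of one negative eigenvalue at $k=0$, plus one negative eigenvalue for each $k=1,\ldots,\lceil\alpha/2\rceil$ counted twice because of the $\sin/\cos$ pairing, and none at higher $k$. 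Since a $\mathcal{G}_{2\pi/n}$-invariant eigenfunction must have angular index $k\in n\mathbb{Z}$, we obtain
\[ m_n(u^1_p)\;=\;1+2\,\#\bigl\{k\in\{1,\ldots,\lceil\alpha/2\rceil\}:n\mid k\bigr\}\;=\;1+2\left\lfloor\frac{\lceil\alpha/2\rceil}{n}\right\rfloor\;\ge\;3 \]
for every $1\le n\le\lceil\alpha/2\rceil$, contradicting $m_n(u^1_{p,n})=1$.

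\textbf{Step 2 (distinctness).} To separate the solutions for different $n$, I would invoke the strict angular monotonicity from \cite{Gladiali-19}: any positive $\mathcal{G}_{2\pi/n}$-invariant least-energy solution produced by minimization on $\mathcal{N}_n$ is, after a suitable rotation, foliated Schwarz symmetric with respect to a diameter of $B$ and strictly monotone in the polar angle on the half-sector $(0,\pi/n)$. Its minimal angular period is therefore exactly $2\pi/n$, so an equality $u^1_{p,n}=u^1_{p,m}$ with $n\neq m$ would force the common function to have two different minimal periods, which is impossible. This yields $\lceil\alpha/2\rceil$ pairwise distinct positive nonradial solutions.

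\textbf{Main obstacle.} The substantive point is the refined decomposition used in Step 1: the bare identity $m(u^1_p)=1+2\lceil\alpha/2\rceil$ does not by itself say how many Fourier modes $k$ contribute, nor how many negative eigenvalues each provides. Establishing that every $k=1,\ldots,\lceil\alpha/2\rceil$ contributes exactly one negative (singular) eigenvalue and that no mode $k\ge\lceil\alpha/2\rceil+1$ contributes requires the mode-by-mode spectral comparison with the limit Liouville operator on $\mathbb{R}^2$ attached to \eqref{eq:convergenza-riscalata-1}, which is precisely what is performed in the proof of Theorem \ref{teo-morse-1}. Once that refined count is in hand, the divisibility observation above delivers the theorem in a single line.
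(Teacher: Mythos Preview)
Your proposal is correct and follows essentially the same route as the paper: construct the $n$-invariant least-energy solutions $u^1_{p,n}$ on $\mathcal N_n$, rule out radiality by comparing $m_n(u^1_{p,n})=1$ with the formula $m_n(u^1_p)=1+2\lfloor \lceil\alpha/2\rceil/n\rfloor\ge 3$ for $n\le\lceil\alpha/2\rceil$ (which is exactly the paper's Corollary~\ref{cor:morse-sim}, obtained from the singular-eigenvalue decomposition \eqref{decomposition}--\eqref{eq:autofunzioni} and Proposition~\ref{prop:primo-autovalore-1}), and then distinguish different $n$'s via the strict angular monotonicity of Proposition~\ref{prop-gladiali}. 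The only cosmetic difference is that the paper directly compares $m_n(u^1_p)$ with $m_n(u^1_{p,n})$ rather than first assuming radiality and passing through the uniqueness of the positive radial solution, but the substance is identical.
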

	The exponent $p^\star$ here is the same of Theorem \ref{teo-morse-1} and the nonradial positive solutions we found are invariant up to a rotation of an angle $2\pi/n$  for $n=1,\dots  \lceil \frac \a2\rceil  $, respectively. Let us remark explicitly that the first one is the least energy solution.\\
	Coming to nodal solutions, we shall prove that 
	\begin{theorem}\label{teo:existence-2}
		Let $\a\geq 0$ be fixed. Then, there exists an exponent $p_2^*= p_2^*(\a)$ such that problem \eqref{H} admits at least $\lceil \frac {2+\a}2 \kappa-1\rceil $ distinct nodal nonradial solutions for every $p >p_2^*(\a)$.
	\end{theorem}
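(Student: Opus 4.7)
The plan is to produce, for each integer $n$ in the range $1\leq n\leq N$ with $N:=\lceil \frac{2+\a}{2}\kappa-1\rceil$, a distinct nonradial nodal solution as the minimizer $u_{p,n}^2$ of $\mathcal E$ over $\mathcal N_{n,\nod}$, and then to distinguish these $N$ minimizers from each other and from the radial nodal solution $u_p^2$. Each $u_{p,n}^2$ is available by the compact embedding of $H^1_{0,n}$ into $L^{p+1}(B)$; it solves \eqref{H}, changes sign with exactly two nodal domains by \cite{BW}, and attains $n$-Morse index equal to $2$ because it realises a constrained minimum on a manifold of codimension $2$ inside $H^1_{0,n}$.

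The first main step is to show that $u_{p,n}^2$ cannot be radial for $p$ large. For this I would compare the $n$-Morse index of $u_{p,n}^2$ (which equals $2$) with the $n$-Morse index of the radial solution $u_p^2$. Eigenfunctions of $L_{u_p^2}$ split by angular mode, taking the form $\psi(r)\cos(j\theta)$ or $\psi(r)\sin(j\theta)$, and such an eigenfunction lies in $H^1_{0,n}$ precisely when $n$ divides $j$. Unpacking the proof of Theorem \ref{teo-morse-2} one obtains a mode-by-mode refinement of \eqref{eq:morse-index-2}: for $p$ large, each angular mode $j\in\{1,\dots,N\}$ carries at least one negative singular eigenvalue of $L_{u_p^2}$, originating from the concentration at $\sigma_p$ in the second nodal zone and the limit profile $Z_{\a,\g;\d_{2}(\a)}$ in \eqref{soluzione-pcritico}. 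Consequently, for every $1\le n\le N$ one has $m_n(u_p^2)\ge 2+2=4>2=m_n(u_{p,n}^2)$, so $u_{p,n}^2\neq u_p^2$ and in particular $u_{p,n}^2$ is not radial.

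The second main step is to show that $u_{p,n}^2$ and $u_{p,m}^2$ are genuinely different solutions whenever $1\leq n<m\leq N$, i.e.\ not related by any element of $O(2)$. By the strict angular monotonicity proved in \cite{Gladiali-19}, together with the foliated Schwarz symmetry of least energy nodal solutions in $H^1_{0,n}$ from \cite{BWW}, the minimizer $u_{p,n}^2$ is, after a suitable rotation, axially symmetric with respect to a diameter and strictly monotone in the polar angle $\theta$ on each half-period of length $\pi/n$. If $u_{p,n}^2$ coincided, up to rotation, with $u_{p,m}^2$, it would be invariant under both $\mathcal G_{2\pi/n}$ and a conjugate of $\mathcal G_{2\pi/m}$; since $O(2)$-conjugation preserves cyclic rotation subgroups, it would be invariant under $\mathcal G_{2\pi/\mathrm{lcm}(n,m)}$. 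The inequality $n<m$ forces $\mathrm{lcm}(n,m)\ge 2n$, so the function would have an angular period at most $\pi/n$, contradicting strict monotonicity on an interval of length $\pi/n$.

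Choosing $p_{2}^{*}(\a)$ as the maximum of $p_{2}^{\star}(\a)$ from Theorem \ref{teo-morse-2} and the thresholds emerging from the two previous steps, the $N$ functions $u_{p,1}^2,\dots,u_{p,N}^2$ furnish the claimed nonradial nodal solutions. The hardest ingredient will be the mode-by-mode refinement used in the first step: Theorem \ref{teo-morse-2} records only the sum of the contributions from all angular modes, whereas here one needs the stronger statement that every single mode $j$ with $1\le j\le N$ is actually active. This is expected to follow from the singular eigenvalue characterization of \cite{AG-sez2} applied to the limit profile $Z_{\a,\g;\d_{2}(\a)}$, in the same spirit as the analysis already developed in Section \ref{sec:3}.
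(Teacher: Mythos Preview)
Your approach is essentially the paper's: minimize on $\mathcal N_{n,\nod}$, show $u_{p,n}^2$ is nonradial by comparing $n$-Morse indices with the radial $u_p^2$, then separate the solutions via the strict angular monotonicity of \cite{Gladiali-19}. Two remarks clean it up.

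First, the ``mode-by-mode refinement'' you flag as the hardest ingredient is not an additional difficulty: it is exactly the content of the decomposition \eqref{decomposition}--\eqref{eq:autofunzioni} together with formula \eqref{eq:Morse-simmetrico} (Corollary \ref{cor:morse-sim}), already established in \cite{AG-sez2} and Section \ref{sec:3}. Since a singular eigenfunction lies in $H^1_{0,n}$ precisely when its angular frequency is a multiple of $n$, one reads off directly
\[
m_n(u_p^2)\ \ge\ 2+2\Big[\tfrac{1}{n}\big\lceil\tfrac{2+\a}{2}\kappa-1\big\rceil\Big]\ \ge\ 4\qquad\text{for }n\le N,\ p>p_2^\star,
\]
with no further analysis of the limit profile required. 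Theorem \ref{teo-morse-2} already contains this information, not merely the total.

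Second, in the separation step you need neither \cite{BWW} (which concerns minimizers in $H^1_0$, not in $H^1_{0,n}$) nor the lcm argument. Proposition \ref{prop-gladiali} alone, applied to the nonradial $u_{p,n}^2$ with $m_n(u_{p,n}^2)\le 2$, gives strict monotonicity on a sector of amplitude exactly $\pi/n$; hence the minimal angular period of $u_{p,n}^2$ is $2\pi/n$, which immediately distinguishes different values of $n$. Note that Proposition \ref{prop-gladiali} requires $p\ge 2$ in the sign-changing case, which is why the paper takes $p_2^*=\max\{2,p_2^\star\}$.
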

	Here the number $\kappa$  is the same of Theorem \ref{teo-morse-2} and  $p_2^*=\max\{ 2, p_2^\star\}$ for $p_2^\star$ as in Theorem \ref{teo-morse-2}. The fact that $p_2^*$ has to be greater than $2$, instead of coincide with $p_2^\star$, is technical in order to distinguish  nonradial solutions and we do not believe it is necessary. The nonradial nodal solutions found are invariant up to a rotation of an angle $2\pi/n$ for $n=1,\dots  \lceil \frac {2+\a}2 \kappa-1\rceil$, respectively. Again, the first one is the least energy nodal solution.
	When $\a=0$  Theorem \ref{teo:existence-2} provides $5$ solutions, and gives back a previous multiplicity result in \cite{GI} to which these last two results are inspired.

		Nonradial solutions (both positive and sign-changing) have been produced also in \cite{EPW}, \cite{ZY}, \cite{KW} and \cite{Amadori} by different methods.
		 \cite{EPW}, \cite{ZY} rely on a finite dimensional reduction method and construct solutions (respectively positive and sign-changing) with $n$ symmetric concentration points placed along the vertex of a regular polygon. We mention also  \cite{EMP}, dealing with the Lane-Emden problem.
		 The symmetries of  the $n$-invariant least energy solutions are consistent with theirs, and it is reasonable to conjecture that our positive solutions $u_{p,n}^1$ coincide with the ones in \cite{EPW},  supported by the fact that we obtain the same number of solutions, but possibly this is not true anymore for nodal solutions. 
		 Indeed in the Lane-Emden case it is known that the zero set of solutions produced in \cite{EMP} touches the boundary, while \cite{GI} showed that  this does not happen to the solutions of type $u^2_{p,n}$, 
		  at least when $n=4,5$, and a similar result holds also when $\a>0$.
		 \cite{KW} and \cite{Amadori}, instead, prove a nonradial bifurcation respectively w.r.t.~the parameter $\a$, which arises in correspondence of even values of $\a$, and w.r.t. the parameter $p$.
		 Let us stress that the bifurcation in \cite{KW} allows to produce, for any given $p$, an infinite number of nonradial solutions  arising as $\a$ increases.
In a complementary way  the multiplicity results stated in Theorems \ref{teo:existence-1} and \ref{teo:existence-2} yield a finite number of solutions arising for any given value of $\a$ (imposing that $p$ is large). 
Some of such nodal solutions $u^2_{p,n}$ are nonradial for every values of $p>1$. This is certainly the case for $n=1$ (i.e. the least energy solution), and we conjecture the same holds until $n<\frac{2+\a}{2}\beta$, where $\beta\approx 2,\!305$ is a fixed number introduced in \cite{Amadori} and related to the computation of the Morse index of $u^2_p$ when $p$ approaches $1$. 
Conversely for $n=\left\lceil\frac{2+\a}{2}\beta\right\rceil, \dots \left\lceil \frac{2+\a}{2}\kappa-1\right\rceil$ the curve  $p\mapsto  u_{p,n}^2$ would coincide with the one of radial solutions $p\mapsto u^2_{p}$ for $p$ under a certain value $p_n$, and then it would bifurcate giving rise to the branch of nonradial solutions exhibited in \cite{Amadori}.

\section{Connections with the Lane-Emden problem and asymptotic profile}\label{sec:2}       
In order to study  radial solutions to \eqref{H} we let $r=|x|$ for $x\in B$ and we perform the following transformation
\begin{equation}\label{transformation-henon-no-c}
v(t)= \left(\frac{2}{2+\a}\right)^{\frac{2}{p-1}} u(r) , \qquad t= r^{\frac{2+\a}{2}},
\end{equation}
which has been introduced in \cite{GGN} and \cite{GGN2} in order to study the H\'enon problem, and transforms radial solutions to \eqref{H} into solutions of the one dimensional problem
\begin{equation}\label{LE-radial}
\begin{cases}
- \left(t v^{\prime}\right)^{\prime}= t |v|^{p-1} v  , \qquad  & 0< t< 1, \\
v'(0)=0 \ , \ v(1) =0  &
\end{cases}\end{equation}
Solutions to \eqref{LE-radial} can be seen as radial solutions to \eqref{LE} corresponding to the same exponent $p$
and the correspondence among radial solutions to \eqref{LE} and radial solutions to \eqref{H} is one-to-one. The condition $v'(0)=0$ can be not so evident and indeed it has been proved in \cite[Lemma 5.2]{AG-sez2} that any solution to \eqref{LE-radial} that satisfies
\begin{equation}\label{eq:sommabilita-v}
\int_0^1 t(v')^2\ dt<\infty\end{equation}
is a classical solution and satisfies $v'(0)=0$. 
It is then possible to apply a uniqueness result of \cite{NN} to have that for any integer $m\ge 1$ there exists only a couple of radial solutions to \eqref{LE-radial}
that are one the opposite of the other and classical solutions (see, for instance, \cite[Proposition 5.14]{AG-sez2}) which have exactly $m$ nodal zones, meaning that $u^1_p$ and $u_p^2$ are unique up to a sign.
So we denote hereafter by $v_p^1$ the unique positive solution to \eqref{LE-radial} and by $v_p^2$ any solution to \eqref{LE-radial} with $2$ nodal zones.  These solutions can be found minimizing the energy functional associated with \eqref{LE}
\[\mathcal E(w):=\frac 12 \int_B |\nabla w|^2-\frac 1{p+1}\int_B |w|^{p+1}\]
 on the radial Nehari set or on the nodal radial Nehari set, namely 
 \[\begin{split}
 & \mathcal N_{\rad}:=\{w\in H^1_{0,\rad}(B) \, : \,  \int_B|\nabla w|^2=\int_B |w|^{p+1}\} \\
  &\mathcal N_{\rad , \nod}:=\{w\in H^1_{0,\rad}(B) \, :\,  w^+\neq 0, w^-\neq 0 , w^+,w^-\in\mathcal N_{\rad} \}
  \end{split}
 \]
 and they are known as radial least energy and nodal least energy solutions to \eqref{LE}.

The asymptotic behaviour of the radial least energy solution $v_p^1$ has been studied in \cite{AdG} while the case of the radial least energy nodal solutions $v_p^2$ has been faced in  \cite{GGP14}.
Indeed radial solutions to \eqref{LE} tend to concentrate in the origin as $p$ goes to $\infty$ but, differently to what happen in the high dimensional case, the extremal values remain bounded when $p\to \infty$ so that the solutions behave like a spike and 
the concentration is different when it takes place in the first nodal domain or in the subsequent one.

The limit problem related to the first nodal domain  and hence to the positive solution $v_p^1$ is the Liouville equation  
\begin{equation}\label{Liouville}
\left\{\begin{array}{l}
-\Delta V= e^V\qquad \ \text{ in } \R^2, \\
\int_{\R^2}e^V\, dx <\infty ,
\end{array} \right.
\end{equation} 
whose radial solutions are 
\begin{align}
\label{Liouville1}V_{\d}(x) & = \log\dfrac{8\d }{(\d+|x|^{2})^2} & \text{ as }  \d >0.
\end{align}
In particular the unique solution to \eqref{Liouville} which satisfies the additional conditions $V(0)=0$  is the one with $\d=8$, i.e.
\begin{equation}\label{V}
V(x):=\log \frac{64}{\left(8+|x|^2\right)^2}.
\end{equation}
To describe the behaviour in the second nodal domain it  is also needed a singular Liouville equation, which is described in details in \cite{GGP14}, and we write here in the form
\begin{align}\label{S-L}\left\{\begin{array}{l}
-\Delta V= e^V- 2\pi \gamma \d_0 \qquad \ \text{ in } \R^2, \\
\int_{\R^2}e^V\, dx <\infty ,
\end{array} \right.
\end{align}	
where $\d_0$ stands for the Dirac measure centered at $x=0$ and $\gamma$ is a real parameter.
The family of its radial solutions is given by 
\begin{equation}
\label{Z}
Z_{\gamma;\d}(x):=\log \left( \frac{2(2+\gamma)^2\delta  |x|^{\gamma}}{    \left(\delta +|x|^{2+\gamma}\right)^2}\right)=U_{\gamma;\d}(x)+\gamma \log |x| \quad \text{ as } \d>0,
\end{equation}
where $U_{\gamma;\d}$ is a radial solution to the weighted Liouville equation \eqref{H-L} with $\a$ replaced by $\g$.
Imposing that, for some fixed $\ell>0$  that we make clear very soon, $V(t)=V(|x|)$ satisfies also 
\begin{equation}\label{eq:conditions-U}\nonumber
V(\ell)=0 \ \text{ and } \ V'(\ell)=0
\end{equation}
selects uniquely the parameters $\g$ and $\d$ as 
\begin{equation}\label{eq:relazioni-gamma-delta-l}
\gamma=\gamma(\ell)=\sqrt{2\ell^2+4} -2  \ \text{ and } \ \delta=\delta(\ell)=\frac{\gamma+4}{\gamma}\ell^{2+\gamma}.
\end{equation}
In the following we shall write $Z_\ell=Z_{\g(\ell);\d(\ell)}$ for such function.
Notice that the parameter $H$ in the notation used in \cite{GGP14} is identified by the relation
\begin{align*}
-H(\ell) & :=\int_0^{\ell} t e^{Z_{\ell}} dt  =  2(2+\gamma)\delta \int_0^{\ell} \frac{(2+\gamma) t^{1+\gamma}}{   \left(\delta+t^{2+\gamma}\right)^2} dt 
\\
& =  \frac{2(2+\gamma) \ell^{2+\g}}{ \delta+\ell^{2+\g}}  
\underset{\eqref{eq:relazioni-gamma-delta-l}}{=} \g(\ell)
\end{align*}	

Before entering the details of the asymptotic behaviour, let us spend some words about the relation between the limit problems for the Lane-Emden equation, \eqref{Liouville} and \eqref{S-L}, and the ones for the H\'enon equation, \eqref{H-L} and \eqref{S-H-L}.
	\begin{remark}\label{relazioni-limite}
		A slight variation on the transformation \eqref{transformation-henon-no-c}, namely 
		\begin{equation}\label{transformation-Liouville}
		s= \frac{2}{2+\a} r^{\frac{2+\a}{2}} , \qquad  V(s) = U(r) ,
		\end{equation}
		maps weak  radial  solutions to \eqref{Liouville} (respectively \eqref{S-L}) into weak  radial  solutions to \eqref{H-L} (respectively \eqref{S-H-L}).
		Indeed for any test function $\phi\in C^{\infty}_{0,\rad}(\R^2)$ we have
		\begin{align*}
		\int_0^{\infty} s  V' \, \phi' \,  ds - \int_0^{\infty} s e^V  \phi \, ds
		\underset{\eqref{transformation-Liouville}}{=}	\frac{2}{2+\a} \left[\int_0^{\infty} r  U' \, \psi' \,  dr - \int_0^{\infty} r^{1+\a} e^U \psi \, dr \right]
		\end{align*}
		for $\psi(r)=\phi(s)$.
		So the family of solutions of  the weighted Liouville problem \eqref{H-L} defined by \eqref{eq:U-alpha} and the one of the Liouville problem \eqref{Liouville} defined by \eqref{Liouville1} are related by 
		\begin{equation}\label{relazione:U-V} U_{\a;\left(\frac{2+\a}2\right)^2\d}(r) = V_{\d}(s) ,
		\end{equation}
		and in particular the solutions which are null at the origin are $U_{\a;\delta(\a)}(r)=V(s)$ as defined in \eqref{eq:delta-1} and \eqref{V}, respectively.
		\\
		Similarly the solutions of  singular weighted Liouville problem \eqref{S-H-L} defined by \eqref{Z-a-g}  and the ones of the singular Liouville problem \eqref{S-L} defined by \eqref{Z}  satisfy 
		\begin{equation}\label{relazione:pippo-Z}
		Z_{\a,\g; \left(\frac{2+\a}2\right)^{2+\g}\d}(r)=Z_{\gamma;\d}(s).
		\end{equation}
		In particular the additional conditions \eqref{eq:conditions-U} for \eqref{S-L} correspond to the following additional conditions for \eqref{S-H-L}
		\begin{equation}\label{eq:condition-U-H}
		U(\ell_{\a})=0, \quad U'(\ell_{\a})= 0 , \quad \text{ for } \ \ell_{\a} = \left(\frac{2+\a}{2}\ell\right)^{\frac{2}{2+\a}}
		\end{equation}
		and select uniquely the parameter $\g=\gamma(\ell)$ and $\d_2(\a)= \left(\frac{2+\a}{2}\right)^{2+\g}\d(\ell)$, where $\gamma(\ell)$ and $\d(\ell)$ are given by \eqref{eq:relazioni-gamma-delta-l}.
		It is also worth of noticing that they are the same values of the parameters selected in  \eqref{g-d-ell}, and that for this particular choice we have 
		\begin{align*}
		\int_0^{\ell_{\a}} r^{1+\a} e^{Z_{\a,\g;\delta_2(\a)}} dr \underset{\eqref{transformation-Liouville}}{=} \frac{2+\a}{2} \int_0^{\ell} s e^{Z_{\ell}(s)} ds = \frac{2+\a}{2}\g .
		\end{align*}
\end{remark}	


Some more notations are needed to describe the asymptotic behaviour of the solutions.
Concerning the positive least energy radial solution $v^1_p$, its maximum is $
v^1_p(0)$, so we introduce the scaling parameter
\begin{equation}\label{eq:epsilon-1}
\e_p:=(p
(\left(v^1_p(0)\right)^{p-1})^{-\frac 12}
\end{equation}
and the rescaled function
\begin{equation}\label{eq:v-tilde-1}
\widetilde v_p(t):=\frac{p(v^1_p(\e_p t)-v^1_p(0))}{v^1_p(0)} \ \ \text{ for } 0\le t <  \frac{1}{\e_p} . 
\end{equation}
 Extending some previous results in \cite{RW}, in \cite{AdG} it has been proved that 
\begin{proposition}\label{prop-3-1}
	Let  $v^1_p>0$ be the  radial least energy solution to \eqref{LE} related to the exponent $p$. Then as $p\to \infty$ we have $v_p^1(0)\to\sqrt e$, $\e_p\to 0$ and
	\begin{equation}\label{limit-z+}
	\widetilde v_p\to V \ \  \text{ in }C^1_{\loc}[0,\infty) .
	\end{equation}
\end{proposition}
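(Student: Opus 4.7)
The plan is to adapt the strategy of Ren--Wei and Adimurthi--Grossi, originally developed for $-\Delta u=u^p$ in planar bounded domains, to the weighted ODE \eqref{LE-radial}. Write $M_p:=v_p^1(0)$; integrating $(t(v_p^1)')'=-t(v_p^1)^p$ from $0$ shows $(v_p^1)'(t)=-\frac{1}{t}\int_0^t s(v_p^1)^p\,ds<0$ wherever $v_p^1>0$, so $M_p=\|v_p^1\|_\infty$. The three tasks are to obtain uniform two-sided bounds on $M_p$, to pass to the limit in the rescaled equation, and finally to identify $M_\infty=\sqrt{e}$.

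For the bounds, an upper bound $M_p\le C$ follows from a Ren--Wei type argument combining the Moser--Trudinger inequality with the Nehari identity
\[
\int_0^1 t\,|(v_p^1)'|^2\,dt=\int_0^1 t\,(v_p^1)^{p+1}\,dt,
\]
which controls $M_p^{p+1}$ in terms of the Dirichlet energy. A lower bound $M_p\ge 1+\delta$ follows from variational comparison with a concentrated test function: this bounds $\mathcal E(v_p^1)$ from below, and combined with the Nehari identity forces $M_p$ away from $1$. Consequently $\e_p=(pM_p^{p-1})^{-1/2}\to 0$ exponentially.

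For the rescaled convergence, direct substitution in \eqref{LE-radial} and the normalization $p\e_p^2 M_p^{p-1}=1$ yield
\[
-\widetilde v_p''(s)-\frac{1}{s}\widetilde v_p'(s)=\left(1+\frac{\widetilde v_p(s)}{p}\right)^p\quad\text{on }[0,1/\e_p),
\]
with $\widetilde v_p(0)=\widetilde v_p'(0)=0$. Since $M_p$ is the maximum, $\widetilde v_p\le 0$, so the right-hand side is bounded by $1$. Integrating twice gives local uniform $C^1$ bounds, and by Arzel\`a--Ascoli a subsequence converges in $C^1_{\loc}[0,\infty)$ to a function $V$ solving $-V''-V'/s=e^V$ with $V(0)=V'(0)=0$. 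The radial classification \eqref{Liouville1} together with the normalization singles out $V$ as the function in \eqref{V}, and the whole sequence converges by uniqueness.

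For the sharp constant $M_\infty=\sqrt{e}$, the key is to combine the Nehari identity with the Pohozaev-type identity obtained by multiplying \eqref{LE-radial} by $t(v_p^1)'$, which after integration by parts over $[0,1]$ gives
\[
\frac{|(v_p^1)'(1)|^2}{2}=\frac{2}{p+1}\int_0^1 t\,(v_p^1)^{p+1}\,dt.
\]
Rescaling the right-hand side via $t=\e_p s$ and using $(1+\widetilde v_p/p)^p\to e^V$ yields $p\int_0^1 t(v_p^1)^{p+1}dt\to M_\infty^2\int_0^\infty se^V\,ds=4M_\infty^2$. A matched asymptotic analysis then couples this inner estimate with an outer expansion of $v_p^1$ near $t=1$, where the solution is well approximated by a multiple of a logarithmic Green function, and produces the identity $\log M_\infty^2=1$, i.e.\ $M_\infty=\sqrt{e}$. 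This final matching is the main obstacle: the convergence of $\widetilde v_p$ alone fixes only the shape of the inner profile, not its prefactor, so one has to carefully track the boundary layer contribution, exactly as done for the planar Laplacian in \cite{AdG}.
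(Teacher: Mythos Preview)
The paper does not give its own proof of this proposition: it is stated as a result quoted from \cite{AdG} (extending \cite{RW}), with no argument supplied. So there is nothing to compare against beyond the cited reference.

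Your sketch does follow the Adimurthi--Grossi strategy faithfully. The parts you actually carry out are correct: the monotonicity of $v_p^1$ so that $M_p=\|v_p^1\|_\infty$; the rescaled ODE
\[
-\widetilde v_p''-\tfrac{1}{s}\widetilde v_p'=\Big(1+\tfrac{\widetilde v_p}{p}\Big)^{p},\qquad \widetilde v_p(0)=\widetilde v_p'(0)=0,\ \widetilde v_p\le 0,
\]
the local $C^1$ compactness, and the identification of the limit by the ODE initial data (which is cleaner here than invoking the integrability condition in \eqref{Liouville}, since $V(0)=V'(0)=0$ already determines the radial solution uniquely). Your Pohozaev computation is also right, and the rescaling gives $p\int_0^1 t(v_p^1)^{p+1}\,dt\to 4M_\infty^2$ once one justifies passing to the limit (this needs a uniform tail estimate, not just local convergence, but that is part of the standard machinery).

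Where your sketch stops is exactly where the real work in \cite{AdG} lies: you assert that the inner/outer matching ``produces the identity $\log M_\infty^2=1$'' without explaining the mechanism. Concretely, one needs the outer description $p\,v_p^1(t)\to 8\pi M_\infty\, G(t,0)$ in $C^1$ away from the origin (with $G$ the Green function of the disc), then reads off $p\,(v_p^1)'(1)$ from this and equates it with what the Pohozaev identity gives; the resulting algebraic relation is what pins down $M_\infty=\sqrt e$. Since you explicitly flag this as the main obstacle and defer to \cite{AdG}, your write-up is honest, and it is in effect what the paper itself does.
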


For what concerns the least energy nodal radial solution $v^2_p$, we let 
$0<t_{1,p}< t_{2,p}=1$ its zeros, and $t_{0,p}=0$, so that its nodal zones are $B_{1,p}=[t_{0,p},t_{1,p})$ and $B_{2,p}=(t_{1,p},t_{2,p})$.  It has only two critical points, $s_{1,p}=0 \in B_{1,p}$ and $s_{2,p}\in B_{2,p}$, corresponding to two extremal values. We 
define two scaling parameters
\begin{equation}\label{eq:epsilon-2}
\e_{p}^i:=(p | v^2_p(s_{i,p} )|^{p-1})^{-\frac 12} \quad \text{ as } i=1,2
\end{equation}
and two rescaled functions 
\begin{equation}\label{eq:v-tilde-2}
\begin{split}
\widetilde v_{i,p}(t):= & \frac{p(v^2_p(\e_{p}^i t)-v^2_p(s_{i,p}))}{v^2_p(s_{i,p}) } \quad \text{ for }  0\leq t<\frac 1 {\e_{p}^i} .
\end{split}\end{equation}
The asymptotic profile of this solution $v_p^2$ has been described in the paper \cite{GGP14} where the parameters $\bar t$ and $\ell$ have been characterized. We report here a slight modified statement of their result, already appearing in \cite{DIPN=2}.
\begin{proposition}\label{prop-3-2}
	Let  $v_p^2$ be  a least energy nodal radial nodal solution to \eqref{LE} related to the exponent $p$.
Then as $p\to \infty$ we have 
\begin{equation} \label{eq:massimi-pcritico-v}
|v^2_p(0)| \to \frac {\sqrt e}{\bar t}e^{\frac {\bar t}{2(\bar t +\sqrt e)}} \approx 2.46, \quad
 |v^2_p(s_{2,p})| \to e^{\frac {\bar t}{2(\bar t +\sqrt e)}} \approx 1.17, \quad  \e_{p}^i\to 0  \  \text{ as } i=1,2
\end{equation}
where $\bar t\approx 0.7875$ is the unique root of the equation $2\sqrt e \log t+t=0$, and 
\begin{equation}
\label{limit-z+2}
\widetilde v_{1,p}\to V  \qquad \quad   \text{ in }C^1_{\loc}[0,\infty)  . 
\end{equation}
Moreover 
\begin{align}
\label{limit-zone}
 t_{1,p}\to 0 ,  \qquad  \frac{t_{1,p}}{\e^1_p} \to \infty , \qquad &  \frac{t_{1,p}}{\e^2_p} \to 0\\
\label{eq:def-l}
s_{2,p} \to 0 , \qquad  \phantom{ \frac{t_{1,p}}{\e^1_p} \to \infty , \qquad } & \frac{s_{2,p}}{\e_p^{ 2}}\to \ell \simeq 7.1979, 
\end{align}
\begin{equation}
\label{limit-z-}
\widetilde v_{2,p}\to  Z_\ell \qquad \quad   \text{ in } \ C^1_{\loc}(0,\infty) . 
\end{equation}
\end{proposition}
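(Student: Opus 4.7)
The statement is essentially a restatement of the main results of \cite{GGP14} (in the form of \cite{DIPN=2}), so my plan is to follow their blow-up scheme, adapted to the one-dimensional variable $t$. First I would fix the structure of $v_p^2$ using the ODE uniqueness result of \cite{NN}: exactly one interior zero $t_{1,p}$ and two critical points $s_{1,p}=0$, $s_{2,p}\in B_{2,p}$. Energy estimates on each nodal zone, coupled with a Pohozaev-type identity obtained by testing \eqref{LE-radial} against $tv'$, produce $\int_{B_{i,p}} t(v_p^{2\prime})^2\, dt$ bounded uniformly in $p$ and also uniformly from below. Standard consequences then give $|v_p^2(s_{i,p})|$ bounded from below, hence $p|v_p^2(s_{i,p})|^{p-1}\to\infty$ and $\e_p^i\to 0$, which is the last assertion of \eqref{eq:massimi-pcritico-v}.

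Next I would carry out the blow-up in the first nodal zone. Inserting $v_p^2(\e_p^1 t)=v_p^2(0)(1+\widetilde v_{1,p}(t)/p)$ into \eqref{LE-radial} gives
\[-(t\widetilde v_{1,p}')'=t\,\bigl|1+\tfrac{\widetilde v_{1,p}}{p}\bigr|^{p-1}\bigl(1+\tfrac{\widetilde v_{1,p}}{p}\bigr)\]
on $[0,t_{1,p}/\e_p^1]$, with $\widetilde v_{1,p}(0)=\widetilde v_{1,p}'(0)=0$. Uniform $C^1$-bounds on compact sets follow from the ODE formulation, and letting $p\to\infty$ yields $-(tV')'=te^V$ with $V(0)=V'(0)=0$. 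The Chen--Li classification identifies the limit as the function $V$ of \eqref{V}, giving \eqref{limit-z+2}. The property $t_{1,p}/\e_p^1\to\infty$ then follows since $V$ stays bounded on $(0,\infty)$, while reaching the rescaled zero of $v_p^2$ would require $\widetilde v_{1,p}=-p\to-\infty$.

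The analysis in the second nodal zone is more delicate: after rescaling around $s_{2,p}$, the inner boundary satisfies $t_{1,p}/\e_p^2\to 0$ and the mass of the first nodal zone concentrates at the origin, producing the singular Liouville equation \eqref{S-L} in the limit. Imposing $\widetilde v_{2,p}(s_{2,p}/\e_p^2)=\widetilde v_{2,p}'(s_{2,p}/\e_p^2)=0$ and the assumption $s_{2,p}/\e_p^2\to\ell$, the limit is forced to be $Z_\ell$ with parameters $\gamma(\ell),\delta(\ell)$ as in \eqref{eq:relazioni-gamma-delta-l}, giving \eqref{limit-z-}. The numbers $\bar t$ and $\ell$ and the limit extremal values in \eqref{eq:massimi-pcritico-v} are then obtained by matching the inner limits against an outer Green-kernel representation of $v_p^2$: expanding on the two scales yields $v_p^2(s_{2,p})/v_p^2(0)\to \bar t/\sqrt e$ with $\bar t$ characterized by $2\sqrt e\log\bar t+\bar t=0$, and compatibility with the Pohozaev identity on $B_{2,p}$ pins down $\ell\simeq 7.1979$ and supplies the explicit limits. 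The main obstacle is the rigorous justification of the Dirac source in the second-zone limit together with the closure of the matching system; both steps are carried out in \cite{GGP14} via sharp pointwise estimates on $pv_p^2(\e_p^2 t)/v_p^2(s_{2,p})$ on compact subsets of $(0,\infty)$.
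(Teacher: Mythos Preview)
Your proposal is correct and matches the paper's treatment: the paper does not prove this proposition at all but simply cites it as a restatement of results from \cite{GGP14} (in the form given in \cite{DIPN=2}), exactly as you identify in your opening sentence. Your sketch of the blow-up scheme from \cite{GGP14} is a faithful outline of their argument and goes beyond what the paper itself provides.
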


Since the solutions to the Lane-Emden equation are linked to the ones of the H\'enon equations by means of the transformation \eqref{transformation-henon-no-c},  the asymptotic behaviours of the last ones  stated by Theorem \ref{teo:asymptotic-of-u-1} and \ref{teo:asymptotic-of-u} follow easily Propositions \ref{prop-3-1} and \ref{prop-3-2}.
We report only the proof concerning the nodal solution $u^2_p$, because the other one is very similar.
\begin{proof}[Proof of Theorem \ref{teo:asymptotic-of-u}]
	By means of the transformation \eqref{transformation-henon-no-c}, it is clear that the items related to the Lane-Emden solution $v^2_p$ and the respective ones for the H\'enon problem $u^2_p$ are linked by the following relations 
		\[\begin{array}{cc}\
	r_{p} = t_{1,p}^{\frac2{2+\a}} , \quad & \quad \sigma_{p} = s_{2,p}^{\frac2{2+\a}} , \\
	{\mu}_{i,p} = \left(\frac{2+\a}2\right)^{\frac{2}{p-1}} |v_p^2(s_{i,p})|, \quad & \quad \rho_p^{ i} = \left(\frac{2}{2+\a} \e_p^ i\right)^{\frac2{2+\a}},
	\end{array}\]
	\[
	\widetilde u_{i,p}(r)=\widetilde v_{i,p}\left( \frac 2{2+\a} r^{\frac {2+\a}2}\right).
	\]
	Since $\left(\frac{2+\a}2\right)^{\frac{2}{p-1}}\to 1$ then $\lim_{p\to \infty}{\mu}_{i,p} =\lim _{p\to \infty}|v_p^2(s_{i,p})|$.
	So the claims concerning ${\mu}_{i,p}$, $\rho_p^i$, $r_{p}$, and $\sigma_p$ readily follows by the results recalled in Proposition \ref{prop-3-2}, in particular  the second item in \eqref{ell-alpha} is implied by \eqref{eq:def-l}.
	Eventually  \eqref{eq:convergenza-riscalata} and \eqref{soluzione-pcritico} follow by \eqref{limit-z+2} and \eqref{limit-z-}, by the computations made in Remark \ref{relazioni-limite}.
\end{proof}

\section{The Morse index of $u_p^1$ and $u_p^2$}\label{sec:3}

In this section we address to the computation of the Morse index of  radial least energy solutions $u_p^1$ and $u_p^2$ when $p$ goes to $\infty$.
By definition  the Morse index of a radial solution $u_p$ to \eqref{H}, that we denote by $m(u_p)$, is  the maximal dimension of a subspace of $H^1_0(B)$ in which the quadratic form 
$Q_u$ is negative defined, or equivalently, is the number, counted with multiplicity, of negative eigenvalues in $H^1_0(B)$ of
\begin{equation}\label{eigenvalue-problem}
\left\{\begin{array}{ll}
-\Delta  \phi-p|x|^\a {    |u_p|^{p-1}}  \phi =\L_h(p)\, \phi & \text{ in } B\\
\phi= 0 & \text{ on } \partial B.
\end{array} \right.
\end{equation}
Similarly the radial Morse index of $u_p$, denoted by $m_\rad(u_p)$, is  the number  of negative eigenvalues of  \eqref{eigenvalue-problem} in $H^1_{0,\rad}(B)$, namely the eigenvalues of \eqref{eigenvalue-problem} associated with a radial eigenfunction.
It is known by  \cite[Theorem 1.7]{AG-sez2} that the radial Morse index is equal to the number of nodal zones, 
 that is $m_{\rad}(u^1_p)=1$ and $m_{\rad}(u^2_p)=2$. 
In the mentioned paper \cite{AG-sez2} it has been proved that the Morse index (or radial Morse index) of $u_p$ is the number, counted with multiplicity, of negative eigenvalues $\widehat\Lambda _h(p)$ (negative radial eigenvalues $\widehat\Lambda _h^\rad (p)$ resp.)
of the singular eigenvalue problem 
\begin{equation}\label{singular-eigenvalue-problem}
\left\{\begin{array}{ll}
-\Delta  \widehat\phi-p|x|^\a|u_p|^{p-1}\widehat\phi =\dfrac{\widehat\L_h(p)}{|x|^2}\widehat\phi & \text{ in } B\setminus\{0\}\\
\widehat\phi= 0 & \text{ on } \partial B,
\end{array} \right.
\end{equation}
in $\mathcal H_0$ ($\mathcal H_{0,\rad}$ resp.). Here $\mathcal H_0:=H^1_0\cap \mathcal L$ and  $\mathcal L$ is the Lebesgue space
\[{\mathcal L}:=\{w:B\to \R \text{ measurable and s.t. } \int_B |x|^{-2} w^2 dx < +\infty\}\]
  with the scalar product
  $\int_B |x|^{-2}\psi w\ dx$, which gives the orthogonality condition
  \[ w\underline \perp \psi \Longleftrightarrow \int_B |x|^{-2} w \psi dx =0  \  \text{for }w,\psi\in \mathcal L\]
  and $\mathcal L_{\rad}$ and $\mathcal H_{0,\rad}$ are their subspaces given by radial functions.
Of course $\mathcal H_0$ ($\mathcal H_{0,\rad}$ resp.)
are Banach and Hilbert spaces with the norm 
\[ \|w\|_{\mathcal{H}_0}^2=\int _B |\nabla w|^2 + |x|^{-2}w^2\ dx.\] 
By weak solutions to \eqref{singular-eigenvalue-problem} we mean a function $\widehat \phi \in \mathcal H_0$  that satisfies
\begin{equation}\label{eq:weak}
\int_B \nabla \widehat \phi \nabla w-  p|x|^\a |u_p|^{p-1} \widehat \phi w=\widehat \L_i(p) \int_B |x|^{-2} \widehat \phi w 
\end{equation}
 for any $w\in \mathcal H_0$. Let us remark that, since $C^{\infty}_0(B\setminus\{0\})$ is dense in $\mathcal H_0$ with respect to the norm $\|\cdot
\|_{\mathcal H_0}$ (see Lemma \ref{prop-density} in the Appendix), it is enough to take the test functions $w$ in \eqref{eq:weak} in $C^{\infty}_0(B\setminus\{0\})$.
Nevertheless by \cite[Prop. 3.1]{AG-sez2} a weak solution $\widehat \phi$ is a classical solution to \eqref{singular-eigenvalue-problem} in $B\setminus\{0\}$.
\\
Moreover these singular eigenvalues $\widehat \L_h(p)$ have the useful property that can be decomposed as
\begin{equation}\label{decomposition}
\widehat \L _h(p)=\widehat \L_j^{\rad}(p)+k^2,
\end{equation}
where $k^2$ are the eigenvalues of the Laplace-Beltrami operator on ${\mathbb S}_1$, and $\widehat \L_j^{\rad}(p)$ are the radial singular eigenvalues of \eqref{singular-eigenvalue-problem},
 which are all simple by \cite[Property 5]{AG-sez2}. Then an eigenfunction $\widehat \phi_h\in \mathcal H_0$ corresponding to $\widehat \L_h(p)$ is given, in polar coordinate $(r,\theta)$, by 
\begin{equation}\label{eq:autofunzioni}
\widehat \phi_h(r,\theta)=\widehat \phi_j^\rad (r)(A \cos (k \theta)+B\sin (k \theta))
\end{equation}
where $\widehat \phi_j^\rad \in \mathcal H_{0,\rad}$ is an eigenfunction associated with $\widehat \L_j^{\rad}(p)$ and $A,B\in \R$. \\
This decomposition allows from one side to easily compute the Morse index of a radial solution $u_p$  knowing only the radial eigenvalues $\widehat \L_j^{\rad}(p)$ and, from the other side, is useful to understand the feasible symmetries that nonradial solutions can have in order to prove the existence results, see Section \ref{sec:4}.
\\
Performing again the transformation in \eqref{transformation-henon-no-c} and letting $\psi(t)=\widehat \phi(r)$ we have that the computation of the Morse index is linked to the singular Sturm-Liouville problem 
	\begin{equation}\label{radial-general-H-no-c}
	\left\{\begin{array}{ll}
	- \left(t \psi'\right)'- t p|v^i_p|^{p-1} \psi = t^{-1} {\widehat \nu}_j(p)  \psi & \text{ for } t\in(0,1)\\
	\psi\in  \mathcal H_{0,\rad}
	\end{array} \right.
	\end{equation} 
	where $v^i_p$ for $i=1,2$ is defined in \eqref{transformation-henon-no-c}
	and the radial singular eigenvalues $\widehat \L_j^\rad(p)$ are linked to the singular eigenvalues $\widehat \nu_j(p)$ by the relation
	\[\widehat \L_j^\rad(p)=\frac {(2+\a)^2}{4}\widehat \nu_j(p)\]
	see  \cite[Lemma 5.7]{AG-sez2}.
	Recall that $\psi$ is a weak solution to \eqref{radial-general-H-no-c} means that 
	\begin{equation}\label{radial-general-weak-H-no-c}
	\int_0^1 t \psi'\varphi' \ dt- p\int_0^1 t |v^i_p|^{p-1} \psi\varphi\ dt={\widehat \nu}_j(p)\int_0^1  t^{-1}   \psi\varphi \ dt
	\end{equation}
	for every $\varphi \in \mathcal H_{0,\rad}$  or for every $\varphi \in C^{\infty}_{0,\rad}(B\setminus\{0\})$.\\
The space $\mathcal H_{0,\rad}$ 
	 is introduced to obtain compactness in the variational formulation of \eqref{singular-eigenvalue-problem}  or, equivalently, \eqref{radial-general-H-no-c}. Anyway compactness is possible only for negative eigenvalues, as rigorously proved in  \cite{GGN2}. 
	As far as 	
	\[ {\mathcal R}^i (\phi): = \frac{\int_0^1 t\left(|\phi'|^2 -p|v^i_p|^{p-1}\phi^2 dt\right) dr }{\int_0^1 t^{-1}\phi^2 dt}
	\]
	has a negative infimum on $\mathcal H_{0,\rad}$, such infimum is attained by a function $\psi_{1,p}\in \mathcal H_{0,\rad}$ which is a weak solution to \eqref{radial-general-H-no-c} corresponding to
	\begin{equation}\label{nu-var-1}
	{\widehat \nu}^i_1(p)=\min \left\{ {\mathcal R}^i (\phi) \, : \, \phi\in {\mathcal H}_{0,\rad} , \; \phi\neq 0 \right\} .
	\end{equation}
	Next if ${\mathcal R}^i$ has a negative infimum also in the subspace of $\mathcal H_{0,\rad}$ orthogonal to $\psi_{1,p}$, meaning that
	\begin{equation}\label{orto}
	\phi \underline{\perp}  \psi  \Longleftrightarrow \int_0^1 t^{-1} \phi \psi dt =0 ,\end{equation}
	such infimum is attained by a function $\psi_{2,p} \in \mathcal H_{0,\rad}$, $\psi_{2,p}\underline{\perp}  \psi_{1,p}$, which is a weak solution to \eqref{radial-general-H-no-c} corresponding to
	\begin{equation}\label{nu-var-i}
	{\widehat \nu}^i_{2}(p)=\min \left\{ {\mathcal R}^i (\phi) \, : \, \phi\in   {\mathcal H}_{0,\rad} , \; \phi\neq 0 , \; \phi\underline \perp \psi_{1,p} \right\},
	\end{equation}
	and the procedure can be iterated.
	\\
	These generalized radial singular eigenvalues ${\widehat \nu}^i_j(p)$, (associated with $v^1_p$ or $v^2_p$) have been studied in \cite[Subsections 3.1 and 5.3]{AG-sez2} where it is proved, among other things,  that they are all {\em simple} and the only negative eigenvalues of \eqref{radial-general-H-no-c} are 
	\begin{align}
	\label{stima-primo-autov} &  -1 < \widehat \nu_1^1(p)<0 , &   \text{ and }\\
	\label{stima-autov} & \widehat \nu_1^2(p)<-1<  \widehat \nu_2^2(p)<0 & 
	\end{align}
	for any value of the parameter $p$. 
	Besides  the radial Morse index of $u_p^1$ and $u_p^2$ coincides with the number of negative eigenvalues of \eqref{radial-general-H-no-c} (see \cite[Lemma 5.7 and Remark 5.12]{AG-sez2}). 
	Furthermore also the Morse index can be computed starting from the singular eigenvalues as follows: 
	\begin{proposition}\label{general-morse-formula-H}
			 For every $\a\ge 0$ the Morse index of $u^i_p$ is given by
		\begin{align}\label{tag-2-H}
		m(u^i_p) & = 2 \sum\limits_{j=1}^{i} \left\lceil \frac{2+\a}{2} \sqrt{-  \widehat {\nu}^i_j(p)} \right\rceil    -i 
		\end{align}
	 as $i=1,2$.
\end{proposition}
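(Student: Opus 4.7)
The plan is to combine the characterization of the Morse index in terms of the singular eigenvalues of \eqref{singular-eigenvalue-problem} (from \cite{AG-sez2}) with the angular decomposition \eqref{decomposition}--\eqref{eq:autofunzioni}, and then count.

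First I would recall that, by the cited result in \cite{AG-sez2}, $m(u_p^i)$ equals the number (with multiplicity) of negative singular eigenvalues $\widehat\Lambda_h(p)$ of \eqref{singular-eigenvalue-problem} in $\mathcal H_0$. By the decomposition \eqref{decomposition} together with \eqref{eq:autofunzioni}, each $\widehat\Lambda_h(p)$ has the form
\[
\widehat\Lambda_h(p)=\widehat\Lambda^\rad_j(p)+k^2, \qquad j\ge 1,\; k\in\mathbb N\cup\{0\},
\]
and the multiplicity of such a pair $(j,k)$ is $1$ if $k=0$ and $2$ if $k\ge 1$ (coming from $\cos(k\theta)$ and $\sin(k\theta)$), since each $\widehat\Lambda^\rad_j(p)$ is simple. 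Using the relation $\widehat\Lambda^\rad_j(p)=\frac{(2+\a)^2}{4}\widehat\nu^i_j(p)$, the inequality $\widehat\Lambda_h(p)<0$ becomes
\[
k^2<\frac{(2+\a)^2}{4}\bigl(-\widehat\nu^i_j(p)\bigr),\quad\text{i.e.}\quad 0\le k<\frac{2+\a}{2}\sqrt{-\widehat\nu^i_j(p)} ,
\]
which in particular forces $\widehat\nu^i_j(p)<0$.

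Next I would invoke \eqref{stima-primo-autov}--\eqref{stima-autov}: for $u^1_p$ only $\widehat\nu^1_1(p)<0$ and for $u^2_p$ only $\widehat\nu^2_1(p),\widehat\nu^2_2(p)<0$. Hence the relevant indices are exactly $j=1,\dots,i$ and the other $\widehat\nu^i_j(p)$ do not contribute. Setting
\[
K_j:=\left\lceil\frac{2+\a}{2}\sqrt{-\widehat\nu^i_j(p)}\,\right\rceil ,
\]
the number of non-negative integers $k$ with $k<\frac{2+\a}{2}\sqrt{-\widehat\nu^i_j(p)}$ is exactly $K_j$ (the ceiling handles correctly the degenerate case when the right-hand side happens to be an integer, in which $k=K_j$ would give a zero, not negative, eigenvalue). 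Counting multiplicities, the contribution of the $j$-th radial branch to the Morse index is
\[
1\cdot\mathbf 1_{\{k=0\}}+2\cdot\bigl(K_j-1\bigr)=2K_j-1 .
\]

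Summing over $j=1,\dots,i$ then yields
\[
m(u_p^i)=\sum_{j=1}^{i}\bigl(2K_j-1\bigr)=2\sum_{j=1}^{i}\left\lceil\frac{2+\a}{2}\sqrt{-\widehat\nu^i_j(p)}\,\right\rceil - i ,
\]
which is \eqref{tag-2-H}. The only delicate point is verifying that no negative eigenvalue of the singular problem is missed: this is guaranteed by the completeness of the angular decomposition in $\mathcal H_0$, which follows from the density of $C^\infty_0(B\setminus\{0\})$ and the splitting of a radial weight problem via Fourier series on $\mathbb S^1$, already used in \cite{AG-sez2}. This is the step that one must state carefully; the rest is a clean counting argument.
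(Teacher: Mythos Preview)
Your proof is correct and follows exactly the approach indicated in the paper: the paper simply cites Proposition~1.5, Theorem~1.7 and Remark~5.12 from \cite{AG-sez2} together with the multiplicities $N_0=1$, $N_k=2$ of the Laplace--Beltrami eigenvalues on $\mathbb S^1$, and you have spelled out the underlying counting argument in detail. There is nothing to add.
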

	Indeed formula \eqref{tag-2-H} is obtained putting together Proposition 1.5,  Theorem 1.7 and Remark 5.12 from \cite{AG-sez2}, and recalling that in dimension $N=2$ the multiplicity of the eigenvalues $\lambda_j=-j^2$ of the Laplace-Beltrami operator are $N_0=1$, $N_j=2$ for $j\ge 1$.

	Therefore the Morse index of least energy radial solutions for large values of $p$ can be deduced by the asymptotic behaviour of the singular eigenvalues and of the related eigenfunctions.
	It is therefore needed to look at the limit eigenvalue problem, which can be deduced from \eqref{radial-general-H-no-c} via the asymptotic behaviour of the functions $v^i_p$ recalled in previous Section. As the latter depends heavily by the number of nodal zones, we deal first with the minimal energy radial solution $u^1_p$ (in Subsection \ref{sec:3.1}), and then with the minimal energy nodal radial solution $u^2_p$ (in Subsection \ref{sec:3.2}).

\subsection{The case of the positive solution $u_p^1$}\label{sec:3.1}
In this subsection we analyze the least energy radial solution $u_p^1$ in order to compute its Morse index, depending on $\a$ when $p$ is sufficiently large. By the aforementioned results in this case \eqref{radial-general-H-no-c} has only one negative eigenvalue which will be simply denoted by $\nu_1(p)$ henceforth. 
It satisfies  \eqref{stima-primo-autov} and formula \eqref{tag-2-H} simplifies into
\begin{align}\label{tag-2-H-1}
m(u^1_p) & = 2 \left\lceil \frac{2+\a}{2} \sqrt{- {\nu}_i(p) }\right\rceil    - 1 .
\end{align}
Then the result in Theorem \ref{teo-morse-1} is a consequence of Proposition \ref{general-morse-formula-H} once we have proved 
that:
\begin{proposition}\label{prop:primo-autovalore-1}
	Let $\nu_1(p)$ the unique radial singular negative eigenvalue of \eqref{radial-general-H-no-c}  corresponding to $v^1_p$. Then 
	\begin{align}\label{lim-autov-palla-1-sol-positive}
	&\lim_{p\to \infty}  \nu_1(p)=-1
	\end{align}
\end{proposition}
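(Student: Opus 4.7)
The lower bound $\nu_1(p)>-1$ in \eqref{stima-primo-autov} already yields $\liminf_{p\to\infty}\nu_1(p)\ge -1$, so the plan is to exhibit a family of admissible test functions $\phi_p\in\mathcal{H}_{0,\rad}$ for which the Rayleigh quotient $\mathcal{R}^1(\phi_p)$ tends to $-1$, and then conclude via the variational characterization \eqref{nu-var-1} that $\limsup_{p\to\infty}\nu_1(p)\le -1$.

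The natural candidate arises from rescaling \eqref{radial-general-H-no-c} by $t=\e_p s$: the potential factor transforms as $p\e_p^2|v_p^1(\e_p s)|^{p-1}=(1+\widetilde v_p(s)/p)^{p-1}$, which converges pointwise to $e^{V(s)}$ by Proposition \ref{prop-3-1}. The limiting radial singular eigenvalue equation on $(0,\infty)$, namely $-(s\psi')'-se^V\psi=\nu s^{-1}\psi$, is solved by $\widetilde\phi(s):=s/(8+s^2)$ at $\nu=-1$, as a direct substitution shows. This value is not accidental: $\widetilde\phi$ is proportional to the radial factor of the translation-induced kernel element $\partial_{x_1}V$ of the Liouville linearization $-\Delta-e^V$, and the angular decomposition \eqref{decomposition} with $k=1$ converts the null eigenvalue of $\partial_{x_1}V$ into $\nu=-1$ in the radial singular spectrum. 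Positivity of $\widetilde\phi$ on $(0,\infty)$ identifies $-1$ as the principal eigenvalue of the limit problem.

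Accordingly I would take $\phi_p(t):=\chi(t)\widetilde\phi(t/\e_p)$ with a cutoff $\chi\in C^\infty([0,1])$ satisfying $\chi\equiv 1$ on $[0,\tfrac{1}{2}]$ and $\chi(1)=0$, so that $\phi_p\in\mathcal{H}_{0,\rad}$. After changing variables to $s=t/\e_p$ and exploiting the two-dimensional scale invariance of the Dirichlet and $L^2(t^{-1}dt)$ integrals, the three integrals entering $\mathcal R^1(\phi_p)$ reduce to their limits on $(0,\infty)$, up to cutoff errors that vanish with $\e_p\to 0$. A direct computation gives
\[ \int_0^\infty s(\widetilde\phi')^2\,ds=\tfrac{1}{48},\qquad \int_0^\infty s^{-1}\widetilde\phi^2\,ds=\tfrac{1}{16},\qquad \int_0^\infty s e^V\widetilde\phi^2\,ds=64\int_0^\infty\frac{s^3}{(8+s^2)^4}\,ds=\tfrac{1}{12},\]
whence $\mathcal R^1(\phi_p)\to(1/48-1/12)/(1/16)=-1$, which combined with the a priori lower bound produces $\nu_1(p)\to -1$.

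The main technical obstacle is the convergence of the weighted potential term up to the moving boundary $s=1/\e_p$, since the $C^1_{\loc}$ convergence of $\widetilde v_p$ alone does not control the outer tail. I would handle this by the standard two-scale splitting: on $(0,R)$ the uniform convergence $(1+\widetilde v_p/p)^{p-1}\to e^V$ together with the bound $\|v_p^1\|_{L^\infty}\le C$ permits dominated convergence, while on $(R,1/\e_p)$ the rapid decay $\widetilde\phi(s)\sim 1/s$ and the pointwise decay estimates on $v_p^1$ outside a fixed neighbourhood of the origin (already established in \cite{AdG}) ensure that the contribution is $o_R(1)$ as $R\to\infty$.
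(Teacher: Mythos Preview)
Your approach is essentially the same as the paper's: use the a~priori bound $\nu_1(p)>-1$ for the lower bound, then plug a cut-off rescaling of the limit eigenfunction $\eta_1(s)=4s/(8+s^2)$ (your $\widetilde\phi$ is $\tfrac14\eta_1$) into the variational characterization \eqref{nu-var-1} to get the upper bound. Your identification of $\eta_1$ via the translation kernel of the Liouville linearization and your explicit integrals are correct.

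The one technical difference is the placement of the cut-off. The paper cuts off at a \emph{fixed rescaled radius} $R$, i.e.\ takes $\varphi_p(r)=\eta_1(r/\e_p)\Phi(r/\e_p)$ with $\Phi$ supported in $[0,2R]$. After rescaling, the potential term becomes $\int_0^{2R} s\,W_p\,\eta_1^2\Phi^2\,ds$, and one only needs the $C^0_{\loc}$ convergence $W_p\to e^V$ on the compact set $[0,2R]$; no tail estimate is required at all. You instead cut off at a fixed radius in the \emph{original} variable, which after rescaling forces you to control $\int_R^{1/\e_p} s\,W_p\,\widetilde\phi^2\,ds$ over an unbounded interval. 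As you correctly note, this needs an additional uniform bound of the type $s^2W_p(s)=p(\e_p s)^2|v_p^1(\e_p s)|^{p-1}\le C$. Such an estimate is indeed available (it is the positive-solution analogue of \eqref{stima-uno}, proved e.g.\ in \cite{DIPN=2}), so your argument goes through; but what you called ``pointwise decay estimates on $v_p^1$'' should more precisely be stated as a uniform bound on $p\,t^2|v_p^1(t)|^{p-1}$. The paper's cut-off choice simply sidesteps this extra ingredient.
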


Before proving Proposition \ref{prop:primo-autovalore-1}, which is the core of the present subsection, let us deduce Theorem \ref{teo-morse-1} from it.

{ \begin{proof}[Proof of Theorem \ref{teo-morse-1}]
		The limit \eqref{lim-autov-palla-1-sol-positive}, together with \eqref{stima-primo-autov}, ensures that $ \nu_1(p) \to -1$ from above, so that $	\frac{2+\a}{2} \sqrt{- {\nu}_1(p) } \to   \frac{2+\a}{2} $ from below and then \eqref{tag-2-H-1} gives
		\[ m(u^1_p) \to  2 \left\lceil \frac{2+\a}{2} \right\rceil    - 1  = 1 + 2\left\lceil \frac{\a}{2} \right\rceil  \]
		because the ceiling function is left-continuous. 
		The conclusion follows because the Morse index is a discrete quantity, and therefore it must be definitely equal to its limit.
	\end{proof}

	Proposition \ref{prop:primo-autovalore-1}, in turn, is proved by sending $p\to \infty$ in the Sturm-Liouville problem \eqref{radial-general-H-no-c}, or better into its  rescaled version.
	To enter the details we define
	\begin{align}\label{rescaled-eigenf-1}
	\widetilde \psi_{p}(r):= \begin{cases} \psi_{p}\left(\e_p r\right) &  \text{ for } r\in [0,1/\e_p) \\
	0 & \text{elsewhere.}\end{cases}
	\end{align}
	 for $\e_p$ as in \eqref{eq:epsilon-1}.
Since  $\psi_p\in {\mathcal H}_{0,\rad}$ then $\widetilde \psi_{p} \in \mathcal D_{\rad}$, where 
\begin{equation}\label{def:D}
\mathcal D_{\rad} = D^{1,2}(\R^2) \cap \mathcal L_{\rad}(\R^2).
\end{equation}
	Here  $D^{1,2}(\R^2)$, as usual,  is the closure of $C^{\infty}_0(\R^2)$ under the $L^2$-norm of the gradient, and 
	$\mathcal L_{\rad}(\R^2)$ denotes 
	the space of measurable functions $g: [0,\infty)\to \R$ such that $\int_0^\infty t^{-1}g^2 \ dt<\infty$. 
 $\mathcal D_{\rad} $ has the Hilbert and Banach structure induced by the norm
\[\| \psi\|_{\mathcal D_{\rad}} = \left(\int_0^{\infty} \left(r^{-1}\psi^2 + r|\psi'|^2\right) dr \right)^{\frac{1}{2}}, \]
 and it actually coincides with the closure of $C^{\infty}_{0}(0,\infty)$ under this norm (see  Lemma \ref{lem-Drad} in the Appendix). 

Moreover  $\psi_p$ is an eigenfunction for \eqref{radial-general-H-no-c} related to the eigenvalue $\nu_1(p)<0$ if and only if $\widetilde \psi_{p}$ satisfies 
	\begin{equation}\label{radial-general-H-no-c-resc-1}
	- \left( r\widetilde \psi'\right)'-   r W_p \widetilde\psi = r^{-1} {\nu}_1(p)  \widetilde\psi   \qquad  \text{ as } r\in (0,1/\e_p) \\ 
	\end{equation} 
	Here 
	\begin{equation}\label{W-1}
	W_p(r)= 
	p \e_p^2 \left|v^1_p(\e_p r)\right|^{p-1} = \left|1+\frac{1}{p} \widetilde v_p(r)\right|^{p-1} \qquad   \text{ for } r\in [0,1/\e_p) . 	
	\end{equation} 	
	By the convergence result in \cite{GGP14} recalled in \eqref{limit-z+} as $p\to\infty$ we have 
	\begin{equation}\label{limit-W-1}
	\begin{split}
	W_p(r)\to W^1(r)=e^{V(r)} = \frac{64}{(8+r^2)^2}  & \text{ in } C^0_{\loc}[0,\infty)
	\end{split}\end{equation}
	where  $V$ has been defined in \eqref{V}. Therefore the natural limit problem for \eqref{radial-general-H-no-c-resc-1} is 
	\begin{align}\label{eq:finale-1}
	\begin{cases} -\left(r \phi'\right)'= r \left(W^1+\frac{\beta }{r^2} \right) \phi \quad &   r> 0, \\
\phi \in {\mathcal D}_{\rad} &  
	\end{cases}
	\end{align} 	
	As usual we mean that $\phi$ solves the equation in weak sense, i.e.
		\begin{align}\label{eq:finale-sol}
		\int_0 ^{\infty} r \phi' \varphi' \, dr = \int_0^{\infty} r \left( W^1 +\frac{\beta }{r^2} \right) \phi \varphi \, dr  
				\end{align}
	 	for every $\varphi \in {\mathcal D}_{\rad}$ or equivalently for every $\varphi \in C^{\infty}_0(0,\infty)$.
	 	\\
	Eigenvalues to \eqref{eq:finale-1} are attained in $\mathcal D_\rad$ as far as they are negative. In particular the first  eigenvalue 
	of \eqref{eq:finale-1} is $\beta_1=-1$, which is simple and attained by the function 
		\begin{equation}\label{prima-autof-limite-1}
		\eta_1(r):=\frac{4r}{8+r^2} ,
		\end{equation}
	as shown in \cite[Sec 5]{DIPN=2}. It is 
	the unique negative eigenvalue, see Proposition \ref{prop:beta_1} in the Appendix.

	We are now ready to prove Proposition \ref{prop:primo-autovalore-1}.

\begin{proof}[Proof of Proposition \ref{prop:primo-autovalore-1}]

		Since $\nu_1(p)>-1$ for every $p$ by \eqref{stima-primo-autov}, it suffices to show that $\limsup\limits_{p\to \infty}\nu_1(p) \le  - 1$.  
				We shall prove it by a suitable  choice of the test functions in  the variational characterization  \eqref{nu-var-1} showing that, for every $0<\e<1$ there exists an exponent $p_\e>0$ such that $\nu_1(p)\leq -1+\e$ for $p\geq p_\e$.\\
		Let us take a cut-off function $\Phi\in C^{\infty}_0(0,\infty)$ such that  
		\begin{equation}\label{eq:cut-off}
		 0\le \Phi(r) \le 1 , \quad \left| \Phi'(r)\right|\leq  \frac 2{R}, \quad 
		\Phi(r)=\begin{cases}
		1 & 0\leq r< R , \\
		0 & r>\ {2R} .
		\end{cases}  \end{equation}
		Letting $\e_p$ and $\eta_1$ as defined in \eqref{eq:epsilon-1} and  \eqref{prima-autof-limite-1}, respectively, we set 	
		\begin{align}
		\label{psi-epsilon}
		\varphi_p(r)=\eta_1\left(\frac{r}{\e_p}\right) \Phi \left(\frac r{ \e_p} \right)  , \quad \text{ as } r\in[0,1] .
		\end{align}
			The function $\eta_1$  is decreasing on $\left(2^{\frac{3}{2}}, \infty\right)$ with  $\lim\limits_{r\to \infty}\eta_1(r)=0$, and $\int_0^{\infty} r^{-1} \eta^2_1 dr =1$. So we can choose $R=R(\e)$ in such a way that
			\begin{align}\label{R-e-1}
			\eta_1(r)\le \eta_1(R)<\frac{\e}{4} \quad \text{ for  $r>R$, }\\
			\label{R-e-2}
			\int_0^{\infty} r^{-1} \eta^2_1 \Phi^2 dr \ge\int_0^{R} r^{-1} \eta 2_1  dr  \ge 1-\e /2. 
			\end{align}
		\\
		Notice that since $\e_p\to 0$ we may assume w.l.g.~that $p$ is so large that $1/\e_p > 2R$, so that $\varphi_p\in \mathcal H_{0,\rad}$.
		Inserting the test function $\varphi_p$ into the variational characterization \eqref{nu-var-1} gives
		\begin{align*}
		\nu_1(p) & \le \dfrac{\int_0^1 r\left(|\varphi_p'|^2 -p|v_p|^{p-1}\varphi_p^2 dr\right) dr}{\int_0^1  r^{-1}\varphi^2 dr } ,
		\end{align*}
		next we compute all the terms. \\
		First, since  for every functions $f$ and $g$ we have $\left[\left(fg\right)'\right]^2 = f'\left(fg^2\right)'+ f^2 (g')^2$
		we get
		\begin{align*}\nonumber 
		\int_0^1r |\varphi_p'|^2 dr & =\int_0^1r\left(\eta_1\left(\frac{r}{\e_p}\right)\right)' \left(\eta_1\left(\frac{r}{\e_p}\right) \, \Phi_p^2\left(\frac r{ \e_p} \right)\right)' dr \\ \nonumber
		& \quad + \int_0^1r {\eta_1}^2\left(\frac{r}{\e_p}\right) \left[\left(\Phi_p\left(\frac r{ \e_p} \right)\right)'\right]^2 dr
		\intertext{and rescaling}
		& = \int_0^{\frac1{\e_p}} s\eta_1'(s) \left(\eta_1(s)\Phi^2( s)\right)'ds 
		+ \int_0^{\frac1{\e_p}} s \eta_1^2(s) \left[\left(\Phi (s)\right)'\right]^2 ds
		\end{align*}
		Concerning  the first integral, because $\Phi$ has compact support contained in $[0,2R]$ we have
		\begin{align*} \nonumber
		\int_0^{\frac1{\e_p}} s
		\eta_1'\left(\eta_1\Phi^2\right)' ds
		= \int_0^{+\infty} s \eta_1' \left(\eta_1\Phi^2\right)' ds
		\intertext{and since  $\eta_1$ solves \eqref{eq:finale-1} corresponding to  $\beta_1=-1$ we get} 
		= -\int_0^{+\infty} s^{-1}\eta_1^2\Phi^2 ds + \int_0^{+\infty} s  W^1 \eta_1^2\Phi^2 ds .
		\end{align*}
		Therefore
		\begin{align}	\label{f4}
		\int_0^1r |\varphi_p'|^2 dr & =-\int_0^{+\infty} s^{-1}\eta_1^2\Phi^2 ds + \int_0^{+\infty} s  W^1 \eta_1^2\Phi^2 ds+ \int_0^{\infty} s \eta_1^2 (\Phi')^2 ds
		\end{align}
		
		Next we compute 
		\begin{align}\nonumber
		\int_0^1r p| v_p^1|^{p-1}\varphi_p^2 dr& = \int_0^1 r p|  v_p^1|^{p-1}\left(\eta_1\left(\frac{r}{\e_p}\right) \Phi_p\left(\frac r{ \e_p} \right) \right)^2 dr
		\intertext{rescaling and using the properties of $\Phi$ we get}
		\label{step3} & = \int_0^{\infty} s W_p\eta_1^2\Phi^2 ds  
		\end{align}
		where $W_p$ has been introduced in \eqref{W-1}.
		Similarly 
		\begin{align}\nonumber
		\int_0^1 r^{-1}\varphi_p^2dr &=\int_0^1 r^{-1} \left(\eta_1\left(\frac{r}{\e_p}\right) \Phi_p\left(\frac r{ \e_p} \right) \right)^2 dr \\
		\label{step4} & = \int_0^{\infty} s^{-1} \eta_1^2\Phi^2(s) ds 
		\end{align}
		
		Putting \eqref{f4}, \eqref{step3} and \eqref{step4} into the variational characterization \eqref{nu-var-1} gives
			\begin{align*}
			\nu_1(p) & \le \dfrac{\int_0^1 r\left(|\varphi_p'|^2 -p|v^1_p|^{p-1}\varphi_p^2 \right) dr}
		{\int_0^1  r^{-1}\varphi_p^2 dr } \\
		&=   \dfrac{-\int_0^{\infty} s^{-1} \eta_1^2  \Phi^2  ds + \int_0^{\infty} s (W^1-W_p) \eta_1^2  \Phi^2  ds + \int_0^{\infty} s \eta_1^2 \left(\Phi'\right)^2 ds }
		{\int_0^{\infty}  s^{-1}\eta_1^2 \Phi^2 ds } \\
		& = -1 +  \dfrac{\int_0^{\infty} s (W^1-W_p) \eta_1^2  \Phi^2  ds + \int_0^{\infty} s \eta_1^2 \left(\Phi'\right)^2 ds }
		{\int_0^{\infty}  s^{-1}\eta_1^2 \Phi^2 ds }.
		\end{align*}
		But using the explicit law for $\eta_1$ given in \eqref{prima-autof-limite-1} and the properties of $\Phi$ we have 
			\begin{align*} 
			\int_0^{\frac1{\e_p}} s \eta_1^2(s) \left(\left(\Phi\right)'\right)^2 ds \leq
			\frac{4}{R^2}\int_R^{2R}  s \eta_1^2(s)ds \underset{\eqref{R-e-1}}{<} \frac{\e^2 }{4 R^2}  \int_R^{2R} s\  ds = \frac{3\e^2}{8}  < \frac{3\e}{8} ,
			\end{align*}
		so that	
		\begin{align*}
		\nu_1(p) & < -1 +  \dfrac{\int_0^{\infty} s (W^1-W_p) \eta_1^2  \Phi^2 (s) ds + \frac{3\e}{8}  }
		{\int_0^{\infty}  s^{-1}\eta_1^2 \Phi^2 (s)ds} \\
		& \underset{\eqref{R-e-2}}{<} - 1 + \frac{\int_0^{+\infty} s  \left|W^1 -W_p\right| \eta_1^2\Phi^2 ds+ \frac{3}{8} \e}{1-\e /2} .
		\end{align*}
	On the other hand by the properties of $\Phi$  we have
\begin{align*}
\int_0^{+\infty} s  \left|W^1 -W_p\right| \eta_1^2\Phi^2 ds \le \sup_{(0,2R)}|W^1-W_p|\int_0^{2R}s \eta_1^2 ds 
\end{align*}
and since $W_p\to W^1$ uniformly on $[0,2R]$ we can take $p_{\e}$ in dependence by $\e$ and $R(\e)$ large enough such that 
\[ \sup_{(0,2R)}|W^1-W_p| \le \frac{\e }{8\int_0^{2R}s \eta_1^2 ds }  \quad \text{ for } p> p_{\e}.\]
Eventually we end up with $\nu_1(p) < - 1 + \frac{\e}{ 2-\e} < -1+\e$.

\end{proof}

\subsection{The case of the nodal solution $u_p^2$}\label{sec:3.2}
For the nodal solution  $u^2_p$ problem \eqref{radial-general-H-no-c} has two negative eigenvalues, that will be simply denoted by $\nu_1(p)$ and $\nu_2(p)$ in the following, and satisfy \eqref{stima-autov}.	Therefore to  compute the asymptotic Morse index  according to Proposition \ref{general-morse-formula-H} we need to compute the limit of the two negative eigenvalues  $\nu_1(p)$ and $\nu_2(p)$, and precisely we shall see that:
\begin{proposition}\label{prop-3-5}
	Let $ \nu_1(p)$  and  $\nu_2(p)$ be the radial singular negative eigenvalues of \eqref{radial-general-H-no-c} and let 
	$\kappa $ be as defined in \eqref{kappa}. Then 
	\begin{align}\label{lim-autov-palla-1}
	&\lim_{p\to \infty} \nu_1(p)=
	-\kappa^2 \simeq -26.9
	\\ \label{lim-autov-palla-2}
	&\lim_{p\to \infty} \nu_2(p)=-1	. \end{align}
\end{proposition}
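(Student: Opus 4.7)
The plan is to extend the approach of Proposition \ref{prop:primo-autovalore-1} to the two-scale setting of $u_p^2$. For $i=1,2$, the rescaling $\widetilde\psi(r)=\psi(\e_p^i r)$ transforms \eqref{radial-general-H-no-c} into
\[-(r\widetilde\psi')'-rW_p^i\widetilde\psi=r^{-1}\nu\,\widetilde\psi,\qquad W_p^i(r)=p(\e_p^i)^2|v_p^2(\e_p^i r)|^{p-1}=\left|1+\tfrac{1}{p}\widetilde v_{i,p}(r)\right|^{p-1},\]
and by Proposition \ref{prop-3-2} we have $W_p^1\to e^V$ in $C^0_{\loc}[0,\infty)$ and $W_p^2\to e^{Z_\ell}$ in $C^0_{\loc}(0,\infty)$. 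Two limit eigenvalue problems thus arise on $\mathcal D_{\rad}$: the regular one $-(r\phi')'=r(e^V+\beta r^{-2})\phi$, whose unique negative eigenvalue is $\beta_1=-1$ with eigenfunction $\eta_1$ as in \eqref{prima-autof-limite-1} (Proposition \ref{prop:beta_1}), and its singular analogue $-(r\phi')'=r(e^{Z_\ell}+\beta r^{-2})\phi$. Following the Liouville computation in \cite{DIPN=2} and using the dilation invariance of the family $Z_{\gamma;\delta}$, I would identify $-\kappa^2$ as the first negative eigenvalue of the singular problem, attained by an explicit eigenfunction $\eta^\kappa\in\mathcal D_{\rad}$.

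The upper bounds are obtained by inserting cut-off test functions modeled on \eqref{psi-epsilon} into the variational characterizations \eqref{nu-var-1}--\eqref{nu-var-i}. For $\nu_1(p)$, the choice $\varphi_p(r)=\eta^\kappa(r/\e_p^2)\Phi(r/\e_p^2)$ together with the algebraic identity $[(fg)']^2=f'(fg^2)'+f^2(g')^2$ used in Proposition \ref{prop:primo-autovalore-1}, the uniform convergence $W_p^2\to e^{Z_\ell}$ on compacta of $(0,\infty)$, and $\int_0^\infty r^{-1}(\eta^\kappa)^2\,dr<\infty$, gives $\nu_1(p)\le -\kappa^2+\e$ for $p$ large. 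For $\nu_2(p)$ only the upper bound is needed since \eqref{stima-autov} yields the matching lower bound for free: I take $\widetilde\varphi_p(r)=\eta_1(r/\e_p^1)\Phi(r/\e_p^1)$ exactly as in the positive-solution case and then correct it to $\widetilde\varphi_p-c_p\psi_{1,p}$, where $c_p$ enforces the orthogonality \eqref{orto}. By \eqref{limit-zone} the scales obey $\e_p^1\ll t_{1,p}\ll\e_p^2$, so a test function supported in $[0,2R\e_p^1]$ has negligible $r^{-1}$-weighted overlap with an eigenfunction $\psi_{1,p}$ whose rescaled profile at scale $\e_p^2$ tends to $\eta^\kappa$, which vanishes at the origin. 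Hence $c_p=o(1)$ and the computation of Proposition \ref{prop:primo-autovalore-1} carries through to give $\nu_2(p)\le -1+\e$, yielding \eqref{lim-autov-palla-2}.

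The main obstacle is the matching lower bound $\liminf\nu_1(p)\ge -\kappa^2$, which requires a concentration-compactness argument. I normalize $\psi_{1,p}$ so that $\int_0^1 r^{-1}\psi_{1,p}^2\,dr=1$ and rescale $\widehat\psi_p(r)=\psi_{1,p}(\e_p^2 r)$; testing the rescaled equation against $\widehat\psi_p$ gives
\[\int_0^{1/\e_p^2}r(\widehat\psi_p')^2\,dr=\nu_1(p)+\int_0^{1/\e_p^2}r\,W_p^2\,\widehat\psi_p^2\,dr,\]
and combining the upper bound on $|\nu_1(p)|$ from the previous paragraph with the uniform boundedness of $rW_p^2$ on $(0,1/\e_p^2)$ yields a uniform $\mathcal D_{\rad}$-bound. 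Pass to a weak limit $\widehat\psi_p\rightharpoonup\psi_\infty$ in $\mathcal D_{\rad}$; then $\psi_\infty$ solves the singular limit eigenvalue problem with eigenvalue $\nu_\infty:=\lim\nu_1(p)\le -\kappa^2<-1$, and provided $\psi_\infty\not\equiv 0$ the Rayleigh characterization of $-\kappa^2$ as the first eigenvalue of the singular problem forces $\nu_\infty=-\kappa^2$. The delicate point, and the main technical difficulty, is the nontriviality of $\psi_\infty$: one must exclude that the $L^2(r^{-1}dr)$-mass of $\psi_{1,p}$ concentrates at the finer scale $\e_p^1$ (which would produce, after rescaling at $\e_p^1$, a nontrivial eigenfunction of the regular Liouville limit problem with eigenvalue $\le -\kappa^2<-1=\beta_1$, impossible) or escapes out of any blow-up region (which would force $\nu_\infty=0$, contradicting the upper bound). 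Once nontriviality is established, combining both bounds yields \eqref{lim-autov-palla-1}.
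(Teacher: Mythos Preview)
Your overall strategy matches the paper's: upper bounds via cut-off test functions built from the explicit limit eigenfunctions at the two scales $\e_p^1,\e_p^2$, and a compactness argument at scale $\e_p^2$ for the lower bound on $\nu_1(p)$. Note that your argument for $c_p\to 0$ logically depends on first knowing that $\widetilde\psi_{1,p}^1\to 0$ weakly in $\mathcal D_{\rad}$, which is part of the $\nu_1$ analysis; the paper orders things accordingly, proving Proposition~\ref{prop-3-5-1} in full (including the eigenfunction limits \eqref{lim-autof-1}--\eqref{lim-autof-1-2}) before Proposition~\ref{prop-3-5-2}, and then obtains $a_p\to 0$ by rescaling the orthogonality integral at $\e_p^1$ and invoking \eqref{lim-autof-1} directly, rather than by the scale-separation heuristic you sketch.

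There is a genuine gap in the nontriviality step. First, the uniform bound you claim on $rW_p^2$ is false: for $r\sim\e_p^1/\e_p^2$ one has $rW_p^2(r)\sim\e_p^2/\e_p^1\to\infty$. What is bounded is $r^2W_p^2(r)=f_p(\e_p^2 r)$ with $f_p(s):=ps^2|v_p^2(s)|^{p-1}$, and this estimate (Lemma~\ref{lemma-stima-fp}) is precisely the missing ingredient in your dichotomy. Excluding concentration at $\e_p^1$ via $-\kappa^2<-1$ is correct, but ``escape forces $\nu_\infty=0$'' has no mechanism as stated: if the $r^{-1}$-mass of $\psi_{1,p}$ drifts to the region between the two scales, or beyond $\e_p^2K$, nothing in your argument controls the contribution of the potential there. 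The paper's device is the refined part of Lemma~\ref{lemma-stima-fp}, namely $f_p<2\rho$ on $[\e_p^1R,\e_p^2/K]\cup[\e_p^2K,1]$ for $R,K,p$ large. One writes $-\nu_1(p)\le\int_0^1 r^{-1}f_p\,\psi_{1,p}^2\,dr$, splits $(0,1)$ into the four corresponding pieces, and shows that the second and fourth are at most $2\rho$ each while the first tends to $0$ because $\widetilde\psi_{1,p}^1\to 0$ in $L^2_{\loc}$; since $\limsup\nu_1(p)\le -\kappa^2$, the remaining piece $\int_{1/K}^{K} rW_p^2(\widetilde\psi_{1,p}^2)^2\,dr$ must stay bounded below by $\kappa^2-6\rho>0$, forcing $\widetilde\psi_1^2\neq 0$.
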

Before going on, let us see how Theorem \ref{teo-morse-2} can be easily deduced by Propositions \ref{general-morse-formula-H} and \ref{prop-3-5}
	\begin{proof}[Proof of Theorem \ref{teo-morse-2}]
		Thanks to \eqref{stima-autov} and \eqref{lim-autov-palla-2} one can see as in the proof of Theorem \ref{teo-morse-1} that 
		\[\left\lceil \frac{2+\a}{2} \sqrt{- {\nu}_2(p) }\right\rceil  
		\to 1+ \left\lceil \frac{\a}{2} \right\rceil .\]
		Besides \eqref{lim-autov-palla-1} yields that $\frac{2+\a}{2} \sqrt{- {\nu}_1(p) } \to  \frac{2+\a}{2} \kappa$, and then 
		\[ \left\lceil \frac{2+\a}{2} \sqrt{- {\nu}_1(p) }\right\rceil  
		\to \left\lceil \frac{2+\a}{2}\kappa \right\rceil ,\] provided that $\frac{2+\a}{2}\kappa$ is not integer, that is $\a\neq \a_n$.
		In this case formula \eqref{tag-2-H} yields
		\[m (u^2_p) \to  2 \left\lceil \frac{\a}{2} \right\rceil + 2 \left\lceil \frac{2+\a}{2}\kappa \right\rceil    
		\]
		and the first part of the claim follows since the Morse index is a discrete quantity.
	Otherwise we cannot pass to the limit inside formula \eqref{tag-2-H}, because the ceiling function is not continuous at $\frac{2+\a_n}{2}\kappa$. Nevertheless the just exposed arguments show that 
		\[\left\lceil\frac{2+\a_n}{2}\sqrt{-\nu_1(p)}\right\rceil \in \left\{ \frac{2+\a_n}{2}\kappa, \frac{2+\a_n}{2}\kappa+1\right\}\] for large values of $p$, which concludes the proof. 
	\end{proof}

To prove Proposition \ref{prop-3-5} we begin by taking $\psi_{j,p} \in \mathcal H_{0,\rad}$, the eigenfunctions of \eqref{radial-general-H-no-c} corresponding to    $v_p^2$  and to $\nu_j(p)$ for $j=1,2$ normalized such that	
	 \begin{equation}\label{normalization}
	\int_0^1 r^{-1} \psi_{j,p}\psi_{k,p} dr=\delta_{jk}.
	\end{equation}
	Next, using the notations introduced in Section \ref{sec:2}, 
	for $j=1,2$ we define the rescaled eigenfunctions
	\begin{align}\label{rescaled-eigenf}
	\widetilde \psi_{j,p}^i(r):= \begin{cases}
	\psi_{j,p}\left(\e_{p}^i r\right) & \text{ for }   \frac {t_{i-1,p}}{\e_p^i}< r <\frac {t_{i,p}}{\e_{p}^i}\\
	0 & \text{ elsewhere ,}
	\end{cases}
	\end{align}
	with $\e_{p}^i$ as in \eqref{eq:epsilon-2}, in such a way that	
	\begin{align}
	&\int_0^\infty r^{-1}\left(\widetilde \psi_{j,p}^i\right)^2\ dr=
	\label{eq:riscalata-1}
	\int_{t_{i-1,p}}^{t_{i,p}} r^{-1}\psi_{j,p}^2\ dr  \le\int_0^1 r^{-1}\psi_{j,p}^2\ dr=1\\
	&\int_0^\infty r\left( (\widetilde \psi_{j,p}^i)'\right)^2\ dr=
	\label{eq:riscalata-2}
	\int_{t_{i-1,p}}^{t_{i,p}}r\left(\psi_{j,p}'\right)^2\ dr\le    \int_0^1 r
	\left(\psi_{j,p}'\right)^2\ dr.
	\end{align}

Then the functions $\widetilde \psi_{j,p}^i$ belong to the space $\mathcal D_{\rad}$ introduced in \eqref{def:D}
	and they satisfy
	\begin{equation}\label{eq:riscalata}
	-\left(r (\widetilde\psi_{j,p}^i)'\right)'=r\left(W_{p}^i +\frac{\nu_j(p)}{r^2}\right)\widetilde\psi_{j,p}^i \ \ \text{ as }  \frac {t_{i-1,p}}{\e_{p}^i}< r <\frac {t_{i,p}}{\e_{p}^i}
	\end{equation}
	for 
	\begin{align}\label{rescaled-potential}
	W_{p}^i(r):= & p(\e_{p}^i)^2\left|v_p^2(\e_{p}^ir)\right|^{p-1} =  \left| 1+\frac{\widetilde v_{i,p}(r)}p\right|^{p-1} .
	\end{align}
	Equation \eqref{eq:riscalata} is meant in weak sense, namely
	\begin{equation}\label{eq:riscalata-sol}
	\int_0^{\infty} r (\widetilde\psi_{j,p}^i)' \varphi'= 	\int_0^{\infty} r\left(W_{p}^i +\frac{\nu_j(p)}{r^2}\right)\widetilde\psi_{j,p}^i \varphi \, dr 
	\end{equation}
	for every $\varphi\in C^{\infty}_0(0,\infty)$ 	
	whose support is contained in $\left(\frac {t_{i-1,p}}{\e_{p}^i} , \frac {t_{i,p}}{\e_{p}^i}\right)$.\\
	Proposition \ref{prop-3-2} yields that when $p\to\infty$ 
	 \begin{align}\label{limit-W-1-2}
	 W_{p}^1(r)\to W^1(r)=e^{V(r)} = \frac{64}{(8+r^2)^2}  & \text{ in } C^0_{\loc}[0,\infty) \\
	 \label{limit-W-2-2}
	 W_{p}^2(r)\to W^2(r)=e^{Z_{\gamma;\delta}(r)} =  \frac{2(2+\gamma)^2\delta  r^{\gamma}}{    \left(\delta +r^{2+\gamma}\right)^2} & \text{ in } C^0_{\loc}(0,\infty) 
	 \end{align}
	 where  $V$ and $Z_{\gamma;\delta}$ have been defined in \eqref{V} and \eqref{Z}, respectively and $\gamma$ and $\delta$ are fixed in \eqref{eq:relazioni-gamma-delta-l}. 
	 Therefore the natural limit problems for \eqref{eq:riscalata} are 
	 \begin{align}\label{eq:finale}
	 \begin{cases} -\left(r \psi'\right)'= r \left(W^i+\frac{\beta^i }{r^2} \right) \psi \quad &   r\in(0,\infty), \\
	 \phi\in {\mathcal D}_{\rad}   &\end{cases}
	 \end{align} 	
	 whose weak solutions are meant in the sense of \eqref{eq:finale-sol}. \\
	 
For $i=1$ \eqref{eq:finale} coincides with \eqref{eq:finale-1}  and, as already recalled,  it has only one negative eigenvalue
	$\beta_{1}=-1$ with eigenfunction $	\eta_1$  given by \eqref{prima-autof-limite-1}.
	 Also for $i=2$  there  is only one negative eigenvalue 
	\[\beta^2_{1}=-\kappa^2,\] 
	where $\kappa= \sqrt{\frac{2+\ell^2}{2}}=\frac {2+\g}2$  is the fixed number introduced  in \eqref{kappa}. Such negative eigenvalue is simple and its related eigenfunction is
		\begin{equation}\label{eta-2-1}
		\eta_1^2(r):=\frac {\sqrt {2\kappa \delta} \, r^\kappa}{\delta+r^{2\kappa}} , 
		\end{equation}
	 see Proposition \ref{prop:beta_2} in the Appendix.

\

The proof of Proposition \ref{prop-3-5} is quite long and involved. We divide it in two parts by dealing first with the first eigenvalue  and after with the second one. In doing this we also compute the limits of the rescaled eigenfunctions and show that

\begin{proposition}[First part of Proposition \ref{prop-3-5}]\label{prop-3-5-1}
	Let $\nu_1(p)$ be the first radial singular negative eigenvalue of \eqref{radial-general-H-no-c} corresponding to $v_p^2$. Then as $p\to\infty$ we have 
	\begin{align}
	 \nu_1(p)  &\to -\kappa^2 \simeq  -26.9 , & 
	\tag{\ref{lim-autov-palla-1}} \\
		\label{lim-autof-1} \widetilde\psi_{1,p}^1 &  \to 0   & \mbox{ weakly   in $\mathcal D_{\rad}$ and strongly in $L^{2}_{\loc}(\R^2)$,}
	\intertext{ and, up to an extracted sequence, }
	\label{lim-autof-1-2}
		 \qquad 	\widetilde \psi_{1,p}^2& \to A \,  \eta_1^2 & \mbox{ weakly   in $\mathcal D_{\rad}$ and strongly in $L^{2}_{\loc}(\R^2)$, }
		\end{align}
	for some $A \in \R$,  $A\neq 0$. 	\end{proposition}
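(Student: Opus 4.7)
The plan is to follow the blueprint of Proposition \ref{prop:primo-autovalore-1}, adapted to the second nodal zone, where the first negative eigenvalue now concentrates.

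\textbf{Upper bound via a test function.} Insert into the variational characterization \eqref{nu-var-1} (for $i=2$) the test function $\varphi_p(r) = \eta_1^2(r/\e_p^2)\Phi(r/\e_p^2)$, where $\Phi \in C_0^\infty(0,\infty)$ is a smooth cutoff supported on a compact interval $[r_1,r_2]\subset(0,\infty)$. The support is deliberately bounded away from $r=0$: the convergence \eqref{limit-W-2-2} holds only locally uniformly on $(0,\infty)$ and $W_p^2$ blows up at the origin, because in the second-zone rescaling one has $|1+\widetilde v_{2,p}(0)/p|\to \mu_{1,p}/\mu_{2,p}>1$. Rescaling $s=r/\e_p^2$ and using that $\eta_1^2$ solves \eqref{eq:finale} with $i=2$ and eigenvalue $-\kappa^2$, a computation parallel to that in the proof of Proposition \ref{prop:primo-autovalore-1} yields
\[
\nu_1(p) \le -\kappa^2 + \frac{\int_0^\infty s(W^2-W_p^2)(\eta_1^2)^2\Phi^2\,ds + \int_0^\infty s(\eta_1^2)^2(\Phi')^2\,ds}{\int_0^\infty s^{-1}(\eta_1^2)^2\Phi^2\,ds};
\]
sending first $p\to\infty$ and then $r_1\to 0$, $r_2\to\infty$ (exploiting the rapid decay of $\eta_1^2$ as $s^\kappa$ near zero and as $s^{-\kappa}$ at infinity) gives $\nu_1(p) \le -\kappa^2+o(1)$.

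\textbf{Weak limits of rescaled eigenfunctions.} By \eqref{normalization} and \eqref{eq:riscalata-1}, each $\widetilde\psi_{1,p}^i$ has $L^2(r^{-1}dr)$-norm at most one. Testing the eigenvalue identity against $\psi_{1,p}$ itself gives $\int_0^1 r|\psi_{1,p}'|^2 dr - \int_0^1 r\,p|v_p^2|^{p-1}\psi_{1,p}^2 dr = \nu_1(p)$, and combined with the upper bound just obtained this controls the Dirichlet energy of $\widetilde\psi_{1,p}^i$ in $\mathcal D_\rad$. Extract weak limits $\widetilde\psi_{1,\infty}^i$, up to subsequence, with strong convergence in $L^2_\loc$. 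Passing to the limit in the weak formulation \eqref{eq:riscalata-sol} via the locally uniform convergences \eqref{limit-W-1-2}--\eqref{limit-W-2-2} shows that each $\widetilde\psi_{1,\infty}^i$ is a weak solution of \eqref{eq:finale} with a common eigenvalue $\beta:=\lim_p\nu_1(p)\le-\kappa^2$ along the subsequence.

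\textbf{Identification of the limits.} By Proposition \ref{prop:beta_1} the only negative eigenvalue of \eqref{eq:finale-1} is $-1$; since $\beta\le-\kappa^2<-1$, necessarily $\widetilde\psi_{1,\infty}^1=0$. By Proposition \ref{prop:beta_2} the only negative eigenvalue of \eqref{eq:finale} for $i=2$ is $-\kappa^2$, with one-dimensional eigenspace $\R\eta_1^2$; hence either $\beta=-\kappa^2$ and $\widetilde\psi_{1,\infty}^2=A\,\eta_1^2$ for some $A\in\R$, or $\widetilde\psi_{1,\infty}^2=0$.

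\textbf{Main obstacle: showing $A\neq 0$.} This is the technical heart of the proof. The normalization splits as
\[
\int_0^\infty r^{-1}(\widetilde\psi_{1,p}^1)^2\,dr+\int_0^\infty r^{-1}(\widetilde\psi_{1,p}^2)^2\,dr=1,
\]
but weak convergence in $\mathcal D_\rad$ does not a priori rule out concentration of $L^2(r^{-1}dr)$-mass at $r=0$ or escape of mass to $r=\infty$, in either of the two scalings. Ruling these out requires a tightness argument: in tail regions where the potentials $W_p^i$ are small, the equation is dominated by the Euler-type term $\beta/r^2$, whose solutions $r^{\pm\sqrt{-\beta}}$ force the proper decay and exclude escape at infinity; near $r=0$, the singular structure of \eqref{eq:finale} for $i=2$ (with $\eta_1^2\sim s^\kappa$) combined with the explicit profile of $W_p^i$ excludes concentration at the origin. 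Local Rellich compactness then upgrades the weak convergence to convergence in $L^2(r^{-1}dr)$, so that the normalization forces $\int_0^\infty r^{-1}(\widetilde\psi_{1,\infty}^2)^2\,dr>0$, hence $A\neq 0$. The eigenvalue alternative in the previous step then gives $\lim_p\nu_1(p)=-\kappa^2$.
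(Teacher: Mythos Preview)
Your first three steps (upper bound via a test function built from $\eta_1^2$, extraction of weak limits, and identification via Propositions \ref{prop:beta_1}--\ref{prop:beta_2}) match the paper's approach through Lemmas \ref{lemma-stima-primo-autov}, \ref{lemma-conv-psi} and Remark \ref{remark-dim}, with one minor slip: the Dirichlet-energy bound on $\psi_{1,p}$ does not follow from the \emph{upper} bound $\nu_1(p)\le -\kappa^2+o(1)$ as you claim; it follows from the uniform pointwise estimate $f_p(r)=p\,r^2|v_p^2(r)|^{p-1}\le C$ of \eqref{stima-uno}, exactly as in Lemma \ref{lemma-stime-insieme}.

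The real divergence is at the step $A\neq 0$. Your proposed tightness argument in $L^2(r^{-1}dr)$ is only sketched and has genuine obstacles: controlling mass near $r=0$ and $r=\infty$ \emph{uniformly in $p$} via comparison with Euler solutions $r^{\pm\sqrt{-\beta}}$ would require quantitative decay bounds you do not provide, and the rescaled functions $\widetilde\psi_{1,p}^i$ are truncated at $t_{1,p}/\e_p^i$ (where $\psi_{1,p}$ has no reason to vanish), which undermines any decay-at-the-boundary argument. The paper avoids tightness in the $r^{-1}$-weighted norm altogether and works instead with the \emph{potential-weighted} integral. From the eigenvalue identity and \eqref{normalization},
\[
-\nu_1(p)\le\int_0^1 p\,r\,|v_p^2|^{p-1}\psi_{1,p}^2\,dr=\int_0^1 r^{-1}f_p(r)\,\psi_{1,p}^2\,dr .
\]
The key estimate \eqref{stima-due} of Lemma \ref{lemma-stima-fp} says $f_p$ is uniformly small outside the two windows $[0,\e_p^1 R]$ and $[\e_p^2/K,\e_p^2 K]$. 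The contribution from the first window rescales to $\int_0^R rW_p^1(\widetilde\psi_{1,p}^1)^2dr$, which vanishes because $\widetilde\psi_{1,p}^1\to 0$ in $L^2_{\loc}$; the contribution from the second rescales to $\int_{1/K}^K rW_p^2(\widetilde\psi_{1,p}^2)^2dr$. Since $-\nu_1(p)\ge\kappa^2-o(1)$ by the upper-bound step, this second-window integral must stay bounded below by a fixed positive constant, forcing $\widetilde\psi_1^2\neq 0$ and hence $A\neq 0$ and $\bar\nu_1=-\kappa^2$. This route is shorter and replaces the delicate uniform-decay estimates your sketch would require by the single concrete Lemma \ref{lemma-stima-fp}.
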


\begin{proposition}[Second part of Proposition \ref{prop-3-5}]\label{prop-3-5-2}
	Let $\nu_2(p)$ be the second radial singular negative eigenvalue of \eqref{radial-general-H-no-c} corresponding to $v_p^2$. Then as $p\to\infty$ we have 
	\begin{align}
	 \nu_2(p)&  \to -1 , & 
	\tag{\ref{lim-autov-palla-2}} \\
	\label{lim-autof-2}	
	\widetilde \psi_{2,p}^2& \to 0 	& \mbox{ weakly   in $\mathcal D_{\rad}$ and strongly in $L^{2}_{\loc}(\R^2)$,}
	\intertext{ and, up to an extracted sequence, }
	\label{lim-autof-2-2}	
	\widetilde\psi_{2,p}^1 & \to A \, \eta_1  , & \mbox{ weakly   in $\mathcal D_{\rad}$ and strongly in $L^{2}_{\loc}(\R^2)$,}\end{align}
	for some $A\in \R$,  $A\neq 0$. 
\end{proposition}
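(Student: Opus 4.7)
The strategy is to adapt the template of Proposition \ref{prop:primo-autovalore-1} to the second eigenvalue, where the variational characterization \eqref{nu-var-i} forces the test function to be $\underline{\perp}$ to $\psi_{1,p}$. Since $\nu_2(p)>-1$ by \eqref{stima-autov}, it is enough to prove $\limsup_{p\to\infty}\nu_2(p)\le -1$. I would take the same rescaled cut-off test function as in the positive case, namely $\varphi_p(r):=\eta_1(r/\e_p^1)\Phi(r/\e_p^1)$ with $\eta_1$ the first-zone limit eigenfunction \eqref{prima-autof-limite-1} and $\Phi$ as in \eqref{eq:cut-off}, and then orthogonalize: $\phi_p:=\varphi_p - c_p\,\psi_{1,p}$ with $c_p:=\int_0^1 r^{-1}\varphi_p\,\psi_{1,p}\,dr$. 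The key new fact is that $c_p\to 0$: after rescaling $r=\e_p^1 s$,
\begin{equation*}
c_p=\int_0^{1/\e_p^1} s^{-1}\eta_1(s)\,\Phi(s)\,\widetilde\psi_{1,p}^1(s)\,ds ,
\end{equation*}
which tends to $0$ because $\widetilde\psi_{1,p}^1\rightharpoonup 0$ in $\mathcal{D}_{\rad}$ by Proposition \ref{prop-3-5-1} and $\eta_1\Phi\in\mathcal{D}_{\rad}$ is a fixed test.

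An algebraic expansion exploiting both the eigenvalue identity for $\psi_{1,p}$ and the orthogonality $\phi_p\underline{\perp}\psi_{1,p}$ yields
\begin{equation*}
R^2(\phi_p) = \frac{\int_0^1 r\bigl(|\varphi_p'|^2 - p|v_p^2|^{p-1}\varphi_p^2\bigr)\,dr - c_p^2\,\nu_1(p)}{\int_0^1 r^{-1}\varphi_p^2\,dr - c_p^2}.
\end{equation*}
Since $c_p\to 0$ and $\nu_1(p)\to -\kappa^2$ remains bounded by Proposition \ref{prop-3-5-1}, this Rayleigh quotient has the same limit as $R^2(\varphi_p)$, whose computation is verbatim the one in the proof of Proposition \ref{prop:primo-autovalore-1}: the uniform convergence $W_p^1\to W^1=e^V$ on compacts \eqref{limit-W-1-2}, together with the fact that $\eta_1$ solves \eqref{eq:finale-1} at $\beta_1=-1$, gives $\limsup_p R^2(\varphi_p)\le -1+\e$ for every $\e>0$. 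This proves $\nu_2(p)\to -1$.

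Standard energy estimates from \eqref{eq:riscalata-sol} show that $\widetilde\psi_{2,p}^i$ is bounded in $\mathcal{D}_{\rad}$ for $i=1,2$. I would then extract subsequential weak limits $\phi^i\in\mathcal{D}_{\rad}$, which converge strongly in $L^2_{\loc}$ by Rellich compactness, and pass to the limit in \eqref{eq:riscalata-sol} via \eqref{limit-W-1-2}--\eqref{limit-W-2-2} together with $\nu_2(p)\to -1$, obtaining that $\phi^i$ solves \eqref{eq:finale} with $\beta^i=-1$. In the first zone this forces $\phi^1=A\eta_1$ because $-1$ is the unique negative eigenvalue of \eqref{eq:finale-1} and its eigenspace is simple; in the second zone $\phi^2=0$ since $-1$ is not in the point spectrum of the $i=2$ limit problem (where only $-\kappa^2$ is negative). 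The hard part, and the main obstacle of the argument, will be to prove $A\neq 0$: a weak $\mathcal{D}_{\rad}$ limit can vanish even when the $L^2(r^{-1})$-mass is bounded below, so a mass-escape analysis is required. Following the scheme of Proposition \ref{prop-3-5-1}, I would exploit the ODE structure of \eqref{eq:riscalata} to obtain uniform decay of $\widetilde\psi_{2,p}^1$ outside large compact sets (ruling out mass escape to infinity in the rescaled variable), use the orthogonality $\int_0^1 r^{-1}\psi_{2,p}\psi_{1,p}\,dr=0$ combined with the concentration of $\psi_{1,p}$ on the second-zone scale $\e_p^2$ to show that the $L^2(r^{-1})$-mass of $\psi_{2,p}$ in the first nodal zone is bounded below, and then conclude from the strong $L^2_{\loc}$ convergence of $\widetilde\psi_{2,p}^1\to A\eta_1$ that the constant $A$ cannot vanish.
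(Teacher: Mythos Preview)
Your scheme for the eigenvalue limit and for identifying the weak limits is essentially the paper's: the orthogonalized test function $\phi_p=\varphi_p-c_p\psi_{1,p}$, the fact that $c_p\to 0$ via $\widetilde\psi_{1,p}^1\rightharpoonup 0$, and the algebraic identity for the Rayleigh quotient all appear verbatim in the paper (with $a_p=-c_p$). One small point you gloss over: the rescaling identity for $c_p$ only involves $\widetilde\psi_{1,p}^1$ because, for $p$ large, the support of $\Phi$ already lies inside $[0,t_{1,p}/\e_p^1)$; the paper splits $\int_0^1=\int_0^{t_{1,p}}+\int_{t_{1,p}}^1$ and observes the second integral is zero. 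The identification $\phi^2=0$, $\phi^1=A\eta_1$ is exactly as in the paper, via Lemma~\ref{lemma-conv-psi} and Remark~\ref{remark-dim}.

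Where your proposal has a genuine gap is the proof that $A\neq 0$. Your suggested route---uniform ODE decay of $\widetilde\psi_{2,p}^1$ at infinity, plus the orthogonality $\int_0^1 r^{-1}\psi_{1,p}\psi_{2,p}=0$ together with concentration of $\psi_{1,p}$ on the second scale---does not lead anywhere obvious: the orthogonality constrains the \emph{second}-zone behaviour of $\psi_{2,p}$, not the first, and we already know $\widetilde\psi_{2,p}^2\to 0$, so no lower bound on the first-zone mass comes out of it. The paper's argument is different and sharper. It does not control the $L^2(r^{-1})$ mass at all; instead it uses the eigenvalue identity
\[
-\nu_2(p)\le \int_0^1 rp|v_p^2|^{p-1}\psi_{2,p}^2\,dr
\]
and splits this integral over the four regions $[0,\e_p^1 R]$, $[\e_p^1 R,\e_p^2/K]$, $[\e_p^2/K,\e_p^2 K]$, $[\e_p^2 K,1]$. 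The two ``transition'' pieces are small by Lemma~\ref{lemma-stima-fp} (the key estimate $f_p(r)=pr^2|v_p^2|^{p-1}\le 2\rho$ between scales), the piece on the second scale is small because $\widetilde\psi_{2,p}^2\to 0$ in $L^2_{\loc}$, and the remaining piece on the first scale rescales to $A^2\int_0^R sW^1\eta_1^2\,ds+o(1)$. Since $-\nu_2(p)\to 1$, this forces $A\neq 0$. You should replace your vague mass-escape discussion by this energy-splitting, for which Lemma~\ref{lemma-stima-fp} is the essential input.
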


The present line of reasoning has many similarities with the one used in \cite{DIPN=2} for the Lane-Emden equation. Indeed Proposition \ref{prop-3-5-1} represents a slight generalization of their arguments, even though the proof that we are going to present  directly uses the singular problems \eqref{radial-general-H-no-c} instead of approximating them with regular Sturm-Liouville problems in collapsing annuli.
Proposition \ref{prop-3-5-2}, instead, is completely new since in the Lane-Emden equation ($\a=0$) the estimate \eqref{stima-autov} is sufficient to see that $\lceil\sqrt{-\nu_2(p)}\rceil = 1$ for any value of $p$ and therefore the contribution of the second eigenvalue to the Morse index is constant.

First we need some estimates that we introduce in a series of lemmas.
As a preliminary we define the function
\begin{equation}\label{def:f}
f_p(r):=p\, r^2|v_p^2(r)|^{p-1}, \quad   0\leq r<1
\end{equation}
and prove some useful properties that descend by the convergence stated in Proposition \ref{prop-3-2} and improve  \cite[Proposition 6.10]{DIPN=2}.

\begin{lemma}\label{lemma-stima-fp} 
	We have \begin{equation}\label{stima-uno}
f_p(r) = p r^2|v_p^2(r)|^{p-1}\leq C \ \text{ for any } r\ge 0 \text{ and } p>1.
\end{equation}
Moreover for any $\rho>0$ there exist $R(\rho)>1$, $K(\rho) >1$ and $p(\rho)>1$ such that for any $R\ge R(\rho)$, $K>K(\rho)$ and $p\geq p(\rho)$
	 \begin{equation}\label{stima-due}
	\max \left\{ f_p(r) \, : \,  r\in [\e_p^1 R,{\e_p^2}/K]\cup [\e_p^2K,1] \right\}\leq 2\rho
	\end{equation}
	where 
	$\e_p^1$ and $\e_p^2$ are as defined in in \eqref{eq:epsilon-2}.
\end{lemma}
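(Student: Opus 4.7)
The proof reduces to the Lane-Emden case---since equation \eqref{LE-radial} is independent of $\alpha$---and refines the analogous statement in \cite{DIPN=2}. The key algebraic ingredient is the identity
\begin{equation*}
f_p(\e_p^i s)=s^2\,W_p^i(s),\qquad i=1,2,
\end{equation*}
obtained by inserting \eqref{eq:epsilon-2}--\eqref{eq:v-tilde-2} into \eqref{def:f}, together with the local uniform convergences \eqref{limit-W-1-2}--\eqref{limit-W-2-2} of $W_p^1$ to $e^{V}$ and of $W_p^2$ to $e^{Z_\ell}$. The two limit profiles
\begin{equation*}
s^2 e^{V(s)}=\tfrac{64\, s^2}{(8+s^2)^2},\qquad s^2 e^{Z_\ell(s)}=\tfrac{2(2+\g)^2\d\, s^{2+\g}}{(\d+s^{2+\g})^2}
\end{equation*}
are bounded on $[0,\infty)$ and vanish as $s\to+\infty$; the second one also vanishes as $s\to 0^+$. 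These qualitative decay features are the analytic core of both claims in the statement.

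For the uniform bound \eqref{stima-uno}, I would partition $[0,1]$ into five intervals matched to the bubble structure: the two bubble cores $[0,\e_p^1 R_0]$ and $[\e_p^2/K_0,\e_p^2 K_0]$; the ``nodal bridge'' $[t_{1,p},\e_p^2/K_0]$, which by \eqref{limit-zone} rescales by $\e_p^2$ into a fixed compact subset of $(0,1/K_0]$; and the two transition tails $[\e_p^1 R_0,t_{1,p}]$ and $[\e_p^2 K_0,1]$. On the bubble cores and on the bridge, the rescaling identity together with $C^0_{\loc}$ convergence of $W_p^i$ yields $f_p\le C$ directly, by the boundedness of the two limit profiles. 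On the transition tails, one uses ODE estimates in the spirit of \cite{DIPN=2} and \cite{GGP14}: the monotonicity of $|v_p^2|$ in each nodal zone (decreasing in $[0,t_{1,p}]$ and in $[s_{2,p},1]$), combined with the vanishing of $v_p^2$ at $t_{1,p}$ and at $r=1$ plus the decay of $v_p^2$ in the outer region of each bubble, is what gives the uniform bound.

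The smallness claim \eqref{stima-due} is the quantitative counterpart. Given $\rho>0$, one chooses $R(\rho)$ so that $s^2 e^{V(s)}<\rho$ for all $s\ge R(\rho)$, and $K(\rho)$ so that $s^2 e^{Z_\ell(s)}<\rho$ both for $s\ge K(\rho)$ and for $s\le 1/K(\rho)$. Then the target set $[\e_p^1 R,\e_p^2/K]\cup[\e_p^2 K,1]$ is contained in the union of the three ``non-bubble'' intervals above: on their compact rescaled cores, $C^0_{\loc}$ convergence of $W_p^i$ upgrades $\rho$ to $2\rho$ for $p\ge p(\rho)$, while on the outer portions one re-uses the tail control developed for \eqref{stima-uno}.

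The main obstacle is precisely the transition tail $[\e_p^1 R_0,t_{1,p}]$, because there $|v_p^2|$ ranges from values close to $\mu_{1,p}>1$ down to $0$---crossing the critical value $1$ at which $|v_p^2|^{p-1}$ is hardest to control---while the rescaled variable $s=r/\e_p^1$ diverges as $p\to\infty$, so that neither the plain monotonicity of $|v_p^2|$ nor the local uniform convergence of $W_p^1$ suffices by itself. The way around is to exploit the decay of $v_p^2$ outside the bubble core (as can be extracted from the asymptotic analysis of \cite{GGP14}), strong enough that $|v_p^2(r)|^{p-1}$ absorbs the factor of $p$ in the definition of $f_p$; an entirely analogous argument closes the outer transition tail $[\e_p^2 K_0,1]$, using the Dirichlet condition $v_p^2(1)=0$. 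The boundedness of $\mu_{i,p}$ from \eqref{eq:massimi-pcritico-v} and the well-separation $\e_p^1\ll\e_p^2\ll 1$ of the two bubble scales are used throughout.
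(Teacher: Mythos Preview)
Your setup is correct: the rescaling identity $f_p(\e_p^i s)=s^2 W_p^i(s)$ and the choice of $R(\rho)$, $K(\rho)$ from the decay of the limit profiles $s^2 e^{V(s)}$ and $s^2 e^{Z_\ell(s)}$ match the paper exactly. The divergence is in how the transition tails are handled, and that is where your argument has a gap.

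The paper does \emph{not} control the tails via decay estimates on $|v_p^2|$. Instead it invokes a structural ODE fact about $f_p$ itself (\cite[Lemmas 6.7 and 6.9]{DIPN=2}; see also \cite[Lemma 2.11]{AG-N3}): $f_p$ has exactly one interior maximum $c_p$ in $(0,t_{1,p})$ and one interior maximum $d_p$ in $(t_{1,p},1)$, and is monotone on either side of each. The $C^0_{\loc}$ convergence of $s\mapsto f_p(\e_p^1 s)$ and $s\mapsto f_p(\e_p^2 s)$ to their unimodal limits then forces $c_p\in[0,\e_p^1 R]$ and $d_p\in[\e_p^2/K,\e_p^2 K]$ for $p$ large. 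Hence on each of the three intervals in \eqref{stima-due} the function $f_p$ is monotone, and its maximum is attained at an endpoint already known to be below $2\rho$. This closes \eqref{stima-due} in one line; \eqref{stima-uno} is simply quoted from \cite[(2.15)]{DIPN=2}.

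Your alternative --- monotonicity of $|v_p^2|$ plus pointwise decay strong enough that $|v_p^2(r)|^{p-1}$ absorbs the factor $pr^2$ --- is plausible in spirit but not carried out, and it is the harder road. On $[\e_p^1 R,t_{1,p}]$ the value $|v_p^2|$ crosses $1$ while $r/\e_p^1\to\infty$, so neither monotonicity of $|v_p^2|$ nor compact convergence of $W_p^1$ covers the whole interval; you would need a uniform bound of the type $p\,r^2|v_p^2(r)|^{p-1}\le C\,R^2 e^{V(R)}$ there, which is essentially equivalent to what the unimodality of $f_p$ gives for free. As written, the tail step is an assertion rather than a proof; the missing ingredient is precisely the single-peak structure of $f_p$ in each nodal zone.
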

\begin{proof}
\eqref{stima-uno} has been obtained in \cite[(2.15)]{DIPN=2}.
	As for \eqref{stima-due},  it can be proved following the line of \cite[Lemma 2.11]{AG-N3}.
Let 
	\[h(s):=W^2(s)s^2=\frac{2(\gamma+2)^2\delta s^{\gamma+2}}{\left(\delta+s^{\gamma+2}\right)^2}\ \text{ and }\  g(s):=W^1(s)s^2=  \frac{64 s^{2}}{\left(8+s^{2}\right)^2}  \]
	where $W^1$ and $W^2$ are as defined in \eqref{limit-W-1-2} and \eqref{limit-W-2-2}.
	For every given $\rho>0$ we choose $K=K(\rho)>1$ such that 
	$h(\frac 1K)<\rho$ and $h(K)<\rho$ and $R=R(\rho)>0$ such that $g(R)<\rho$. This is possible since $h(s)\to 0$ as $s\to 0$,  and $h(s) , g(s)\to 0$ as $s\to\infty$. We let
	\[h_p(s):=f_p(\e_p^2s)=W_{p}^2(s)s^2\ \text{ and }\  g_p(s):=f_p(\e_p^1s)=W_p^1(s)s^2\]
 with $W^i_p$ as in \eqref{rescaled-potential}.
	The convergences in \eqref{limit-W-1-2} and \eqref{limit-W-2-2} imply that 
	$h_p(s)\to h(s)$ uniformly in $[1/K,K]$ and also
	\[f_p\left(\frac {\e_p^2}K\right)= h_p\left(\frac 1K\right)\leq h\left(\frac 1K\right)+\rho<2\rho\]
	\[f_p(\e_p^2K)=h_p(K)\leq h( K)+\rho<2\rho\]
	if $p$ is large enough. Moreover $g_p(s)\to g(s)$ uniformly in $[0,R]$ and also \[f_p(\e_p^1R)=g_p(R)\leq g( R)+\rho<2\rho\]
	if $p$ is large enough.
	\\
			In\cite[Lemma 6.7 and 6.9]{DIPN=2} (see also\cite[Lemma 2.11]{AG-N3})  it is proved that the function $f_p(r)$ has an unique maximum point in each nodal zone of $v_p$, precisely there are $0<c_p<t_{1,p}<d_p<1$ such that $f_p$ is strictly increasing in $(0,c_p)$ and in $(t_{1,p}, d_p)$, while it is strictly decreasing in $(c_p, t_{1,p})$ and in $(d_p, 1)$.\\
Further the convergence of $g_p$ to $g$ in $C^0_{\loc}[0,\infty)$  implies that $c_p\in [0,\e_p^1R]$ if $p$ is large enough, as well as the convergence of 
		$h_p$ to $h$ in $C^0_{\loc}(0,\infty)$ implies that $d_p\in [\frac{\e_p^2}K,\e_p^2K]$. 
		Then the monotonicity properties of $f_p$ yield 
		\begin{align*}
		f_p(r)& <f_p(\e_p^1R)<2\rho \ &  \text{ when } \ r\in [\e_p^1R, t_{1,p}] , \\
		f_p(r) & <f_p \left({\e_p^2}/K\right)<2\rho\ & \text{ when  } \ r\in [t_{1,p},{\e_p^2}/K], \\
		f_p(r)& <f_p(\e_p^2K)<2\rho\ & \text{ when  } \ r\in [\e_p^2K,1]
		\end{align*}
	when $p$ is large enough. This concludes the proof.
\end{proof}

Taking advantage from \eqref{stima-uno} it is not hard to obtain some general estimates, precisely the eigenvalues are bounded and the rescaled eigenfunctions are bounded in $\mathcal D_{\rad}$.

\begin{lemma}\label{lemma-stime-insieme}
	There exist $\bar p >0$ and $C>0$ such that 	for every $p\geq \bar p$ we have 
	\begin{align}\label{eq-stima-autov} 
		-C \le \nu_1(p) <  \nu_2(p)<0
\\ 
\label{eq-stima-norma}	
 \int_0^{\infty} r ((\widetilde\psi_{j,p}^i)')^2\, dr \le C 
\end{align}
for every $i,j=1,2$.
\end{lemma}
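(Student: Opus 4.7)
The upper chain $\nu_1(p)<\nu_2(p)<0$ is already part of the statement \eqref{stima-autov}, so the only genuine content of \eqref{eq-stima-autov} is the lower bound on $\nu_1(p)$, and the whole work for \eqref{eq-stima-norma} is to bound $\int_0^1 t(\psi_{j,p}')^2\,dt$ since \eqref{eq:riscalata-2} immediately transfers such a bound to the rescaled functions. My plan is to exploit the uniform $L^{\infty}$ bound on $f_p$ given by \eqref{stima-uno} to gain control on the ``potential term'' $p|v_p^2|^{p-1}$ in a weighted sense, and then plug it both into the variational characterization \eqref{nu-var-1} and into the weak form \eqref{radial-general-weak-H-no-c}.

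First, rewrite $p|v_p^2(r)|^{p-1}=r^{-2}f_p(r)$. Then for any $\phi\in\mathcal H_{0,\rad}$, by Lemma \ref{lemma-stima-fp},
\[\int_0^1 r\,p|v_p^2|^{p-1}\phi^2\,dr=\int_0^1 r^{-1}f_p(r)\phi^2\,dr\le C\int_0^1 r^{-1}\phi^2\,dr.\]
Inserting this into the Rayleigh quotient $\mathcal R^2(\phi)$ from \eqref{nu-var-1} yields $\mathcal R^2(\phi)\ge -C$, so taking the infimum gives $\nu_1(p)\ge -C$ uniformly in $p$, which is \eqref{eq-stima-autov}.

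Next, I would test the weak equation \eqref{radial-general-weak-H-no-c} with $\varphi=\psi_{j,p}$ itself. Using the normalization \eqref{normalization} and again $p|v_p^2|^{p-1}=r^{-2}f_p$, this gives the identity
\[\int_0^1 r(\psi_{j,p}')^2\,dr=\nu_j(p)+\int_0^1 r^{-1}f_p\,\psi_{j,p}^2\,dr.\]
Since $\nu_j(p)<0$, using \eqref{stima-uno} and \eqref{normalization} again I obtain $\int_0^1 r(\psi_{j,p}')^2\,dr\le C$, uniformly in $p$ and $j=1,2$. Combining this with \eqref{eq:riscalata-2} immediately gives \eqref{eq-stima-norma} for all $i,j=1,2$.

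There is no serious obstacle here: everything reduces to having $f_p$ uniformly bounded on $[0,1)$, which is exactly what Lemma \ref{lemma-stima-fp} provides. The only mildly subtle point is to remember to absorb the $p|v_p^2|^{p-1}$ term into the weight $r^{-1}$ (rather than trying to estimate it pointwise), so that the normalization \eqref{normalization} and the integral against $r^{-1}$ take care of it automatically in both estimates.
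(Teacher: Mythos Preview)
Your proof is correct and follows essentially the same route as the paper's: both reduce everything to the uniform bound $f_p\le C$ from \eqref{stima-uno}, use it to control the potential term against the weight $r^{-1}$, and then test the weak equation with $\psi_{j,p}$ itself. The only cosmetic difference is that for the lower bound on $\nu_1(p)$ you bound the Rayleigh quotient from below for all test functions, whereas the paper plugs in the eigenfunction $\psi_{1,p}$ directly; the two are equivalent here.
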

\begin{proof}
Using $\psi_{j,p}$ as a test function in \eqref{radial-general-weak-H-no-c} gives
\begin{align}\label{line}
\begin{split}\int_0^1 r\left(\psi_{j,p}'\right)^2& =\int_0^1 r \left( p|v_p^2|^{p-1} +\frac{\nu_j(p)}{r^2} \right)\psi_{j,p}^2 dr \\ &= 
\int_0^1 r^{-1} \left(f_p +\nu_j(p)\right) \psi_{j,p}^2 dr .\end{split}
\end{align}
where $f_p$ is defined in \eqref{def:f}. 
Taking advantage from \eqref{normalization} one can extract $\nu_1(p)$  getting that 
\[
\begin{split}
\nu_1(p) & =\int_0^1 r \left(\psi_{1,p}'\right)^2-r^{-1}
f_p \, \psi_{1,p}^2\, dr\geq-\sup_{r\in (0,1)}f_p(r)
\int_0^1r^{-1}\psi_{1,p}^2 \, dr= -C\end{split}\]
for $p$ large enough, by \eqref{stima-uno}.
Besides, since $\nu_j(p)<0$  for $j=1,2$ by  \eqref{stima-autov}, \eqref{line} also yields that
\[
\int_0^1 r\left(\psi_{j,p}'\right)^2
<  \int_0^1 r^{-1}f_p\psi_{j,p}^2\, dr \le \sup_{r\in(0,1)}f_p(r) \int_0^1 r^{-1}\psi_{j,p}^2\, dr = C .
\]
So also \eqref{eq-stima-norma} is proved, recalling \eqref{stima-uno} and \eqref{normalization}.
\end{proof}	

\begin{lemma}\label{lemma-conv-psi} 
	Let $\widetilde \psi_{j,p}^{i}$ be as defined in \eqref{rescaled-eigenf} for $i,j=1,2$
	and $p_n$ a sequence with $p_n\to \infty$. Then, 
	there exist an extracted sequence (that we still denote by $p_n$), a number $\bar \nu_j\leq 0$
and a function $\widetilde \psi_j^i\in  \mathcal D_{\rad}$ which is a  weak solution to \eqref{eq:finale} with $\beta^i$ substituted by $\bar \nu_j$ such that 
	\[
	\widetilde \psi_{j,p}^{i}\to \widetilde\psi_{j}^{i}  \quad \mbox{weakly in $\mathcal D_{\rad}$ and strongly in $L^{2}_{\loc}(\R^2)$}
	\]
as $p\to\infty$.
\end{lemma}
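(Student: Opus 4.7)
The strategy is to extract weak limits in the Hilbert space $\mathcal D_{\rad}$ via uniform bounds, upgrade to strong $L^2_{\loc}$ convergence by a Rellich/diagonal argument, and then pass to the limit in the weak formulation \eqref{eq:riscalata-sol} of the rescaled eigenvalue problem.

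First, I would collect the a priori bounds. From \eqref{eq:riscalata-1} we have $\int_0^\infty r^{-1}(\widetilde\psi_{j,p}^i)^2\, dr \le 1$, and from Lemma \ref{lemma-stime-insieme}, $\int_0^\infty r((\widetilde\psi_{j,p}^i)')^2\, dr \le C$. Hence $\{\widetilde\psi_{j,p_n}^i\}$ is bounded in the Hilbert space $\mathcal D_{\rad}$; moreover, again by Lemma \ref{lemma-stime-insieme}, $\nu_j(p_n) \in [-C,0)$. By Banach--Alaoglu and the Bolzano--Weierstrass theorem I extract a (not relabeled) subsequence along which $\widetilde\psi_{j,p_n}^i \rightharpoonup \widetilde\psi_j^i$ weakly in $\mathcal D_{\rad}$ and $\nu_j(p_n)\to \bar\nu_j \le 0$.

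Next, I would promote weak to strong convergence in $L^{2}_{\loc}(\R^2)$. On any annulus $\{a\le |x|\le b\}$, $0<a<b<\infty$, the bound on $\int_a^b (r|\psi'|^2 + r^{-1}\psi^2)\,dr$ yields a uniform bound in $H^1([a,b])$ of the radial profile, so by the one-dimensional Rellich theorem we obtain strong convergence in $L^2([a,b], r\, dr)$, i.e.\ strong convergence in $L^2$ of the annulus. A standard diagonal extraction gives strong convergence on every set $\{\epsilon\le |x|\le 1/\epsilon\}$; to include a neighbourhood of the origin I use the one-sided Hardy-type bound
\[
\int_{|x|\le \epsilon}(\widetilde\psi_{j,p_n}^i)^2\,dx \;=\; 2\pi \int_0^{\epsilon} r(\widetilde\psi_{j,p_n}^i)^2\,dr \;\le\; 2\pi\,\epsilon^{2}\int_0^{\epsilon} r^{-1}(\widetilde\psi_{j,p_n}^i)^2\,dr \;\le\; C\epsilon^{2},
\]
which is uniform in $n$ and goes to $0$ with $\epsilon$. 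Combining, $\widetilde\psi_{j,p_n}^i \to \widetilde\psi_j^i$ strongly in $L^2_{\loc}(\R^2)$ and $\widetilde\psi_j^i\in\mathcal D_{\rad}$.

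Finally, I would pass to the limit in \eqref{eq:riscalata-sol} to identify the limit as a weak solution to \eqref{eq:finale} with $\beta^i=\bar\nu_j$. Fix $\varphi\in C^{\infty}_{0}(0,\infty)$, say $\mathrm{supp}\,\varphi\subset[a,b]\subset(0,\infty)$. Since $t_{1,p}/\e_p^1\to\infty$ and $t_{1,p}/\e_p^2\to 0$, $1/\e_p^2\to\infty$ by \eqref{zeri-pcritico}, for $p$ large $[a,b]$ lies inside $(t_{i-1,p}/\e_p^i, t_{i,p}/\e_p^i)$, so $\varphi$ is an admissible test function in \eqref{eq:riscalata-sol}. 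The gradient term converges by weak convergence in $\mathcal D_{\rad}$ (equivalently: the Hilbert-scalar-product convergence minus the lower-order term, which converges by the strong $L^2_{\loc}$ bound since $r^{-1}\varphi$ is bounded with compact support in $(0,\infty)$). The potential term converges because $W_p^i\to W^i$ in $C^0_{\loc}(0,\infty)$ by \eqref{limit-W-1-2}--\eqref{limit-W-2-2}, combined with strong $L^2_{\loc}$ convergence of $\widetilde\psi_{j,p_n}^i$ on the compact set $[a,b]$. Lastly, $\nu_j(p_n)\int_0^\infty r^{-1}\widetilde\psi_{j,p_n}^i\varphi\, dr \to \bar\nu_j \int_0^\infty r^{-1}\widetilde\psi_j^i\varphi\, dr$ for the same reason. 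This yields the weak equation \eqref{eq:finale-sol} for $\widetilde\psi_j^i$ with $\beta^i$ replaced by $\bar\nu_j$.

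The only delicate point is the handling of the behaviour near $r=0$: both the weight $r^{-1}$ in the norm and the singular term in the equation must be controlled uniformly, and the test functions $\varphi\in C^{\infty}_{0}(0,\infty)$ must be shown to be eventually admissible in \eqref{eq:riscalata-sol}. Both are handled by the Hardy-type inequality above together with the asymptotic location of the nodal zones provided by Proposition \ref{prop-3-2}. No information on $\bar\nu_j>-\infty$ or on the non-triviality of $\widetilde\psi_j^i$ is asserted here; those will be extracted in the subsequent identification of the limits carried out in Propositions \ref{prop-3-5-1} and \ref{prop-3-5-2}.
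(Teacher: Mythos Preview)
Your proposal is correct and follows essentially the same approach as the paper: extract a weak limit in $\mathcal D_{\rad}$ from the uniform bounds of Lemma~\ref{lemma-stime-insieme}, upgrade to strong $L^2_{\loc}$ convergence, and pass to the limit in \eqref{eq:riscalata-sol} using that the rescaled nodal intervals invade $(0,\infty)$ together with the uniform convergence $W_p^i\to W^i$. The only notable difference is that you spell out the $L^2_{\loc}$ compactness via Rellich on annuli plus a Hardy-type tail bound at the origin, whereas the paper simply invokes the weak-to-strong $L^2(B_R)$ compactness (which follows directly since the $\mathcal D_{\rad}$ bound gives a uniform $H^1(B_R)$ bound on every ball). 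One minor remark: the asymptotics you need for the nodal zones are those of the Lane--Emden quantities $t_{1,p}/\e_p^i$, stated in \eqref{limit-zone}, rather than \eqref{zeri-pcritico}, which concerns the H\'enon quantities $r_p/\rho_p^i$; the two are of course equivalent via \eqref{transformation-henon-no-c}.
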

\begin{proof}
By \eqref{eq-stima-autov} it is clear that there is an extracted sequence $\nu_j(p_n)\to \bar \nu_j\leq 0$. Moreover estimate \eqref{eq-stima-norma} implies that $\widetilde \psi_{j,p}^i$ are uniformly bounded in $\mathcal D_{\rad}$ for $i,j=1,2$.
 Then, up to another extracted subsequence 
 	\[
	\begin{split}
	\widetilde \psi_{j,p_n}^i\to \widetilde\psi_j^i  & \text{ weakly in }  \mathcal D_{\rad} \\
	\widetilde \psi_{j,p_n}^i\to \widetilde\psi_j ^i & \text{ strongly in }  L^{2}(B_R)  \ \forall \  R>0\\
	\widetilde \psi_{j,p_n}^i\to \widetilde\psi_j ^i & \text{ almost everywhere in }\R^2.
	\end{split}\]
In particular $\widetilde\psi_j^i\in {\mathcal D}_{\rad}$ 
	and taking advantage from the fact that the sets $(t_{i-1,p}/\e^i_{p} , t_{i,p}/\e^i_{p})$ invade $(0,\infty)$ by \eqref{limit-zone}, for every $\varphi\in C^{\infty}_0(0,\infty)$ we can choose $n$ so large in such a way that $supp\  \varphi\subset (t_{i-1,p_n}/\e^i_{p_n} , t_{i,p_n}/\e^i_{p_n})$ and $\widetilde \psi_{j,p}^i$ solves
	\[\int_0^\infty r(\widetilde \psi_{j,p}^i)'\varphi' \ dr=\int_0^\infty r W^i_p\widetilde \psi_{j,p}^i\varphi\ dr+\nu_j(p)\int_0^\infty r^{-1}\widetilde \psi_{j,p}^i\varphi\ dr\]
	The weak convergence in $\mathcal D_\rad$ then implies that 
	\[\begin{split}
	& \int_0^\infty r(\widetilde \psi_{j,p}^i)'\varphi' \ dr\to \int_0^\infty r(\widetilde \psi_{j}^i)'\varphi' \ dr\\
	& \int_0^\infty r^{-1}\widetilde \psi_{j,p}^i\varphi\ dr\to \int_0^\infty r^{-1}\widetilde \psi_{j}^i\varphi\ dr
	\end{split}\]
	while the strong convergence in $L^2_{\loc}(B_R)$ and the fact that $W_p^i\to W^i$ in $C^1_{\loc}(0,\infty)$ implies also that 
	\[\int_0^\infty r W^i_p\widetilde \psi_{j,p}^i\varphi\ dr\to \int_0^\infty r W^i\widetilde \psi_{j}^i\varphi\ dr\]
	getting that $\widetilde\psi_j ^{i}$ solves \eqref{eq:finale} in weak sense.
\end{proof}

\begin{remark}\label{remark-dim}
Since the  negative eigenvalues and eigenfunctions of the limit problem \eqref{eq:finale} are known, an immediate consequence of  Lemma \ref{lemma-conv-psi} is that or $\bar \nu_j = -\kappa^2$, $-1$, or $0$ or, else,  $\widetilde\psi^i_j=0$ for $i=1,2$.
Precisely if $\widetilde\psi_j^1\neq0$ then $\bar\nu_j=-1$ or $0$, and similarly if $\widetilde\psi_j^2\neq0$ then $\bar\nu_j=-\kappa^2$ or $0$.
\\
\end{remark}	

	For what concerns the first eigenvalue, the general estimate \eqref{stima-autov} forbids $\bar \nu_1=0$. Next Lemma shows that neither $\bar\nu_1=-1$ is possible
because the limit of first eigenvalue cannot overpass the lowest eigenvalue among the limit problems, which now is $\beta^2_1=-\kappa^2$.

\

\begin{lemma}\label{lemma-stima-primo-autov} 
	We have $\limsup\limits_{p\to \infty} \nu_1(p)\leq  -\kappa^2$.
\end{lemma}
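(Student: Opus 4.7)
The plan is to mimic the proof of Proposition \ref{prop:primo-autovalore-1}, using the variational characterization \eqref{nu-var-1} of $\nu_1(p)$ with a test function built from the first eigenfunction $\eta_1^2$ of the limit problem \eqref{eq:finale} with $i=2$, rescaled by $\e_p^2$. Since the associated eigenvalue is $\beta_1^2=-\kappa^2$, one expects the Rayleigh quotient along such a test function to be close to $-\kappa^2$, and therefore to force $\nu_1(p)\le -\kappa^2+\e$ for any prescribed $\e>0$ provided $p$ is large enough.

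The main difference with Proposition \ref{prop:primo-autovalore-1} is that the convergence $W_p^2\to W^2$ in \eqref{limit-W-2-2} is only locally uniform on $(0,\infty)$ and not up to the origin (as $W^2(r)\sim r^{\g}$ vanishes at $0$), so the cutoff has to vanish near both $0$ and $+\infty$. Concretely, given $\e\in(0,1)$ I pick $\Phi\in C^\infty_0(0,\infty)$ such that $0\le \Phi\le 1$, $\Phi\equiv 1$ on $[1/M,M]$ and $\Phi\equiv 0$ outside $[1/(2M),2M]$. Since $\eta_1^2(s)\sim s^\kappa$ at the origin and $\eta_1^2(s)\sim s^{-\kappa}$ at infinity, and since the substitution $u=s^{2\kappa}$ yields $\int_0^\infty s^{-1}(\eta_1^2)^2 \, ds = 1$, the number $M=M(\e)$ can be chosen so that simultaneously
\[
\int_0^\infty s^{-1}(\eta_1^2)^2\Phi^2 \, ds \ge 1-\e/2
\quad \text{and} \quad
\int_0^\infty s (\eta_1^2)^2(\Phi')^2 \, ds < \e/4 .
\]
The test function is then
\[
\varphi_p(r):=\eta_1^2\!\left(\tfrac{r}{\e_p^2}\right)\Phi\!\left(\tfrac{r}{\e_p^2}\right), \qquad r\in(0,1).
\]
Since $\e_p^2\to 0$, for $p$ large its support is contained in $[\e_p^2/(2M),2M\e_p^2]\subset (0,1)$, so $\varphi_p\in \mathcal H_{0,\rad}$.

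After the change of variable $s=r/\e_p^2$ and the same manipulation based on the identity $[(fg)']^2=f'(fg^2)'+f^2(g')^2$ used in the proof of Proposition \ref{prop:primo-autovalore-1}, this time exploiting that $\eta_1^2$ solves $-(s\psi')'=s(W^2-\kappa^2/s^2)\psi$ on $(0,\infty)$, the Rayleigh quotient reads
\[
\mathcal R^2(\varphi_p) = -\kappa^2 + \dfrac{\displaystyle\int_0^\infty s(W^2-W_p^2)(\eta_1^2)^2\Phi^2\,ds + \int_0^\infty s(\eta_1^2)^2(\Phi')^2\,ds}{\displaystyle\int_0^\infty s^{-1}(\eta_1^2)^2\Phi^2\,ds}.
\]
Because $\mathrm{supp}\,\Phi$ is a compact subset of $(0,\infty)$, the locally uniform convergence $W_p^2\to W^2$ makes the first term of the numerator arbitrarily small for $p$ large; the second term is at most $\e/4$ by construction; the denominator is at least $1-\e/2$. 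Consequently $\mathcal R^2(\varphi_p)\le -\kappa^2+\e$ for every sufficiently large $p$, and the conclusion follows from $\nu_1(p)\le \mathcal R^2(\varphi_p)$.

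The only real novelty with respect to the positive case is the need for an \emph{annular} cutoff and the accompanying verification that $\eta_1^2$ is small enough at both ends. This works precisely because $\kappa>0$ makes both tails of $s^{-1}(\eta_1^2)^2$ integrable. Apart from this, the argument is a routine adaptation of the one given in the proof of Proposition \ref{prop:primo-autovalore-1}, with $(\e_p,\eta_1,W^1,-1)$ replaced by $(\e_p^2,\eta_1^2,W^2,-\kappa^2)$.
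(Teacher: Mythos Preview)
Your proof is correct and follows essentially the same route as the paper: insert into the variational characterization \eqref{nu-var-1} a cutoff of the rescaled limit eigenfunction $\eta_1^2$, and read off $-\kappa^2$ from the equation it solves. The one noteworthy difference is that the paper reuses the one-sided cutoff $\Phi$ from \eqref{eq:cut-off} (vanishing only near infinity), whereas you take an annular cutoff vanishing also near the origin. Your choice is actually the cleaner one here: since \eqref{limit-W-2-2} gives $W_p^2\to W^2$ only in $C^0_{\loc}(0,\infty)$ (indeed $W_p^2(0)=(\e_p^1/\e_p^2)^{-2}\to\infty$), the step ``$\sup_{(0,2R)}|W^2-W_p^2|\to 0$'' from the proof of Proposition~\ref{prop:primo-autovalore-1} does not carry over verbatim. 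With your annular cutoff the support lies in a fixed compact of $(0,\infty)$ and the convergence is immediate; with the paper's one-sided cutoff one would instead need to note that near $0$ the sign $W^2-W_p^2\le W^2$ together with the decay $(\eta_1^2)^2\sim s^{2\kappa}$ still gives the required upper bound.
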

 \begin{proof}
	
It suffices to repeat the proof of Proposition \ref{prop:primo-autovalore-1}  with $\eta^2_1$ instead of $\eta_1$, after choosing $R=R(\e)>\delta^{\frac 1{2k}}$ in such a way $\eta_1^2$ is decreasing in $(R,+\infty)$, it satisfies
$\eta_1^2(r)\leq \eta_1^2(R)<\frac \e4$ for $r>R$ and 
\[\int_0^\infty r^{-1}(\eta_1^2)^2\Phi^2\ dr\geq \int_0^R r^{-1}(\eta_1^2)^2\Phi^2\ dr\geq 1-\e/2\]
since by definition $\displaystyle\int_0^\infty r^{-1}(\eta_1^2)^2 dr=1$.
\end{proof}

 We are now ready to prove Proposition \ref{prop-3-5-1}.
\begin{proof}
[Proof of Proposition \ref{prop-3-5-1}]
Thanks to Lemma \ref{lemma-stima-primo-autov} we know that (up to an extracted sequence) $\nu_1(p)\to \bar\nu_1\le -\kappa^2< -1$.
So the reasoning in Remark \ref{remark-dim} assures that $\widetilde\psi_1^1=0$ and leaves open only two options: or $\widetilde \psi_{1}^{2}=0$,  or, else $\bar \nu_1= -\kappa^2$. In the second case Lemma \ref{lemma-conv-psi} yields that any sequence $p_n\to\infty$ has an extracted subsequence such that  $\widetilde \psi^1_{1,p_{n_k}}\to 0$,  showing \eqref{lim-autov-palla-1} and \eqref{lim-autof-1}. Finally $\widetilde \psi^2_{1,p_{n_k}}\to A \eta^2_1$ for some constant $A\neq 0$, concluding the proof.
\\
Eventually it is left to check that $\widetilde \psi_1 ^{2}\neq 0$.
Let us fix a $\d>0$ such that $\d< \kappa^2/12$ and $R=R(\d)$ and $K=K(\d)$ as in Lemma \ref{lemma-stima-fp}. 
By the definition of $\nu_1(p)$ and by \eqref{normalization} it follows
		\begin{align*}
		-\nu_1(p) & =-\int_0^{1} r \left((\psi_{1,p}')^2-p|v^2_p|^{p-1} (\psi_{1,p})^2\right) dr \le  \int_0^{1} pr |v^2_p|^{p-1} (\psi_{1,p})^2 dr 
		\\
		& = 
		\int_0^{\e_p^1R}pr |v^2_p|^{p-1} (\psi_{1,p})^2  dr +
		\int_{\e_p^1R}^{\frac{\e_p^2}K}pr |v^2_p|^{p-1}  (\psi_{1,p})^2  dr \\
		& +\int_{\frac{\e_p^2}K}^{\e_p^2K}pr |v^2_p|^{p-1}  (\psi_{1,p})^2   dr
		 + \int_{\e_p^2K}^1 pr |v^2_p|^{p-1}  (\psi_{1,p})^2   dr\\
		& =  I_1(p)+I_2(p)+I_3(p)+I_4(p)
		\end{align*}
		Besides, for every $r_0,r_1\in [0,1]$
		\begin{align*}
		\int_{r_0}^{r_1}pr |v^2_p|^{p-1} (\psi_{1,p})^2  dr =\int_{r_0}^{r_1} f_p(r)\frac{(\psi_{1,p})^2}{r} dr
		\le  \max_{r_0<r<r_1}f_p(r)\int _{0}^{1}\frac{(\psi_{1,p})^2}{r}\, dr
		= \max_{r_0<r<r_1}f_p(r),
		\end{align*}
		so the estimate obtained in Lemma \ref{lemma-stima-fp} assures that
		$I_2(p)+I_4(p)<4\delta$ for $p>p(\delta)$.
		\\ 
		For what concerns the first integral, rescaling according to $\ep^1$ gives
		\begin{align*}
		I_1(p)=& \int_0^R rW_p^1(\widetilde\psi^1_{1,p})^2 dr
		\end{align*}
		where $W_p^1\to W^1$ in $C^0_{\loc}[0,+\infty)$ by \eqref{limit-W-1-2} and $\widetilde \psi_{1,p}^1\to 0$ in $L^2_{\loc}(\R^2)$ as noticed before. Then
	there exists $p_2(\d)>0$ such that $I_1(p) < \delta$ if $p>p_2(\delta)$. 
		With respect to third integral, rescaling according to $\ep^2$ gives
		\begin{align*}
		I_3(p)=& \int_{\frac 1K}^K rW_p^2(\widetilde\psi^2_{1,p})^2 dr
		\end{align*}
		where $W_p^2\to W^2$ in $C^0_{\loc}(0,+\infty)$ by \eqref{limit-W-2-2} and $\widetilde \psi_{1,p}^2\to \widetilde \psi_1^2$ in $L^2_{\loc}(\R^2)$ by Lemma \ref{lemma-conv-psi}.
		Then	there exists $p_3(\d)>0$
		such that
		\[I_3(p)\le\int_{\frac 1K}^K rW^2(\widetilde\psi^2_{1})^2 dr+\d \ \text{  for }p>p_3(\d).\]
		Summing up, taking $\bar p=\max\{p(\d), p_2(\d),p_3(\d)\}$ we have
		\[\int_{\frac 1K}^K rW^2(\widetilde\psi^2_{1})^2 dr\geq -\nu_1(p)-6\d \ \text{  for }p>\bar p\]
		and, passing to the $\liminf$  and using Lemma \ref{lemma-stima-primo-autov},
		\[\int_{\frac 1K}^k rW^2(\widetilde\psi^2_{1})^2 dr\geq-\limsup \nu_1(p)-6\d \ge  \kappa^2-6\d> {\kappa^2}/{2} >0  
		\]
		 by the choice of $\delta$. Hence $\widetilde\psi^2_1\neq 0$, concluding the proof. 
\end{proof}

 Eventually we deal with Proposition \ref{prop-3-5-2}.

{\begin{proof}[Proof of Proposition \ref{prop-3-5-2}]
	Recalling \eqref{stima-autov} in order to prove \eqref{lim-autov-palla-2} it suffices to show that for any $\e>0$ there exists $p_\e>1$ such that
	\begin{equation}\label{stima-secondo}
	\nu_2(p)\leq -1+\e\end{equation}	
	 for  $p\geq p_\e$. Let us take a cut-off function $\Phi$ as in \eqref{eq:cut-off}.
	Letting $\e_p^1$ and $\eta_1$ be as defined in \eqref{eq:epsilon-2}  and \eqref{prima-autof-limite-1} respectively, we set
	\begin{equation}\label{psi-epsilon}
	\varphi_p:=\eta_1\left(\frac r{ \e_p^1}\right)\Phi\left(\frac r{ \e_p^1}\right) +a_p \psi_{1,p} \ \ \text{ as }r\in[0,1]
	\end{equation}
	where $R=R(\e)>0$ is by now fixed and satisfies \eqref{R-e-1},  \eqref{R-e-2}, while 
  $a_p\in \R$ is such that $\varphi_p\underline{\perp} \psi_{1,p}$  according to \eqref{orto}, namely
	\begin{align}\nonumber
	a_p:& =-\frac{ \int_0^1 r^{-1} \eta_1\left(\frac r{ \e_p^1}\right)\Phi\left(\frac r{ \e_p^1}\right)  \psi_{1,p}(r)\, dr }{\int_0^1 r^{-1}(\psi_{1,p})^2\, dr   } \\
 \label{a-p}	& \underset{\eqref{normalization}}{=}-\int_0^1 r^{-1} \eta_1\left(\frac r{ \e_p^1}\right)\Phi\left(\frac r{ \e_p^1}\right)   \psi_{1,p}(r)\, dr .
	\end{align}
	Notice that since $\e_p^1\to 0$ we may assume w.l.g. that $p$ is so large that $\frac 1{ \e_p^1}>2R$, so that $\varphi_p\in \mathcal H_{0,\rad}$.\\	
	We insert the test function $\varphi_p$ into the variational characterization \eqref{nu-var-i} of $\nu_2(p)$ and get
	\begin{equation}\label{secondo-autov}
	\nu_2(p)\leq \frac{\int_0^1r \left((\varphi_p')^2- p|v_p^2|^{p-1}\varphi_p ^2\right)\, dr} {\int_0^1r^{-1}\varphi_p^2\, dr},
	\end{equation}
	then we  compute all the terms.
		
\

	\noindent First we claim that $a_p\to 0 $ as $p\to \infty$.
	Indeed we can write
\begin{align*}
	&\int_0^1 r^{-1}\eta_1\left(\frac r{ \e_p^1}\right)\Phi\left(\frac r{ \e_p^1}\right)    \psi_{1,p}(r)\, dr=
	\int_0^{t_{1,p}} r^{-1}\eta_1\left(\frac r{ \e_p^1}\right)\Phi\left(\frac r{ \e_p^1}\right)    \psi_{1,p}(r)\, dr\\
	&
	+\int_{t_{1,p}}^1 r^{-1}\eta_1\left(\frac r{ \e_p^1}\right)\Phi\left(\frac r{ \e_p^1}\right)    \psi_{1,p}(r)\, dr
	\end{align*}	
	Rescaling with respect to  $\e_p^1$ we have that
	\begin{align*}
	\int_0^{t_{1,p}} r^{-1}\eta_1\left(\frac r{ \e_p^1}\right)\Phi\left(\frac r{ \e_p^1}\right)    \psi_{1,p}(r)\, dr
	=\int_0^{\frac {t_{1,p}}{\e_p^1}} s^{-1} \eta_1( s)\Phi(s)    \widetilde\psi_{1,p}^{1}(s) \, ds  \\
	\intertext{ and since $\frac {t_{1,p}}{\e_p^1}\to \infty$ by \eqref{limit-zone}, recalling that the support of $\Phi$ is compact we get}
	=\int_0^{ \infty} s^{-1} \eta_1( s)\Phi(s)    \widetilde\psi_{1,p}^{1}(s)\, ds
	\to 0
	\end{align*}
	as $p\to \infty$, because  $\widetilde\psi_{1,p}^{1}\to 0$ weakly in 
	$\mathcal L_{\rad}(\R^2)$ by Proposition \ref{prop-3-5-1}.
Further the same property \eqref{limit-zone} implies that
	\[\int_{t_{1,p}}^1 r^{-1}\eta_1\left(\frac r{ \e_p^1}\right)\Phi\left(\frac r{ \e_p^1}\right)    \psi_{1,p}(r)\, dr=0\]
for  $p$ so large that  $\frac {t_{1,p}}{\e_p^1}>2R$.
Next

\begin{align}\nonumber
\int_0^1r |\varphi_p'|^2 dr  & =  \int_0^1r \left(\left(\eta_1\left(\frac{r}{\e_p^1}\right)\Phi\left(\frac r{ \e_p^1} \right)\right)'\right)^2 dr \\ \nonumber
& + a_p^2\int_0^1r(\psi_{1,p}')^2\ dr+2a_p\int_0^1r\psi_{1,p}'\left(\eta_1^1\left(\frac{r}{\e_p^1} \right)\Phi^2\left(\frac r{ \e_p^1} \right)\right)' dr
\intertext{ and since $\e_p^1\to 0$, the same  computations made to obtain \eqref{f4} in the proof of Proposition \ref{prop:primo-autovalore-1} give}
\nonumber 
& = -\int_0^{\infty} s^{-1}(\eta_1)^2\Phi^2 ds  +\int_0^{\infty}s\, W^1 (\eta_1)^2 \Phi ^2 ds
+ \int_0^{\infty} s (\eta_1)^2 (\Phi' )^2 ds \\
\label{f4bis}
& + a_p^2\int_0^1r(\psi_{1,p}')^2\ dr+2a_p\int_0^1r\,\psi_{1,p}'\left(\eta_1\left(\frac{r}{\e_p^1}
\right)\Phi^2\left(\frac r{ \e_p^1} \right)\right)' 
\ dr
\end{align}

Moreover
	\begin{align} \nonumber
	\int_0^1r p|v_p^2|^{p-1}\varphi_p ^2\, dr& =\int_0^1 r p|v_p^2|^{p-1}\left(\eta_1\left(\frac r{ \e_p^1}\right)\Phi\left(\frac r{ \e_p^1}\right)\right)^2dr
	 + a_p^2\int_0^1r p|v_p^2|^{p-1}\psi_{1,p}^2 dr\\ \nonumber
	 & + 2 a_p\int_0^1 r p|v_p^2|^{p-1}\psi_{1,p} \eta_1\left(\frac r{ \e_p^1}\right)\Phi\left(\frac r{ \e_p^1}\right)dr
	\intertext{and rescaling with respect to $\e_p^1$ in the first integral, since $\frac 1{\e_p^1}>2R$ we get} \nonumber 
	&=\int_0^{\infty}sW^1_p(\eta_1)^2\Phi^2ds +a_p^2\int_0^1r p|v_p^2|^{p-1}\psi_{1,p}^2 dr \\
	\label{step3bis} & 	+ 2 a_p\int_0^1 r p|v_p^2|^{p-1}\psi_{1,p} \eta_1\left(\frac r{ \e_p^1}\right)\Phi\left(\frac r{ \e_p^1}\right)dr.
	\end{align}

\

Putting together \eqref{f4bis} and \eqref{step3bis}  we obtain
\begin{align*}
&\int_0^1r |\varphi_p'|^2 dr
-\int_0^1r p|v_p^2|^{p-1}\varphi_p ^2\, dr = -\int_0^{\infty} s^{-1}(\eta_1)^2\Phi^2 \ ds  \\
&+\int_0^{\infty}s\left[W^1-W^1_p\right] (\eta_1)^2 \Phi^2  ds
+  \int_0^{\infty} s (\eta_1)^2 (\Phi')^2 ds\\
&+a_p^2\int_0^1 r\left((\psi_{1,p}')^2     -p|v_p^2|^{p-1}\psi_{1,p}^2    \right)   dr\\
&+2a_p \int_0^1 r\left(\psi_{1,p}'\left(\eta_1\left(\frac{r}{\e_p^1}
\right)\Phi\left(\frac r{ \e_p^1} \right)\right)' 
-p|v_p^2|^{p-1}\psi_{1,p} \eta_1\left(\frac r{ \e_p^1}\right)\Phi\left(\frac r{ \e_p^1}\right)
\right)    dr.
\end{align*}
Since $\psi_{1,p}$ solves \eqref{radial-general-H-no-c} we get
\begin{align*}
\int_0^1 r\left((\psi_{1,p}')^2     -p|v_p^2|^{p-1}\psi_{1,p}^2    \right)   dr = \nu_1(p) \int_0^1 r^{-1} \psi_{1,p}^2  dr \underset{\eqref{normalization}}{=} \nu_1(p),
\end{align*}	
	and similarly
	\begin{align*}
\int_0^1 r\left(\psi_{1,p}'\left(\eta_1\left(\frac{r}{\e_p^1}
\right)\Phi\left(\frac r{ \e_p^1} \right)\right)' 
-p|v_p^2|^{p-1}\psi_{1,p} \eta_1\left(\frac r{ \e_p^1}\right)\Phi\left(\frac r{ \e_p^1}\right)
\right)    dr \\
=  \nu_1(p)\int_0^1 r^{-1}  \eta_1\left(\frac{r}{\e_p^1}
\right)\Phi\left(\frac r{ \e_p^1} \right) \psi_{1,p} dr \underset{\eqref{a-p}}{=} -\nu_1(p) a_p .
	\end{align*}	
Eventually
\begin{align} \nonumber 
\int_0^1r |\varphi_p'|^2 dr
-\int_0^1r p|v_p^2|^{p-1}\varphi_p ^2\, dr= -\int_0^{\infty} s^{-1}(\eta_1)^2\Phi^2 ds  
\\ \label{num}
+\int_0^{\infty}s\left[W^1-W^1_p\right] (\eta_1)^2 \Phi^2  ds+  \int_0^{\infty} s (\eta_1)^2 (\Phi')^2 ds 
 - \nu_1(p) a_p^2
\end{align}

\

On the other hand  by the definition of $\varphi_p$ it follows 
	\begin{align} \nonumber
	\int_0^1r^{-1}\varphi_p^2, dr &=\int_0^1 r^{-1}\left(\eta_1\left(\frac r{ \e_p^1} \right)\Phi\left(\frac r{ \e_p^1} \right)\right)^2 + \\ \nonumber
	 & + a_p^2\int_0^1 r^{-1}(\psi_{1,p})^2+2 a_p\int_0^1 r^{-1} \eta_1\left(\frac r{ \e_p^1}\right)\Phi\left(\frac r{\e_p^1}\right)\psi_{1,p}(r) \\ \nonumber 
	&  \underset{\eqref{normalization}, \eqref{a-p}}{=} \int_0^1r^{-1}\left(\eta_1\left(\frac r{ \e_p^1} \right)\Phi\left(\frac r{ \e_p^1} \right)\right)^2 - a_p^2 
	\intertext{and rescaling with respect to $\e_p^1$ and using the properties of $\Phi$}
	\label{den}
	& = \int_0^{\infty} s^{-1}(\eta_1)^2\Phi^2  ds  - a_p^2.
	\end{align}

Inserting \eqref{num} and \eqref{den} into \eqref{secondo-autov}  we obtain
\begin{align*}
	\nu_2(p)\leq &  \dfrac{-\int_0^{\infty} s^{-1}(\eta_1)^2\Phi^2 ds  +\int_0^{\infty}s\left[W^1-W^1_p\right] (\eta_1)^2 \Phi^2  ds
	+  \int_0^{\infty} s (\eta_1)^2 (\Phi')^2 ds - \nu_1(p) a_p^2}{\int_0^{\infty} s^{-1}(\eta_1)^2\Phi^2  ds- a_p^2} \\
 & = -1 + \dfrac{\int_0^{\infty}s\left[W^1-W^1_p\right] (\eta_1)^2 \Phi^2  ds	+  \int_0^{\infty} s (\eta_1)^2 (\Phi')^2 ds- \left(\nu_1(p)+1\right) a_p^2}{ \int_0^{\infty} s^{-1}(\eta_1)^2\Phi^2  ds- a_p^2}.
 \end{align*}
By the choice of $R$ we have
\begin{align*} 
&	\int_0^{\infty} s (\eta_1)^2(\Phi')^2 ds=\int_R^{2R} s (\eta_1)^2(\Phi'))^2 ds\\
&	 \le \frac{4}{R^2}\int_R^{2R}  s \eta_1^2(s)ds \underset{{ \eqref{R-e-1}}}{<} \frac{\e^2 }{4 R^2}  \int_R^{2R} s\  ds = \frac{3\e^2}{8}  < \frac{3\e}{8} .
\end{align*} 

Therefore using also \eqref{R-e-2} we get
\begin{align*} 
 \nu_2(p)<  & -1 + \dfrac{\int_0^{\infty}s\left|W^1-W^1_p\right| (\eta_1)^2 \Phi^2  ds	+ \frac{3\e}{ 8}  - \left(\nu_1(p)+1\right) a_p^2}{ 1-\frac{\e}{2} - a_p^2}.
\end{align*}
Since $a_p\to 0$ and $\nu_1(p)$ is bounded, we can choose $p=p_{\e}$  so large that 
\[ a^2_p<\frac \e2 \ \  \text{ and } \ \ \left(\nu_1(p)+1\right) a_p^2 > -\frac{\e}{16} .\]
	On the other hand by the properties of $\Phi$  we have
\begin{align*}
\int_0^{+\infty} s  \left|W^1 -W^1_p\right| (\eta_1)^2\Phi^2 ds \le \sup_{(0,2R)}|W^1-W^1_p|\int_0^{2R}s \eta_1^2 ds 
\end{align*}
and since $W^1_p\to W^1$ uniformly on $[0,2R]$ we can possibly enlarge  $p_{\e}$ in such a way that
\[ \sup_{(0,2R)}|W^1-W^1_p| \le \frac{\e }{16\int_0^{2R}s \eta_1^2 ds }  \quad \text{ for } p> p_{\e}.\]
Eventually we end up with  
\[\nu_2(p) < - 1 + \frac{\e}{ 2(1-\e)} < -1+\e,\] which concludes the proof of \eqref{lim-autov-palla-2}.
Next Lemma \ref{lemma-conv-psi} and Remark \ref{remark-dim} yield also \eqref{lim-autof-2} and \eqref{lim-autof-2-2}. \\
In particular   $\widetilde{\psi}_{2,p}^1\to A\eta_1$ weakly in $\mathcal D_\rad$ and strongly in $L^2_{\loc}(\R^2)$ for some $A\in \R$. It remains to show that $ A\neq 0$, which can be seen reasoning as in the proof of Proposition \ref{prop-3-5-1}.
Recalling the definition of $\nu_2(p)$ and the normalization in \eqref{normalization} we have 
	\begin{align*} -\nu_2(p) & = -\int_0^1 r\left(\left(\psi_{2,p}'\right)^2-p|v_p^2(r)|^{p-1}\psi_{2,p}^2\right) dr \le \int_0^1 r p|v_p^2(r)|^{p-1}\psi_{2,p}^2dr 
	\intertext{For any $\e>0$ we choose $R=R(\e)$ and $K=K(\e)$ as in Lemma \ref{lemma-stima-fp} and we divide the interval $(0,1)$ in the following way  }
& = \int_0^{\e_p^1R}  r p|v_p^2(r)|^{p-1}\psi_{2,p}^2 dr+ \int_{\e_p^1R}^{\frac{\e_p^2}K}  r p|v_p^2(r)|^{p-1}\psi_{2,p}^2 dr  \\
&  +\int_{\frac{\e_p^2}K}^{\e_p^2K}  r p|v_p^2(r)|^{p-1}\psi_{2,p}^2 dr+ 	\int_{\e_p^2K}^1 r p|v_p^2(r)|^{p-1}\psi_{2,p}^2 dr \\
	&=I_1(p)+I_2(p)+I_3(p)+I_4(p)
	\end{align*}
	{By the same computations made in the proof of Proposition \ref{prop-3-5-1},
	Lemma \ref{lemma-stima-fp} and the normalization in \eqref{normalization} imply that there exists $p_\e$ such that
		\begin{equation}\label{f5}
	\begin{split}
	I_4(p)=&\int_{\e_p^2K}^1rp|v_p^2(r)|^{p-1}\psi_{2,p}^2(r)\, dr  \le  \max_{\e_p^2K<r<1}f_p(r)\int _{\e_p^2K}^1 r^{-1}\psi_{2,p}^2(r)\, dr \\
	&\le  \max_{\e_p^2K<r<1}f_p(r) <\e
	\end{split}\end{equation}
	and in the same manner 
	\begin{equation}\label{f5-bis}
	I_2(p)\le \max_{     \e_p^1R <r< \frac{e_p^2} K  }f_p(r) <\e
	\end{equation}
	}
\remove{	Rescaling the integral $I_3(p)$ we have instead,  denoting by $\widetilde\psi_{2}^2$ the weak limit in $\mathcal D_{\rad}$ of $\widetilde\psi_{2,p}^2$  as $p\to \infty$
	\[\begin{split}
	I_3(p)= &  \int_{\frac 1K}^{K} sW_p^2(s)\left(\widetilde\psi_{2,p}^2\right)^2\, ds\\
	&=      \int_{\frac 1K}^{K} sW^2\left(\widetilde\psi_{2}^2\right)^2\, ds+   \int_{\frac 1K}^{K}s\left(W^2_p\left(\widetilde\psi_{2,p}^2\right)^2-W^2\left(\widetilde\psi_{2}^2\right)^2\right) ds\\
	&= \int_{\frac 1K}^{K}s\left(W^2_ p\left(\widetilde\psi_{2,p}^2\right)^2-W^2\left(\widetilde\psi_{2}^2\right)^2\right) ds
	\end{split}\]
	since $\widetilde\psi_{2}^2\equiv 0$  by \eqref{lim-autof-2}. Moreover by \eqref{limit-W-2-2} and Lemma \ref{lemma-conv-psi} we know that $W_p^2(s)$ converges uniformly in $[\frac 1K,K]$ to $W^2(s)$, while $\widetilde\psi_{2,p}^2\to \widetilde\psi_{2}^2$ in $L^2(\frac 1K,K)$ as $p\to \infty$. This means that there exists an exponent $\tilde p_\e$ such that for any $p\geq \tilde p_\e$ it holds
	\begin{equation}\label{stima-I-3}
	I_3{\taglia \geq -\ } {\AL \le } 2\e.
	\end{equation}
	\edz{\F La frase in rosso sostituisce quello prima a partire da Rescaling }}
{Rescaling the integral $I_3(p)$ we have instead
	\[\begin{split}
	I_3(p)= &\int_{\frac{\e_p^2}K}^{\e_p^2K}  r p|v_p^2(r)|^{p-1}\psi_{2,p}^2 dr=\int_{\frac 1K}^{K} sW_p^2(s)\left(\widetilde\psi_{2,p}^2\right)^2\, ds\\
	=& \int_{\frac 1K}^{K}s\left[W_p^2(s)-W^2(s)\right]\left(\widetilde\psi_{2,p}^2\right)^2\, ds+\int_{\frac 1K}^{K} sW^2(s)\left(\widetilde\psi_{2,p}^2\right)^2\, ds\\
&\le \sup_{\frac 1K<|x|<K}	\left|W_p^2(s)-W^2(s)\right|+C\int_{\frac 1K}^{K} s\left(\widetilde\psi_{2,p}^2\right)^2\, ds
	\end{split}\]
	by \eqref{normalization} and the boundedness of $W^2$. Next \eqref{limit-W-2-2} implies that $W_p^2$ converges uniformly in $[\frac 1K,K]$ to $W^2$, while $\widetilde \psi_{2,p}^2\to 0$ in $L^2(B_K)$, for every $K$, as $p\to \infty$ by \eqref{lim-autof-2} and Lemma \ref{lemma-conv-psi}, showing that 
	$I_3(p)\to 0 $  as $p\to \infty$ and there exists an exponent $\tilde p_\e$ such that for any $p\geq \tilde p_\e$ it holds
	\begin{equation}\label{stima-I-3}
	I_3(p) \le  2\e.
	\end{equation}
	}
	Finally, rescaling the integral $I_1(p)$ we have
	{
	\[\begin{split}
	I_1(p)=& \int_0^{\e_p^1R}rp|v_p^2|^{p-1}\psi_{2,p}^2\, dr=\int_0^R sW_p^1\left(\widetilde \psi_{2,p}^1\right)^2\, ds\\
	&=\int_0^R s\left( W^1_p- W^1\right)\left(\widetilde \psi_{2,p}^1\right)^2 ds+
	\int_0^R s W^1\left( \left(\widetilde \psi_{2,p}^1\right)^2-\left(A\eta_1\right)^2\right) ds+ A^2\int_0^Rs W^1(\eta_1)^2 \ ds\\
	&\leq \sup_{|x|<R}\left|W^1_p(s)- W^1(s)\right|+C\int_0^R s\left|\left(\widetilde \psi_{2,p}^1\right)^2-\left(A\eta_1\right)^2\right| ds + A^2\int_0^Rs W^1(\eta_1)^2 \ ds
	\end{split}\]
	by \eqref{normalization} and the boundedness of $W^1$.
	Next	 \eqref{limit-W-1-2} implies that $W_p^1$ converges uniformly in $[0,R]$ to $W^1$ while $\widetilde\psi_{2,p}^1\to  A \eta_1$ in $L^2(B_R)$ as $p\to \infty$ by \eqref{lim-autof-2-2},
		   showing that there exists an exponent $\hat p_\e$ such that for any $p\geq \hat p_\e$ it holds
	\begin{equation}\label{stima-I-1}
	 I_1(p) \le A^2  \int_0^R sW^1 (\eta_1)^2 ds + 2\e.
	\end{equation}
	}
	Finally from \eqref{lim-autov-palla-1} we have that for any $\e>0$ there exists $p^*_\e>1$ such that  for any $p\geq  p^*_\e$ it holds
	\begin{equation}\label{stima-nu-2}
	\nu_2(p)\leq -1+\e.
	\end{equation}
	Choosing  $p\geq \max\{ p_\e,\tilde p_\e, \hat p_\e, p^*_\e\}$ then \eqref{f5}, \eqref{f5-bis}, \eqref{stima-I-3} and \eqref{stima-I-1} imply that 
	\begin{equation}
	 1-\e< - \nu_2(p) \le  A^2 \int_0^R sW^1 (\eta_1)^2 ds +  6\e
	\end{equation}
	giving
	\[  A^2 \int_0^R sW^1 (\eta_1)^2 ds  \geq 1-7\e>0\]
	for  $\e<\frac 17$. This implies that  $A\neq 0$ and concludes the proof.
\end{proof}

\

\section{Least energy solutions in symmetric spaces}\label{sec:4}

	In this section we want to find new solutions to \eqref{H} which  admit some rotational symmetry.
 To this end, for any angle $\psi$, we denote by $R_\psi$ the rotation of angle $\psi$ in counterclockwise direction centered at the origin and by $\mathcal G_\psi$ the subgroup of $SO(2)$ generated by $R_\psi$. In particular we consider angles $\psi=\frac {2\pi}n$ with $n\in \N$, $n\geq 1$, so that $\mathcal G_{\frac {2\pi}n}$ is a proper subgroup of $SO(2)$. \\
We say that 
a function $u$ defined in $B$ is $n$-invariant if it satisfies
\begin{equation}\label{k-inv}
v(x)=v\left(g(x)\right) \ \ \text{ for every }x\in B, \ \ \text{ for every } g\in \mathcal G_{\frac {2\pi}n}.
\end{equation}
Next we denote by $H^1_{0,n}$ the subspace of $H^1_0(B)$ given by functions which are $n$-invariant, namely 
\begin{equation}\label{eq:def-H-k}
H^1_{0,n}:=\{v\in H^1_0(B) :  \ v(x)=v(g(x)) \ \ \text{ for any }x\in B,  \text{ for every } g\in \mathcal G_{\frac {2\pi}n}\}.
\end{equation}

For $n=1$  $\mathcal G_{2\pi}=\{I\}$ is the trivial subgroup of $SO(2)$, so  the space $H^1_{0,1}$ coincides with $H^1_0(B)$, while all the other spaces $H^1_{0,n}$ are strictly contained in $H^1_0(B)$. 
Observe also that $\mathcal G_{\frac {2\pi}n}$ is a subgroup of $\mathcal G_{\frac {2\pi}m}$ if $m$ is a multiple of $n$ showing that
$H^1_{0,m}\subseteq H^1_{0,n}$ in this case.   Lastly $H^1_{0,\rad}\subset H^1_{0,n}$ for every $n$.

 In order to obtain new $n$-invariant solutions let us recall for a while how positive and sign changing solutions to \eqref{H} can be produced when the problem has a variational structure, as in our case, namely 
when solutions are critical points for the energy functional
 $\mathcal E(u)$ as defined in  \eqref{eq:energy}. It is standard, in this situation, to find solutions looking at the minima of $\mathcal E(u)$ constraint on the manifold 
 \[ \mathcal N:=\{v\in H^1_0(B): \text{ s.t. }v\neq 0, \ \mathcal E'(v)v=0\}\]
 where $\mathcal E'(u)$ denotes the Fr\'echet derivative of $\mathcal E$ in $u$.  In order to find sign changing solutions, instead, the nodal Nehari manifold has been introduced, see \cite{CCN} and \cite{BWW} and nodal solutions can be found looking at the minima of $\mathcal E(u)$ on the manifold 
 \[\mathcal N_{\nod}:=\{v\in H^1_0(B): \text{ s.t. }v^+, \, v^- \neq 0, \ \mathcal E'(v)v^+=0, \ \mathcal E'(v)v^+=0\}\]
 where $s^+$ and $s^-$ denote the positive and the negative part of $s$ respectively.  
 As an example of how this procedure can be performed to obtain solutions to the H\'enon problem we quote the paper \cite{SerraTilli} where the authors proved the existence of a positive and radially increasing solution to the H\'enon problem with Neumann boundary conditions in dimension $N\geq 2$ and \cite{BWW} and \cite{BW} that deal with nodal solutions.\\
 The same construction can be repeated in the symmetric spaces $H^1_{0,n}$ after
  introducing,  for every $n\geq 1$,  the $n$-invariant Nehari manifold 
 \[\mathcal N_n:=\{v\in H^1_{0,n}\, : \, v\neq 0,  \   \ \mathcal E'(v)v=0 \}\]
and the nodal  $n$-invariant  Nehari manifold
 \[ \mathcal N_{n, \nod}=\Big\{v\in H^1_{0,n} \, : \,  v^+, \, v^- \neq 0 ,   \ \mathcal E'(v)v^+=0, \ \mathcal E'(v)v^+=0  \Big\}\]
Since, for every $p>1$,  $H^1_{0,n}$, is compactly embedded in  $L^p(B)$,
it is quite standard to see that $\min_{u\in \mathcal N_n}\mathcal E(u)$ is attained at a nontrivial function, that we denote by $u_{p,n}^1$   and call {\it least energy $n$-nvariant} solution. By the principle of symmetric criticality in \cite{Palais}
these
functions $ u_{p,n}^1\in H^1_{0,n}$ are symmetric critical points for $\mathcal E(u)$
and hence, are weak solutions to \eqref{H}  that are positive in $B$ by construction. 
\\
 In a similar way also $\min_{u\in \mathcal N_{n, \nod}}\mathcal E(u)$ is attained at a nontrivial function
that we denote by $u^2_{p,n}$ and call {\it least energy nodal $n$-invariant solution}.
 Again the principle of symmetric criticality shows that $u^2_{p,n}$ are weak solutions to \eqref{H}  that change sign in $B$ by construction.
 \\
When $n=1$ $u^1_{p,1}$ and $u^2_{p,1}$ coincide with the least energy and the nodal least energy solutions to \eqref{H}  that have been studied in \cite{AdG} and \cite{GGP13}.

In the remaining of this section we shall prove that, for suitable values of the integer $n$, such least energy energy  $n$-invariant solutions are nonradial and distinct one from another, thus obtaining the multiplicity results stated in the Introduction as Theorems \ref{teo:existence-1} and \ref{teo:existence-2}. Non-radiality will be proved  by considerations based on the Morse index  in the spaces $H^1_{0,n}$, while the fact that such solutions do not coincide follows by a strict monotonicity result in \cite{Gladiali-19}. 
The present multiplicity result is inspired by an analogous one in  \cite{GI}, dealing with { nodal solutions to }the Lane-Emden problem. 
In this last paper the spaces $H^1_{0,n}$ are slightly different since the functions in \cite{GI} have to be symmetric with respect to one variable. Basically in \cite{Gladiali-19} it is shown that solutions in $H^1_{0,n}$, under some additional assumption which is satisfied in the present situation, 
are symmetric with respect to a direction in a sector of amplitude $\frac{2\pi}n$, so that the solutions in \cite{GI} are, up to a rotation, the same we found here working in $H^1_{0,n}$ without imposing an extra symmetry. 
 Note that while in \cite{GI} this procedure produces results only in the case of nodal solutions, in the framework of the H\'enon problems it finds a wider range of applications.

In the following subsection we define the notion of Morse index in the symmetric spaces $H^1_{0,n}$ and we compute it for the least energy solutions $u^1_{p,n}$ and $u^2_{p,n}$  by taking advantage from their minimality. Next, using the asymptotic results obtained in Section \ref{sec:3}, we compute it also for the radial solutions $u^1_p$ and $u^2_p$, when the parameter $p$ is large. Eventually in the last subsection we prove the multiplicity results.

\subsection{The $n-$symmetric Morse index}\label{se:3.3}

Working in the symmetric spaces $H^1_{0,n}$, $n\geq 1$ we need to adapt the notion of Morse index to these spaces. To this end, if $u_p$ is a solution to \eqref{H} that belongs to $H^1_{0,n}$ we denote by $m_n(u_p)$ the Morse index of $u_p$ in the space $H^1_{0,n}$, namely the
maximal dimension of a subspace $X$ of $H^1_{0,n}$ in which the quadratic form $Q_u$ is negative defined, or equivalently, the
 number of negative eigenvalues of the linearized operator $L_{u_p}$ which have corresponding eigenfunction in $H^1_{0,n}$. We refer hereafter to $m_n(u_p)$ as the $n$-invariant Morse index of $u_p$.

 Following  \cite{BW} it is not hard to see that when $u_p=u^1_{p,n}$ or $u_p=u^2_{p,n}$ are the least energy $n$-invariant (or least energy nodal $n$-invariant) solutions to \eqref{H}, then 
\begin{equation}\label{morse-index-n-symmetric}
m_n(u^1_{p,n})=1 \ \ \text{ and } \ m_n(u^2_{p,n})=2 .
\end{equation}
Indeed 
	a minimum $u^1_n$ of $\mathcal E(u)$ on $\mathcal N_n$ satisfies $\langle\mathcal E''(u^1_n)\psi,\psi\rangle\geq 0$ for every $\psi$ on the tangent space to $\mathcal N_n$, where $\mathcal E''(u)$ is the second Fr\'echet derivative of $\mathcal E$ at $u$  and $\langle,\rangle$ is the pairing.
		Since $\langle\mathcal E''(u)\psi,\psi\rangle=Q_u(\psi)$, where $Q_u$ is the quadratic form as in \eqref{eq:Q-u}, and $\mathcal N_n$ has codimension 1, this shows that $m_n(u^1_n)\leq 1$.
		The fact that the $n$-Morse index of $u^1_n$ is exactly one then follows observing that, since $u^1_n\in \mathcal N_n$, we have
		\[Q_{u^1_n}(u^1_n)=\int _B |\nabla u^1_n|^2-p\int_B|x|^\a|u^1_n|^{p+1}=(1-p)\int _B |\nabla u^1_n|^2<0 .\]
		In the same way a minimum $u^2_n$ of $\mathcal E(u)$ on $\mathcal N_{n,\nod}$  satisfies $\langle\mathcal E''(u^2_n)\psi,\psi\rangle\geq 0$ for every $\psi$ on the tangent space to $\mathcal N_{n,\nod}$. Since $\mathcal N_{n,\nod}$ has codimension 2 in $H^1_{0,n}$, it follows that $m_n(u^2_n)\leq 2$.
		Besides both the positive and negative part of $u^2_n$ belong to $\mathcal N_n$, so that  
		\[Q_{u^2_n}((u^2_n)^\pm)=\int _B |\nabla (u^2_n)^\pm|^2-p\int_B|x|^\a|(u^2_n)^\pm|^{p+1}=(1-p)\int _B |\nabla (u^2_n)^\pm|^2<0 ,\]
		which proves that $m(u^2_n)=2$ because $(u^2_n)^+$ and $(u^2_n)^-$ are linearly independent
		 and concludes the proof of \eqref{morse-index-n-symmetric}.\\

Generally speaking, for any solution $u_p$ to \eqref{H}  that belongs to $H^1_{0,n}$ \cite[Proposition 3.7]{AG-sez2} states that $m_n(u_p)$ coincides with the number of negative singular eigenvalues $\widehat \L_h(p)$ of \eqref{singular-eigenvalue-problem} which have corresponding eigenfunction in $H^1_{0,n}$. 
Coming to radial solutions, as in the case of the functions $u_p^1$ and $u_p^2$ studied before, the $n$-invariant Morse index can be computed starting from the decomposition 
recalled in Section \ref{sec:3}:
 \begin{equation*} \tag{\ref{decomposition}}
\widehat \L _h(p)=\widehat \L_j^{\rad}(p)+k^2,
\end{equation*}
and the shape of the corresponding eigenfunctions 
 \begin{equation*}\tag{\ref{eq:autofunzioni}}
\widehat \phi_h(r,\theta)=\widehat \phi_j^\rad (r)(A \cos (k \theta)+B\sin (k \theta)).
\end{equation*}
Indeed $ \widehat \L_h(p)$ (when it is negative) has a corresponding eigenfunction in $H^1_{0,n}$ if and only if either $k=0$ in \eqref{decomposition}, since the corresponding eigenfunction is radial so that belongs to $H^1_{0,n}$ for every $n$, or, else if $A\cos (k\theta)+B\sin(k\theta)$ belongs to $H^1_{0,n}$, namely when $k$ is a multiple of $n$ (i.e. $k/n$ is an integer). In particular the multiplicity of $ \widehat \L_h(p)$,  when it is not zero,  is either $1$ corresponding to $k=0$ in \eqref{decomposition}, or $2$ corresponding to $k= l n$ for some positive integer $l$.\\
Thanks to this in the present setting the general formula for the symmetric Morse index \cite[Corollary 5.10]{AG-sez2} becomes
\begin{align}\label{eq:Morse-simmetrico}
		m_n(u^i_p) & = i+ 2 \sum\limits_{j=1}^{i} \left[ \frac 1n \left\lceil \frac{2+\a}{2} \sqrt{- {\nu}_j(p) }-1\right\rceil \right]
		\end{align}
		where $\nu_j(p)$ are the negative singular eigenvalues associated with $v_p^1$ and $v_p^2$ that have been studied in Section \ref{sec:3}, 
		$\lceil \, \rceil$ and $[\, ]$ stand respectively for the ceiling function and the integer part.
	\\
We can then deduce from the asymptotic behaviour of $\nu_j(p)$ obtained in Propositions \ref{prop:primo-autovalore-1} and  \ref{prop-3-5} the value of the $n$-Morse index of 	$u_p^1$ and $u_p^2$  for large values of $p$.

	\begin{corollary}\label{cor:morse-sim}
		Let $\a\geq 0$ be fixed and let $u^1_p$  and $u_p^2$ be a least energy radial and a least energy nodal radial solution to \eqref{H} corresponding to $\a$ respectively. Then there exists $p^\star=p^\star(\a)>1$ and $p_2^\star =p_2^\star(\a)>1$ such that for any $p>p^\star$ we have
\[
	m_n(u_p^1)  = 1+2\left[ \frac 1n \left\lceil\frac{\alpha}{2}\right\rceil\right]
\]
	and for any $p>p^\star_2$ we have
\[
	m_n(u_p^2)  =2+ 2\left[ \frac 1n  \left\lceil\frac{2+\alpha}{2}\kappa-1\right\rceil  \right]+2\left[ \frac 1n \left\lceil\frac{\alpha}{2}\right\rceil\right]
	\]
if $\a\neq \a_i= 2(\frac{i}{\kappa}-1)$, or
	\[
	{2+ 2\left[ \frac 1n  \left\lceil\frac{2+\alpha}{2}\kappa\right\rceil  \right]+2\left[ \frac 1n \left\lceil\frac{\alpha}{2}\right\rceil\right] 
	\ge m_n(u_p^2) \ge 2+ 2\left[ \frac 1n  \left\lceil\frac{2+\alpha}{2}\kappa-1\right\rceil  \right]+2\left[ \frac 1n \left\lceil\frac{\alpha}{2}\right\rceil\right]}
\]
	if $\a=\a_i$ for some integer $i$.
\remove{	  there exists $p^\star_2=p^\star_2(\a)>1$ such that for any $p>p^\star_2$ we have
	\begin{equation}\label{eq:morse-index-2}
	m(u_p^2)  =2\left\lceil\frac {2+\a}2\kappa\right\rceil +  2\left\lceil\frac{\alpha}{2}\right\rceil 
	\end{equation}
	Otherwise, if $\a=\a_n$ for some integer $n$, then there exists $p^\star_ 2>1$ such that for any $p>p^\star_2$ we have
	\begin{equation}\label{eq:morse-index-2-an}
	(2+\a) \kappa  +  2\left\lceil\frac{\alpha}{2}\right\rceil\le m(u_p^2)\le  (2+\a) \kappa+  2\left\lceil\frac{\alpha}{2}\right\rceil+ 2
	\end{equation}}
		\end{corollary}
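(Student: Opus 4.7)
The plan is to derive both formulas directly from the symmetric Morse index formula \eqref{eq:Morse-simmetrico} (which is \cite[Corollary 5.10]{AG-sez2}) by plugging in the asymptotic behavior of the singular eigenvalues obtained in Propositions \ref{prop:primo-autovalore-1} and \ref{prop-3-5}, reasoning exactly as in the proofs of Theorems \ref{teo-morse-1} and \ref{teo-morse-2} but tracking how the additional factor $1/n$ and the outer integer part interact with the ceiling function.

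For $u_p^1$ the formula reduces to
\[ m_n(u_p^1) = 1 + 2\left[ \frac{1}{n}\left\lceil \frac{2+\a}{2}\sqrt{-\nu_1(p)}-1\right\rceil\right]. \]
Since $\nu_1(p)\in(-1,0)$ by \eqref{stima-primo-autov} and $\nu_1(p)\to-1$ by Proposition \ref{prop:primo-autovalore-1}, the argument of the ceiling converges to $\a/2$ strictly from below. Left-continuity of $\lceil\cdot\rceil$ then gives $\lceil \frac{2+\a}{2}\sqrt{-\nu_1(p)}-1\rceil=\lceil \a/2\rceil$ for all $p$ large enough, uniformly in $n$; plugging in proves the first assertion.

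For $u_p^2$ the index is governed by two pieces coming from $\nu_1(p)$ and $\nu_2(p)$. Since $\nu_2(p)\in(-1,0)$ and $\nu_2(p)\to-1$ from above, the same left-continuity argument shows that the $j=2$ contribution stabilizes to $[\frac{1}{n}\lceil \a/2\rceil]$. The $j=1$ contribution instead satisfies $\frac{2+\a}{2}\sqrt{-\nu_1(p)}-1\to \frac{2+\a}{2}\kappa-1$ with no controlled direction, since \eqref{stima-autov} only gives $\nu_1(p)<-1$, still compatible with $\nu_1(p)$ approaching $-\kappa^2$ from either side. When $\frac{2+\a}{2}\kappa\notin\mathbb{Z}$, i.e.\ $\a\neq\a_i$, the limit is non-integer and $\lceil\cdot\rceil$ is continuous there, so the ceiling equals $\lceil\frac{2+\a}{2}\kappa-1\rceil$ for $p$ large, yielding the exact equality.

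The one delicate case is $\a=\a_i$, where the limit $\frac{2+\a}{2}\kappa-1=i-1$ is an integer and $\lceil\cdot\rceil$ jumps there. The best that can be said is
\[ \left\lceil \frac{2+\a}{2}\sqrt{-\nu_1(p)}-1\right\rceil \in \{i-1,\, i\} = \left\{\left\lceil \frac{2+\a}{2}\kappa-1\right\rceil,\ \left\lceil \frac{2+\a}{2}\kappa\right\rceil\right\} \]
for $p$ large; since the outer integer part $[\cdot]$ is monotone, this lifts to the two-sided bound stated in the claim. This integer-limit ambiguity, and the fact that it survives after passing through the extra operations $\frac{1}{n}\lceil\cdot\rceil$ and $[\cdot]$, is the only genuinely subtle step; everything else is a discrete-quantity-eventually-equals-its-limit argument, and it works uniformly in $n$ so that the threshold $p^\star(\a)$ can be taken the same as in Theorems \ref{teo-morse-1} and \ref{teo-morse-2}.
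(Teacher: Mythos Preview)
Your proof is correct and follows exactly the route the paper intends: it states the corollary as a direct consequence of formula \eqref{eq:Morse-simmetrico} together with the limits of $\nu_j(p)$ from Propositions \ref{prop:primo-autovalore-1} and \ref{prop-3-5}, and explicitly notes that the thresholds $p^\star$, $p^\star_2$ coincide with those of Theorems \ref{teo-morse-1} and \ref{teo-morse-2}. Your argument simply spells out the details --- the left-continuity of $\lceil\cdot\rceil$ for the $\nu_2$-term and the two-element ambiguity at $\a=\a_i$ for the $\nu_1$-term --- in the same spirit as the proofs of those theorems, and correctly observes that since the threshold is determined by the inner ceiling (independent of $n$), it is uniform in $n$.
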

		Here the exponents $p^\star$ and $p^\star_2$ are the same of Theorem \ref{teo-morse-1} and \ref{teo-morse-2} and $\kappa$ is as defined in \eqref{kappa}.

\subsection{ Nonradial $n-$symmetric solutions} 

Next we turn to the least energy $n$-invariant solutions $ u_{p,n}^1\in H^1_{0,n}$  constructed before, and show that, at least for some values of $n$, they do not coincide with the radial positive solution $u_p^1$. In this case it  is interesting to understand if the least energy solutions $u_{p,n}^1$ and $u_{p,m}^1$ coincide or not. Luckily this last issue has been tackled by Gladiali in \cite{Gladiali-19} who showed the following: 
\begin{proposition}[\cite{Gladiali-19} Theorem 1.1 and Theorem 1.2]\label{prop-gladiali}
Let $u\in H^1_{0,n}$ be a solution to \eqref{H}, with $p\geq 2$ if $u$ changes sign,
that satisfies  
\[m_n(u)\leq 2.\]
Then, or $u$ is radial or, else there exists a direction $e$ such that $u$ is symmetric with respect to this direction in a sector of angle $\frac {2\pi}n$ and it is strictly monotone in the angular variable in a sector of amplitude $\frac \pi n$.
\end{proposition}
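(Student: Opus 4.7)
The plan is to follow the angular-derivative / polarization strategy developed by Pacella and Weth for foliated Schwarz symmetry, adapted to the quotient setting of $\mathcal G_{2\pi/n}$-invariant functions. The central object is the angular derivative $u_\theta = \partial_\theta u$: since $\psi \mapsto R_\psi u$ is a smooth one-parameter family of solutions to \eqref{H} and $u \in H^1_{0,n}$, differentiating at $\psi=0$ gives $u_\theta \in H^1_{0,n}$ with $L_u(u_\theta)=0$. If $u$ is not radial, then $u_\theta \not\equiv 0$, and the proof reduces to controlling the nodal structure of $u_\theta$.

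First, I would analyse the nodal domains of $u_\theta$ inside one fundamental sector $S$ of angular amplitude $2\pi/n$. By the $\mathcal G_{2\pi/n}$-invariance of $u$, every nodal domain of $u_\theta$ in $B$ is obtained by rotating a single nodal domain in $S$, so the restrictions of $u_\theta$ to such domains, re-symmetrized by the group action and extended by zero, lie in $H^1_{0,n}$ and belong to the kernel of $Q_u$. A Courant-type argument then shows that if $u_\theta$ has $k$ nodal domains in $S$, one can produce a $k$-dimensional subspace of $H^1_{0,n}$ on which $Q_u$ is non-positive. Combined with a polarization-type perturbation (using $u - u\circ\sigma_e$ where $\sigma_e$ is the reflection across an axis through the origin) to convert non-positivity into strict negativity on a subspace of codimension one, the bound $m_n(u)\le 2$ forces $k\le 2$. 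Equivalently, $u_\theta$ has at most one sign change in $S$, which means that $u$ is symmetric with respect to some direction $e$ inside $S$.

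The strict monotonicity of $u$ in the angular variable inside a half-sector of amplitude $\pi/n$ then follows from the strong maximum principle and Hopf's lemma applied to $u_\theta$, viewed as a solution of the linear elliptic equation $-\Delta u_\theta = p|x|^\alpha |u|^{p-1} u_\theta$ on each half-sector with homogeneous Dirichlet condition on its two axes of symmetry. The hypothesis $p\ge 2$ in the sign-changing case is needed precisely to guarantee that the coefficient $p|x|^\alpha|u|^{p-1}$ is continuous (indeed $C^{p-2}$) across the nodal set of $u$, so that unique continuation and the classical linear maximum principles apply without further care.

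The main obstacle will be the counting step: one must verify that the restrictions of $u_\theta$ to its nodal domains in $S$, after re-symmetrization by the $\mathcal G_{2\pi/n}$-action, yield linearly independent and genuinely negative directions for $Q_u$ in $H^1_{0,n}$, rather than merely functions lying in $\ker Q_u$. This is where the polarization-type perturbation becomes essential, and where the sharp threshold $m_n(u)\le 2$ is used in a nontrivial way; the delicate point is to ensure that imposing $\mathcal G_{2\pi/n}$-invariance does not collapse the dimension of the negative subspace constructed in the ambient space $H^1_0(B)$.
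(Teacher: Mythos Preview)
The paper does not give its own proof of this proposition: it is quoted verbatim from \cite{Gladiali-19} (Theorems 1.1 and 1.2) and used as a black box. So there is no ``paper's proof'' to compare against here.

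That said, your sketch is faithful to the strategy actually employed in \cite{Gladiali-19}, which in turn adapts the Pacella--Weth angular-derivative method to the $n$-invariant setting. The key ingredients you identify --- that $u_\theta$ lies in $H^1_{0,n}$ and solves $L_u u_\theta=0$, that nodal domains of $u_\theta$ inside a fundamental sector yield test functions in $H^1_{0,n}$ making $Q_u$ nonpositive, and that the bound $m_n(u)\le 2$ then forces at most two such nodal domains in the sector --- are exactly the ones used there. Your remark on why $p\ge 2$ is needed in the sign-changing case (to keep $|u|^{p-1}$ Lipschitz so that strong maximum principle and Hopf's lemma apply cleanly to $u_\theta$) is also the correct explanation.

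One point where your outline is vaguer than the actual argument: the passage from ``$Q_u\le 0$ on a $k$-dimensional subspace'' to ``$m_n(u)\ge k$'' is not done via a polarization perturbation as you suggest, but rather by observing that $u_\theta$ (and hence each of its nodal-domain restrictions) is orthogonal in the relevant sense to the eigenfunctions of $L_u$, so that if $k\ge 3$ one can build a direction in $H^1_{0,n}$, orthogonal to the first two eigenfunctions, on which $Q_u\le 0$ --- contradicting $m_n(u)\le 2$ together with the nondegeneracy coming from the fact that $u_\theta$ itself is a zero mode. If you want to write this out in full you should consult \cite{Gladiali-19} directly; but as a sketch your proposal captures the right mechanism.
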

From this it follows that whenever $u_{p,n}^1$ and $u_{p,m}^1$  are nonradial then they do not coincide, due to the strict angular monotonicity. 

With the aid of this last consideration, and using Corollary \ref{cor:morse-sim}, we  can prove Theorem \ref{teo:existence-1}.
	\begin{proof}[Proof of Theorem \ref{teo:existence-1}.]
		We consider the functions $u_{p,n}^1$ obtained minimizing $\mathcal E(u)$ on $\mathcal N_n$ for $n=1, 2, \dots,  \lceil \frac \a2\rceil $. First we want to show that they are not radial so that the positive solution $u_{p,n}^1$ does not coincide with $u_p^1$. To prove this we recall that   
		 $m_n(u_{p,n}^1)=1$ by \eqref{morse-index-n-symmetric}. On the other hand by Corollary \ref{cor:morse-sim} we know that $m_n(u_p^1)=1+2\left[\frac 1n \left\lceil\frac \a 2\right\rceil\right]$
		for $p>p^\star$, so that $m_n(u_p^1)>1$ when $\frac 1n\left\lceil\frac \a 2\right\rceil  \ge 1$
		meaning $n \le  \left\lceil \frac \a 2\right\rceil$.
		Then $u_p^1$ does not coincide with $u_{p,n}^1$ for any $p>p^\star$ and $n\in \{1, \dots, \lceil \frac \a2\rceil \}$ by Morse index considerations.
			It lasts to prove that $u_{p,1}^1\neq u_{p,2}^1\neq \dots \neq u_{p,\lceil \frac \a2\rceil  }^1$. But this follows from Proposition \ref{prop-gladiali}
		by the strict angular monotonicity of $u_{p,n}^1$ in a sector of amplitude $\frac {\pi}n$.
	\end{proof}

As an easy consequence, adding the radial solution $u_p^1$ then we obtain
\begin{corollary} 
Let $\a >  0$ be fixed. Then, there exists an exponent $p^\star=p^\star(\a)$ such that problem \eqref{H} admits at least $1+ \lceil \frac \a2\rceil  $ distinct positive solutions
for every $p\in (p^\star(\a),\infty)$.
\end{corollary}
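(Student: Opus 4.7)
The plan is to obtain this corollary as a direct consequence of Theorem \ref{teo:existence-1} by simply adding the radial positive solution $u_p^1$ to the list of nonradial solutions already constructed. Since Theorem \ref{teo:existence-1} asserts the existence of at least $\lceil \frac{\alpha}{2}\rceil$ distinct positive nonradial solutions $u_{p,1}^1, u_{p,2}^1,\dots, u_{p,\lceil \alpha/2\rceil}^1$ for every $p > p^\star(\alpha)$, it only remains to check that none of them coincides with $u_p^1$.

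First I would simply invoke Theorem \ref{teo:existence-1} to obtain the $\lceil \frac{\alpha}{2}\rceil$ nonradial positive solutions. Next I would note that $u_p^1$, the radial least energy solution, is in particular a positive solution to \eqref{H} and belongs to every space $H^1_{0,n}$. The proof that $u_p^1$ is distinct from each $u_{p,n}^1$ is precisely the Morse index argument already carried out in the proof of Theorem \ref{teo:existence-1}: by \eqref{morse-index-n-symmetric} one has $m_n(u_{p,n}^1)=1$, whereas Corollary \ref{cor:morse-sim} gives $m_n(u_p^1)=1+2\left[\frac{1}{n}\lceil \frac{\alpha}{2}\rceil\right]\ge 3$ for $n\le \lceil \frac{\alpha}{2}\rceil$ and $p>p^\star$, so $u_p^1\neq u_{p,n}^1$ for $n=1,\dots,\lceil \frac{\alpha}{2}\rceil$.

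Combining these observations yields $1+\lceil \frac{\alpha}{2}\rceil$ distinct positive solutions to \eqref{H}. Since the proof is a pure bookkeeping step on top of Theorem \ref{teo:existence-1}, there is no genuine obstacle: the only point worth stressing is that the constant $p^\star(\alpha)$ in the statement is taken to be the same exponent of Theorem \ref{teo-morse-1} and Theorem \ref{teo:existence-1}, so that both the Morse index computation for $u_p^1$ and the multiplicity result for the $u_{p,n}^1$'s are simultaneously available.
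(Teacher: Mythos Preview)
Your proposal is correct and matches the paper's own argument, which simply states that the corollary follows by adding the radial solution $u_p^1$ to the $\lceil \frac{\alpha}{2}\rceil$ nonradial solutions of Theorem \ref{teo:existence-1}. In fact you are slightly more explicit than necessary: since Theorem \ref{teo:existence-1} already asserts that the $u_{p,n}^1$ are nonradial, their distinctness from the radial $u_p^1$ is immediate without re-running the Morse index comparison.
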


As noticed in \cite{Gladiali-19} the solutions $u_{p,n}^1$ of Theorem \ref{teo:existence-1} exhibit the same monotonicity and symmetry properties of the functions $\sin n\theta$, $\cos n \theta$. Up to a rotation, they are symmetric with respect any direction $(\cos\frac{h\pi}{n},\sin\frac{h\pi}{n})$ for $h=1,\dots n$, their maxima and minima 
 either are attained  alternately for $\theta=\frac{h\pi}{n}$  and they are strictly monotone in each sector among two consecutive critical points, or the maximum (minimum) is placed in the origin and the minima (maxima) are attained at $\theta=\frac{2h\pi}{n}$. This result is consistent with some previous existence results by \cite{EPW} where positive solutions to \eqref{H}  with $n$ symmetric concentration points placed along the vertex of a regular polygon are constructed via a finite dimensional reduction method.  It is reasonable to conjecture that our solutions $u_{p,n}^1$ coincide with theirs, and this conjecture is strengthen by the fact that we obtain exactly the same number of different solutions.

A consequence of the method we used in the proof is that when $n > \left\lceil\frac \a 2\right\rceil$ then $m_n(u_p^1)=1$ and we strongly believe that $u^1_{p,n}=u_p^1$ in this case, meaning that for $\a$ fixed we can construct only a finite number, $1+\lceil \frac \a2\rceil$, of positive distinct solutions to \eqref{H}. Of course 
the number of positive solutions we found increases with $\a$,  corresponding to its even values, but for any $\a>0$ fixed the number of distinct solutions is finite and depends on $\a$. 
Precisely  for every $n =1, \dots \lceil \frac \a2\rceil $ there exists an exponent $\bar p_n=\bar p_n(\a)$ (characterized by the condition $n\, \nu_1(p) < -  4/(2+\a)^2$ as $p> \bar p_n$) such that the least energy solution $u_{p,n}^1$ is nonradial as $p>\bar p_n$. In particular since $H^1_{0,1}$ coincide with $H^1_0(B)$ then the least energy solution of \eqref{H} is nonradial when $p>\bar p_1$.
\\
Finally we recall that following \cite{AG14} it can be proved that for $p$ sufficiently close to $1$ problem \eqref{H} possesses a unique positive solution. 
The Morse index of $u_p^1$ as $p\to 1$ has been studied in \cite{Amadori} where it has been shown that there exists a $\delta>0$ such that $m(u_p^1)=1$ for $p\in (1,1+\delta)$ and that $\lceil \frac \a2\rceil$ branches of nonradial solutions bifurcate from the curve of radial solutions $p\mapsto u^1_p$  as $p\in(1,\infty)$. 
Such branches are made up by functions in $H^1_{0,n}$ and detach exactly at $\bar p_n$, it is therefore natural to conjecture that they coincide with the curve $(p, u^1_{p,n})$ as $p> \bar p_n$.


\

Next we turn to the case of nodal solutions.

\begin{proof}[Proof of Theorem \ref{teo:existence-2}.] 
	We consider the functions $u_{p,n}^2$ obtained minimizing $\mathcal E(u)$ on $\mathcal N_{n, \nod}$ for $n=1, 2, \dots, \lceil \frac {2+\a}2 \kappa-1\rceil $. First we show that they are not radial so that the solutions $ u_{p,n}^2$ do not coincide with the least-energy nodal radial solution $u_p^2$. To prove this we recall that by previous considerations $m_n(u_{p,n}^2)= 2$	for every $p>1$.
On the other hand by Corollary \ref{cor:morse-sim} we know that 
\[m_n(u_p^2) \ge  2+ 2\left[ \frac 1n  \left\lceil\frac{2+\alpha}{2}\kappa-1\right\rceil  \right]+2\left[ \frac 1n \left\lceil\frac{\alpha}{2}\right\rceil\right]\] 
for $p>p^\star_2$ where $\kappa$ is as defined in \eqref{kappa}, so that 
	$m_n(u_p^2)>2$ if $\frac 1n \left\lceil\frac{\alpha}{2}\right\rceil\geq1$ or $\frac 1n\left\lceil\frac{2+\alpha}{2}\kappa-1\right\rceil \geq 1$. But from \eqref{kappa} it suffices that $\frac 1n\left\lceil\frac{2+\alpha}{2}\kappa-1\right\rceil \geq 1$ meaning that $n\le \left\lceil\frac{2+\alpha}{2}\kappa-1\right\rceil $.
	\\
	Then $u_p^2$ does not coincide with $u^2_{p,n}$ for any $p>p^\star_2$ and $n\in \{1, \dots, \lceil \frac {2+\a}2\kappa-1 \rceil \}$,
	 and Proposition \ref{prop-gladiali} yields that,  for $p\geq 2$ every $u_{p,n}^2$ is strictly increasing w.r.t.~the angular variable in a sector of amplitude $\frac {\pi}n$, and strictly decreasing in the subsequent sector of amplitude $\frac \pi n$.
	In particular $u_{p,h}^2\neq u_{p,n}^2$ as $h\neq n \in \{1, \dots, \lceil \frac {2+\a}2\kappa-1 \rceil \}$  and $u_{p,n}\neq u_p^2$ for every $p>p_2^*$ which concludes the proof. 
\end{proof}

As an easy consequence, adding the radial solution $u_p^2$, then we obtain
\begin{corollary}
Let $\a\geq 0$ be fixed. Then, there exists an exponent  $p_2^*=p_2^*(\a)$ such that problem \eqref{H} admits at least $ \lceil \frac {2+\a}2\kappa \rceil  $ distinct positive solutions
for every $p\in (p^*_2(\a),\infty)$.
\end{corollary}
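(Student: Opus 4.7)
The plan is to exhibit $\lceil \tfrac{2+\a}{2}\kappa\rceil$ distinct solutions as the union of the $\lceil \tfrac{2+\a}{2}\kappa - 1\rceil$ nonradial solutions produced by Theorem \ref{teo:existence-2} together with the radial solution $u_p^2$ discussed immediately before the statement. The elementary identity $\lceil x-1\rceil + 1 = \lceil x\rceil$, valid for every real $x$, then delivers the claimed count.

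More concretely, I would take $p_2^*(\a)$ to be exactly the exponent provided by Theorem \ref{teo:existence-2} and, for every $p > p_2^*(\a)$, invoke that theorem to obtain the family $\{u_{p,n}^2\}_{n=1}^{N}$ with $N := \lceil \tfrac{2+\a}{2}\kappa - 1\rceil$. Each $u_{p,n}^2$ is $n$-invariant and pairwise distinct from $u_{p,m}^2$ with $m\neq n$, by the strict angular monotonicity from Proposition \ref{prop-gladiali} that was exploited in the proof of Theorem \ref{teo:existence-2}.

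Next I would append $u_p^2$, the radial solution constructed in Section \ref{sec:2}, to this list. The crucial point is that each $u_{p,n}^2$ has already been shown to be nonradial inside the proof of Theorem \ref{teo:existence-2}, through the $n$-Morse index comparison $m_n(u_{p,n}^2)=2$ versus $m_n(u_p^2)\ge 4$ granted by Corollary \ref{cor:morse-sim} for $n\le N$. Consequently $u_p^2$ is different from every $u_{p,n}^2$, and the enlarged family has cardinality $N+1 = \lceil \tfrac{2+\a}{2}\kappa - 1\rceil + 1 = \lceil \tfrac{2+\a}{2}\kappa\rceil$.

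The main (and essentially only) obstacle is checking the distinctness of the newly adjoined $u_p^2$ from each nonradial $u_{p,n}^2$, which is handled automatically by the Morse-index-based nonradiality argument already recorded in the proof of Theorem \ref{teo:existence-2}; the rest reduces to the ceiling identity $\lceil x\rceil = \lceil x-1\rceil + 1$. No compactness, variational, or asymptotic input beyond what was needed for Theorem \ref{teo:existence-2} is required.
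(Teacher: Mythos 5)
Your proposal is correct and is essentially identical to the paper's own (implicit) argument: the authors obtain the corollary precisely by adjoining the radial solution $u_p^2$ to the $\lceil \frac{2+\a}{2}\kappa-1\rceil$ nonradial $n$-invariant solutions of Theorem \ref{teo:existence-2}, distinctness being automatic since those were shown nonradial via the $n$-Morse index comparison, and the count follows from $\lceil x-1\rceil+1=\lceil x\rceil$. (Note only that the solutions in question are nodal, not positive; the word ``positive'' in the statement is evidently a slip carried over from the analogous corollary for $u_p^1$.)
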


Also the solutions $u_{p,n}^2$ of Theorem \ref{teo:existence-2} exhibit the same monotonicity and symmetry properties of the functions $\sin n\theta$, $\cos n \theta$, which are
consistent with some previous existence results by \cite{ZY} where nodal solutions to \eqref{H}  with  $n$ symmetric concentration points placed along the vertex of a regular polygon are constructed via a finite dimensional reduction method, for $p$ large.  See also the paper \cite{EMP} for the Lane-Emden case corresponding to $\a=0$. \\
 We do not know if the solutions found with our construction coincide or not with the ones in \cite{ZY}, even if they possess the same type of symmetries. Observe, in fact, that when $\a=0$ the solutions in \cite{GI} are not of the type of the ones in \cite{EMP} since the nodal line of the first ones does not touch the boundary while the nodal line of the second does. We suspect that also in the H\'enon regime the same behaviour is possible, namely some of our solutions should have nodal line touching the boundary while some other not, depending on $\a$ and $n$. It should be very interesting indeed to study the asymptotic behaviour of $n$-invariant least energy solutions for large values of $p$ in order to understand this point.
\\
Moreover also in this case the number of different solutions we can find increases with $\a$, but differently form the previous case the number changes corresponding to the integer values of   $\frac {2+\a}2\kappa$ instead of the even values of $\a$. This seems to be a new phenomenon which has never been highlighted before. \\

Finally the Morse index of $u_p^2$ has been studied also in \cite{Amadori} where it has been shown that there exists $\delta>0$ such that $m(u^2_p) = 2 \left\lceil\frac{2+\a}{2}\beta\right\rceil$ for $p\in (1, 1+\delta)$. Here $\beta\approx 2,\!305$ is another fixed number. 
	In the same paper $\left\lceil\frac{2+\a}{2}\kappa-1\right\rceil -\left[\frac{2+\a}{2}\beta\right]$ branches of nonradial solutions that bifurcate from the curve of radial solutions are produced, comparing the value of the Morse index (or, better of the eigenvalue $\nu_1(p)$) at the ends of the existence range. The number of the solutions obtained by bifurcation is lower than the one obtained here by minimization on rotationally invariant spaces. 
It seems that for $n=1, \dots \left\lceil\frac{2+\a}{2}\beta-1\right\rceil$ the solutions $u_{p,n}^2$ are nonradial for every $p>1$ and the two curves $p\mapsto  u_{p,n}^2$ and $p\mapsto u^2_{p}$ do not intersect each other. This is certainly true for $n=1$ because $m( u_{p,1}^2)=2$ by \cite{BW}, while $m(u^2_p)\ge 4$ by \cite[Theorem 1.1]{AG-sez2}.  Conversely for $n=\left\lceil\frac{2+\a}{2}\beta\right\rceil, \dots \left\lceil \frac{2+\a}{2}\kappa-1\right\rceil$ the curve  $p\mapsto  u_{p,n}^2$ would coincide with the one of radial solutions for $p$ under a certain value $p_n$, and then it would bifurcate becoming nonradial.

\remove{\AL Metto qui in po' di considerazioni/domande
	\\
	1) Per chiarezza scriverei in una proposizione, anche senza dim, l'esistenza e la positivit\`a della soluzione $u_{p,k}$, idem per la $\widetilde u_{p,k}$. Poi isolerei in una definizione il $k$-indice di Morse, osserverei che per le soluzioni $u_{p,k}$ si calcola con \cite{BW} e per le soluzioni radiali mediante la formula con gli autovalori singolari (vedi dopo nella nota 4). Questa parte introduttiva la scriverei in parallelo per la soluzione positiva  e quella nodale, tanto pi\'u se, come ho capito, il Teo. \ref{prop-gladiali} vale per entrambe. 
	\\
	2) Non mi \`e chiaro perch\'e dici che $u_{p,k}$ \`e un least energy solution perch\'e ha k-indice di Morse 1. Di solito io la vedo al contrario, dalla minimalit\`a discende cha ha indice 1, e dalla minimalit\`a sulla Nehari nodale che ha indice 2.
	\\
	3) Bisogna citare Gladiali-Ianni.
	\\
	4) 	 Il calcolo dell'indice di Morse simmetrico \`e scritto in modo molto veloce e per capirlo ho dovuto fare due conti. 
	Sicuramente nella parte generale della sezione 3 bisogna menzionare gli autovalori singolari $\widehat\L$ e la loro decomposizione,  io richiamerei il risultato di \cite{AG-sez2} che dice che l'indice di Morse \`e uguale al numero di autofunzioni singolari e che queste si scrivono in coordinate radiali come 
		\begin{equation}\label{decomp-autofunz}
		\phi_j(r,\theta) = \psi_i(t) \left( A \, \cos(h\theta) + B \, \sin(h\theta) \right)
		\end{equation}
		dove $\psi_i$ sono le solite autofunzioni singolari radiali generalizzate.
		Il contributo si $\widehat \L_j= \left(\frac{2+\a}{2}\right)^2 \nu_i(p) + h^2$ all'indice di Morse \`e 
		\begin{enumerate}[-]
			\item  1 se $h=0$,
			\item  2 se $1\le h\le \left\lceil \frac{2+\a}{2}\sqrt{- \nu_i(p)}\right\rceil -1$.
		\end{enumerate}
		In totale  ho $m(u^m_p) = m + 2 \sum\limits_{i=1}^m\left( \left\lceil \frac{2+\a}{2}\sqrt{- \nu_i(p)}\right\rceil -1\right) = 2 \sum\limits_{i=1}^m\left\lceil \frac{2+\a}{2}\sqrt{- \nu_i(p)}\right\rceil -m$, che \`e l'unica formula che abbiamo scritto e usato in Sezione 3.
		\\
		In questa sezione, invece, richiamerei il risultato di \cite{AG-sez2} che dice che anche negli spazi con simmetria l'indice di Morse \`e uguale al numero di autofunzioni singolari   \eqref{decomp-autofunz} che appartengono a $H^1_{0,n}$ e osserverei che questo succede se e solo se l'indice $h$ vale
		\begin{enumerate}[-]
			\item $h=0$ (da contributo 1),
			\item $h$ \`e un multiplo di $n$ compreso fra $1$ e $\left\lceil \frac{2+\a}{2}\sqrt{- \nu_i(p)}\right\rceil -1$ (da contributo 2). Hai scritto che $h$ deve essere un divisore di $n$, \`e una svista o ho capito male io?
		\end{enumerate}
	Quindi scriverei una proposizione o proposizione + corollario (senza dim, si rimanda a \cite{AG-sez2}) per dire che
	\begin{enumerate}[(a)]
		\item $m_n(u^m_p) =m + 2\sum\limits_{i=1}^m \left[ \frac{\left\lceil \frac{2+\a}{2}\sqrt{- \nu_i(p)}\right\rceil -1}{n}\right]$,
		\item se $n\ge  \left\lceil \frac{2+\a}{2}\sqrt{- \nu_1(p)}\right\rceil $, allora $n > \left\lceil \frac{2+\a}{2}\sqrt{- \nu_i(p)}\right\rceil$ anche per $i=2,\dots m$ e dunque  l'$n$-indice di Morse \`e esattamente $m$ (non so se questo basta per dire che le $u_{p,n}$ sono radiali),
		\item se $n=1 , \dots \left\lceil \frac{2+\a}{2}\sqrt{- \nu_1(p)}\right\rceil -1$, allora $m_n(u^m_p) \ge m + 2$ e le $u_{p,n}$ non sono radiali.
	\end{enumerate}	
	
Dopo passerei all'asintotica.

 Per la soluzione positiva ho $\nu_1(p)\to -1$ da sopra, dunque per $p>p_1^{\star}$ si ha $\left\lceil \frac{2+\a}{2}\sqrt{- \nu_1(p)}\right\rceil -1 = \left\lceil \frac{\a}{2}\right\rceil $ e quindi la $u_{p,n}$ \`e non radiale se $n=1, \dots \left\lceil \frac{\a}{2}\right\rceil $. Mi trovo con te.
 
		Per la soluzione con due zone nodali ho $\nu_1(p)\to -\kappa^2$ (non so se da sopra o da sotto). Mi pare che nel tuo conto ci sia un piccolo errore, a me viene che poich\'e $\lceil\cdot\rceil$ \`e inferiormente semicontinua per $p>p_2^{\star}$ si ha $\left\lceil \frac{2+\a}{2}\sqrt{- \nu_1(p)}\right\rceil -1 \ge \left\lceil \frac{2+\a}{2}\kappa\right\rceil -1$ (e vale l'uguale  se $\frac{2+\a}{2}\kappa$ non \`e intero).
	Ne segue che $u_{p,n}$ \`e non radiale se $n= 1,\dots \left\lceil \frac{2+\a}{2}\kappa\right\rceil -1$. 
\\
5) Guardando la (c) che ho scritto sopra mi viene in mente una cosa: per $n=1$ abbiamo caratterizzato  i $p(\a)$ per cui la soluzione di energia minima diventa non radiale (o forse solo dato una condizione sufficiente)? Dovrebbe essere una cosa tipo \textquotedblleft la soluzione di energia minima \`e non radiale per ogni $p$ tale che  $\nu_1(p) < -  \left(\frac{2}{2+\a}\right)^2$\textquotedblright.
In particolare guardando  l'asintotica della $u^1_p$ segue che per ogni $\a$ esiste $p^{\sharp}(\a)$ tale che la soluzione di energia minima \`e non radiale per $p>p^{\sharp}(\a)$, che dovrebbe essere il commento tuo.  E ovviamente per $u^2_p$ si ha $\nu_1(p)<-1< -\left(\frac{2}{2+\a}\right)^2$ e dunque la soluzione di energia minima \`e sempre non radiale, come gi\`a sapevamo. 
	
	{\F Vedi se ho risposto a tutto}}

\section{Appendix}

We report here the almost straightforward  proofs of some basic facts. First we show some characterizations of the weighted Sobolev spaces that we have used  in Section \ref{sec:3}, next we study the limit singular eigenvalue problems \eqref{eq:finale}.

\subsection{Some density properties in weighted Sobolev spaces}

Throughout the paper we have used the following notations for some weighted Lebesgue spaces:
\begin{align*}
{\mathcal L}:=& \big\{w:B\to \R \text{ measurable and s.t. } \int_B |x|^{-2} w^2 dx < +\infty\big \}, \\
{\mathcal L}(\R^2):=& \big\{w:\R^2\to \R \text{ measurable and s.t. } \int_{\R^2} |x|^{-2} w^2 dx  < +\infty\big \}
\end{align*}
and $\mathcal L_{\rad}$,  $\mathcal L_{\rad}(\R^2)$ as the subspaces of $\mathcal L(B)$ and of ${\mathcal L}$ made up by radial functions.
Starting from this we have introduced 
\[ \mathcal H_0:=H^1_0(B)\cap {\mathcal L}, \quad \mathcal H_{0,\rad} := H^1_0(B)\cap {\mathcal L}_{\rad},  \quad 
\mathcal D_{\rad}:= D^{1,2}(\R^2)\cap {\mathcal L}_{\rad}(\R^2), \]
where  $H^1_0(B)$ and $D^{1,2}(\R^2)$ are the usual spaces, namely 
 $H^1_0(B)$ is the closure of $C^{\infty}_0(B)$ under the norm $\left(\int_B |\nabla w|^2 dx \right)^{\frac{1}2}$ and $D^{1,2}(\R^2)$  is the closure of $C^{\infty}_0(\R^2)$ under the norm $\left(\int_{\R^2}  |\nabla w|^2 dx \right)^{\frac{1}2}$. 
Such weighted Sobolev spaces have been endowed with the natural norms
\begin{align*}
\| w \|_{\mathcal H_0} = & \left(\int_B \left( |\nabla w|^2 + |x|^{-2} w^2\right) dx \right)^{\frac{1}2}, \\
\| g\|_{\mathcal D_{\rad}}= & \left(\int_0^{\infty} \left( r |g'|^2 + r^{-1} g^2\right) dr \right)^{\frac{1}2}.
\end{align*}
We check that, using these norms, 	$C^{\infty}_0(B\setminus\{0\})$ is dense in $\mathcal H_0$ and similarly  $C^{\infty}_0(0,\infty)$ is dense in $\mathcal D_{\rad}$.
\begin{lemma}\label{prop-density}
	$\mathcal H_0$ is the closure of 	$C^{\infty}_0(B\setminus\{0\})$ with respect to the norm  $\| \cdot \|_{\mathcal H_0}$.
\end{lemma}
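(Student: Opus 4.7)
The inclusion $C^\infty_0(B\setminus\{0\}) \subseteq \mathcal{H}_0$ is immediate, so the claim reduces to density. The plan is a standard truncate--and--mollify argument carried out in two steps: first I would excise a shrinking neighborhood of the origin to replace $w$ with a function in $H^1_0(B)$ which vanishes near $0$; then I would approximate that function by smooth functions compactly supported in $B\setminus\{0\}$ using the classical density of $C^\infty_0(B)$ in $H^1_0(B)$.

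For the truncation, fix $w \in \mathcal{H}_0$ and $\delta > 0$. I would choose a radial cutoff $\chi_\epsilon \in C^\infty(\overline{B})$ with $\chi_\epsilon \equiv 0$ on $B_\epsilon$, $\chi_\epsilon \equiv 1$ outside $B_{2\epsilon}$, $0 \le \chi_\epsilon \le 1$, and $|\nabla \chi_\epsilon| \le C/\epsilon$, and set $w_\epsilon := \chi_\epsilon w$. The weighted part, $\int_B |x|^{-2}(1-\chi_\epsilon)^2 w^2 \le \int_{B_{2\epsilon}} |x|^{-2} w^2$, tends to $0$ by dominated convergence since $|x|^{-2} w^2 \in L^1(B)$. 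The Dirichlet part splits into $\int_B (1-\chi_\epsilon)^2 |\nabla w|^2 \to 0$ (again by dominated convergence) plus $\int_B w^2 |\nabla \chi_\epsilon|^2$. The main obstacle is this last term, because $|\nabla \chi_\epsilon|$ blows up like $1/\epsilon$. The key observation is that $\nabla\chi_\epsilon$ is supported in the annulus $B_{2\epsilon}\setminus B_\epsilon$, where $|x|^{-2} \ge 1/(4\epsilon^2)$, so
\[
\int_B w^2 |\nabla\chi_\epsilon|^2 \le \frac{C^2}{\epsilon^2}\int_{B_{2\epsilon}\setminus B_\epsilon} w^2 \le 4C^2 \int_{B_{2\epsilon}\setminus B_\epsilon} |x|^{-2} w^2 \to 0 \text{ as } \epsilon\to 0.
\]
This is exactly where the weighted integrability built into the definition of $\mathcal{H}_0$ is used; a generic $H^1_0$ function need not admit such a bound. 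This gives $\|w - w_\epsilon\|_{\mathcal{H}_0} < \delta/2$ for all small enough $\epsilon$.

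For the second step I would fix such an $\epsilon$ and appeal to the standard density of $C^\infty_0(B)$ in $H^1_0(B)$ to pick $\varphi_n \in C^\infty_0(B)$ with $\varphi_n \to w$ in $H^1_0(B)$. Setting $\varphi_{n,\epsilon} := \chi_\epsilon \varphi_n$ puts each approximant in $C^\infty_0(B \setminus \{0\})$, since $\varphi_n$ has compact support in $B$ and $\chi_\epsilon$ vanishes near $0$. The weight $|x|^{-2}$ is bounded by $\epsilon^{-2}$ on $\mathrm{supp}(\chi_\epsilon)$, hence
\[
\int_B |x|^{-2}(w_\epsilon - \varphi_{n,\epsilon})^2 \le \epsilon^{-2} \int_B (w - \varphi_n)^2,
\]
while the gradient term is dominated by $2\int_B |\nabla(w - \varphi_n)|^2 + 2\|\nabla\chi_\epsilon\|_\infty^2 \int_B (w-\varphi_n)^2$. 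Both quantities tend to $0$ as $n \to \infty$, so a large enough $n$ yields $\|w_\epsilon - \varphi_{n,\epsilon}\|_{\mathcal{H}_0} < \delta/2$, and the triangle inequality concludes the approximation. No heavier machinery (capacity, Hardy inequality, etc.) is needed, because the only delicate estimate is the one highlighted above and it is fueled directly by the hypothesis $w \in \mathcal{L}$.
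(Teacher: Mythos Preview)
Your proof is correct and follows essentially the same approach as the paper's: a radial cutoff $\chi_\epsilon$ (the paper's $\Psi$) to excise a neighborhood of the origin, the key estimate $\int w^2|\nabla\chi_\epsilon|^2 \lesssim \int_{B_{2\epsilon}\setminus B_\epsilon} |x|^{-2}w^2 \to 0$ which uses the weighted integrability hypothesis, and then a smooth approximation of the truncated function. The only cosmetic difference is that the paper approximates $\phi\Psi$ directly via density of $C^\infty_0(B\setminus B_\rho)$ in $H^1_0(B\setminus B_\rho)$, whereas you first approximate $w$ in $H^1_0(B)$ and then multiply by the cutoff; both routes work for the same reason, namely that $|x|^{-2}$ is bounded on the support of $\chi_\epsilon$.
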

\begin{proof}
	Let $\phi\in \mathcal H_0$ and  $\e>0$. We can choose $0<\rho<1/2$ so small that
	\begin{equation}\label{int-piccolo}
	\int_{B_{2\rho}}|\nabla \phi|^2+|x|^{-2}|\phi|^2 \ dx<\frac {\e^2}{33}
	\end{equation}
	Let $\Psi\in C^1(B)$ be a cut-off function with the properties
	\begin{equation}\label{eq:cut-off-2} \begin{split}
	0\le \Psi(x) \le 1 , \quad 
	\Psi(x) =\begin{cases}
	0 & \text{ as }  \ 0\le |x| \le \rho,  \\
	1 & \text{ as } \ 2\rho\leq |x|< 1 ,
	\end{cases}   \quad 
	\left| \nabla \Psi(x)\right|  \leq  
	\frac 2{\rho} . 
	\end{split} \end{equation} 
	First we show that
	\begin{equation}\label{prima-norma-piccola}
	\nor \phi(1-\Psi)\nor_{\mathcal H_0}<\e.
	\end{equation}
		Indeed using that, 	by the choice of $\psi$, 
		\[ |\nabla \left(\phi(1-\Psi)\right)|^2
		\leq 2(1-\Psi)^2|\nabla \phi|^2+2\phi^2|\nabla (1-\Psi)|^2 \le 2 |\nabla \phi|^2 + \frac{8}{\rho^2}\phi^2, \]
gives
	\begin{equation}\begin{split}
	\int_B |\nabla \left(\phi(1-\Psi)\right)|^2 dx  
	=\int_{B_{\rho}}|\nabla \phi|^2dx +\int_{B_{2\rho}\setminus B_{\rho}}|\nabla \left(\phi(1-\Psi)\right)|^2dx \\
	\le \int_{B_{\rho}}|\nabla \phi|^2dx+
	2 \int_{B_{2\rho}\setminus B_{\rho}}|\nabla \phi|^2 dx+\frac{8}{\rho^2} \int_{B_{2\rho}\setminus B_{\rho}}\phi^2 dx \\
	\le  2\int_{B_{2\rho}}|\nabla \phi|^2+ 32 \int_{B_{2\rho}\setminus B_{\rho}}|x|^{-2}\phi^2< \frac{32\e^2}{33}\label{eq:minore-di-epsilon}
	\end{split}
	\end{equation}
	where last step follows by  \eqref{int-piccolo}.
	Furthermore	\[\int_B |x|^{-2} \phi^2 (1-\Psi)^2 dx \le \int_{B_{2\rho}} |x|^{-2} \phi^2 dx <\frac {\e^2}{33}
		\]
		thanks to \eqref{int-piccolo}. 
	\\
	Next we show that there is a sequence $\phi_n\in C^\infty_0(B\setminus\{0\})$ with support contained in $B\setminus B_{\rho}$ such that 
	\begin{equation}\label{conv-phi-n}
	\phi_n\to \phi\Psi \ \ \text{ in }\mathcal H_0
	\end{equation}
	as $n\to \infty$. \eqref{conv-phi-n} together with \eqref{eq:minore-di-epsilon} implies that
	\[\nor \phi_n-\phi\nor_{\mathcal H_0}\leq \nor \phi_n-\phi\Psi\nor_{\mathcal H_0}+\nor \phi\Psi-\phi\nor _{\mathcal H_0}<2\e\]
	for $n$ large enough, showing the density of $C^\infty_0(B\setminus\{0\})$ into $\mathcal H_0$ and
	concluding the proof.\\
	We turn then to the proof of \eqref{conv-phi-n}. Indeed $\phi\Psi\in H^1_0(B\setminus B_{\rho})  $, and, since $C^\infty_0(B\setminus B_{\rho})$ is dense in $H^1_0(B\setminus B_{\rho})  $ there exists a sequence $\phi_n \in C^\infty_0(B\setminus B_{\rho})$ such that $\phi_n\to \phi\Psi$ in $H^1_0(B\setminus B_{\rho})$.
	Next we extend $\psi_n$ to be zero in $B_{\rho}$ so that $\phi_n\in C^\infty_0(B\setminus \{0\})$ and satisfies
	\begin{equation}\label{conv-grad}\int_B|\nabla (\phi_n-\phi\Psi)|^2 dx =
	\int_{B\setminus B_{\rho}}|\nabla (\phi_n-\phi\Psi)|^2 dx \to 0\end{equation}
	and clearly by the Poincar\'e inequality in $H^1_0(B\setminus B_{\rho})$
	\[\int_B |\phi_n-\phi\Psi|^2 dx =\int_{B\setminus B_{\rho}}|\phi_n-\phi\Psi|^2 dx \to 0\]
	as $n\to \infty$.
	From this last equality follows also that
	\[\begin{split}
	\int _B |x|^{-2}|\phi_n-\phi\Psi|^2dx &=\int_{B\setminus B_{\rho}}|x|^{-2}|\phi_n-\phi\Psi|^2 dx 
	\le \frac 1{\rho^2} \int_{B\setminus B_{\rho}}|\phi_n-\phi\Psi|^2 dx \to 0
	\end{split}\]
	and together with \eqref{conv-grad} proves \eqref{conv-phi-n}.
\end{proof}

	\begin{lemma}\label{lem-Drad} 
	$\mathcal D_{\rad}$ is the closure of 	$C^{\infty}_{0}(0,\infty)$ with respect to the norm $\| \cdot \|_{\mathcal D_{\rad}}$. 
	\end{lemma}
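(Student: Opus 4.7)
The proof should proceed in close analogy with the preceding Lemma \ref{prop-density}, but now we must cut off both near the singularity at $r=0$ and near infinity. Given $\phi\in \mathcal D_{\rad}$ and $\varepsilon>0$, the plan is to choose $\rho>0$ small enough and $R>0$ large enough so that
\[ \int_{B_{2\rho}}\bigl(|\nabla \phi|^2+|x|^{-2}\phi^2\bigr)\,dx + \int_{\mathbb R^2\setminus B_{R/2}}\bigl(|\nabla \phi|^2+|x|^{-2}\phi^2\bigr)\,dx < c\,\varepsilon^2 \]
for a suitable constant $c$; this is possible because $\phi\in D^{1,2}(\mathbb R^2)\cap \mathcal L_{\rad}(\mathbb R^2)$, so both integrals are finite on $\mathbb R^2$ and tend to zero on the tails.

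Then I would introduce two cut-off functions $\Psi_\rho,\Theta_R\in C^1(\mathbb R^2)$ with $0\le \Psi_\rho,\Theta_R\le 1$, $|\nabla\Psi_\rho|\le 2/\rho$, $|\nabla \Theta_R|\le 2/R$, such that $\Psi_\rho\equiv 0$ on $B_\rho$, $\Psi_\rho\equiv 1$ outside $B_{2\rho}$, and $\Theta_R\equiv 1$ on $B_R$, $\Theta_R\equiv 0$ outside $B_{2R}$. Setting $\widetilde \phi:=\phi\,\Psi_\rho\,\Theta_R$, I claim $\|\phi-\widetilde\phi\|_{\mathcal D_{\rad}}<\varepsilon/2$. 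The estimate near the origin is the very same one carried out in the proof of Lemma \ref{prop-density}, because $\Theta_R\equiv 1$ there. The estimate near infinity is symmetrical: from $|\nabla(\phi(1-\Theta_R))|^2\le 2|\nabla\phi|^2+2\phi^2|\nabla \Theta_R|^2$ and the bound $|\nabla \Theta_R|^2\le 4/R^2\le 4|x|^{-2}$ on the support of $\nabla \Theta_R\subset \{R<|x|<2R\}$, one gets
\[ \int_{\mathbb R^2}|\nabla(\phi(1-\Theta_R))|^2\,dx \le 2\int_{|x|>R}|\nabla \phi|^2\,dx + 8\int_{|x|>R}|x|^{-2}\phi^2\,dx, \]
which is less than $c\,\varepsilon^2$ by the choice of $R$; the integral $\int |x|^{-2}\phi^2(1-\Theta_R)^2\,dx$ is treated analogously.

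Finally, $\widetilde\phi$ is a radial $H^1_0$ function supported in the annulus $A_{\rho,R}:=\{\rho<|x|<2R\}$. On this annulus the weight $|x|^{-2}$ is bounded by $\rho^{-2}$, so the norm $\|\cdot\|_{\mathcal D_{\rad}}$ is equivalent to the usual $H^1_0(A_{\rho,R})$ norm when applied to functions supported in $A_{\rho,R}$. Since $C^\infty_{0,\rad}(A_{\rho,R})$ is dense in the radial subspace of $H^1_0(A_{\rho,R})$ (by regularization in the radial variable on $(\rho,2R)$ and extension by zero), I can pick a sequence $\phi_n\in C^\infty_0(0,\infty)$ with $\mathrm{supp}\,\phi_n\subset (\rho,2R)$ such that the corresponding radial functions converge to $\widetilde\phi$ in $\mathcal D_{\rad}$. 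Combining with $\|\phi-\widetilde\phi\|_{\mathcal D_{\rad}}<\varepsilon/2$ yields $\|\phi-\phi_n\|_{\mathcal D_{\rad}}<\varepsilon$ for large $n$, proving density.

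The only mildly delicate point is the cut-off near infinity, because one cannot control $\phi^2|\nabla \Theta_R|^2$ by $|\nabla \phi|^2$ alone as in the standard $D^{1,2}(\mathbb R^2)$ density proof; the decisive observation is that the extra weight $|x|^{-2}$ available in $\mathcal D_{\rad}$ exactly dominates $|\nabla \Theta_R|^2$ on the annular support of $\nabla \Theta_R$, so the $\mathcal L_{\rad}(\mathbb R^2)$-membership supplies the missing integrability.
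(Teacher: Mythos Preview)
Your proof is correct and follows essentially the same approach as the paper's: cut off both near $0$ and near infinity, use that the gradient of the outer cut-off is dominated by the weight $|x|^{-2}$ on its support, and then approximate the truncated function in $H^1_0$ of an annulus where the $\mathcal D_{\rad}$-norm is equivalent to the $H^1$-norm. The only cosmetic differences are that the paper uses a single cut-off with one parameter $\rho$ (vanishing on $[0,\rho)\cup(2/\rho,\infty)$ and equal to $1$ on $[2\rho,1/\rho]$) instead of your two separate parameters $\rho,R$, and it writes everything in the one-dimensional radial variable rather than in $2$D notation.
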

	\begin{proof}
	We take $\phi\in  D_{\rad}$ and  $\e>0$ and show that there exists $\psi \in C^{\infty}_{0}(0,\infty)$
		such that
		\begin{equation}\label{denso}
		\| \phi - \psi\|_{\mathcal D_{\rad}} < 2\e.
		\end{equation}
		To begin with, we  choose $0<\rho<1/2$ so small that
		\begin{equation}\label{int-piccolo-d}
		\int_0^{2\rho} \left(  r |\phi'|^2+r^{-1} \phi^2 \right) dr + \int_{\frac{1}{\rho}}^{\infty} \left( r |\phi'|^2 + r^{-1} \phi^2 \right) dr < \frac{\e^2}{33},
		\end{equation}
		and we take  a cut-off function $\Psi\in C^1_{\rad}(\R^2)$  with the properties
		\begin{equation}\label{eq:cut-off-2-d} \begin{split}
		0\le \Psi(r) \le 1 , \quad & 
		\Psi(r)=\begin{cases}
		0 & \text{ as }  \ 0\le r < \rho \ \text{ and } \ 2/\rho < r , \\
		1 & \text{ as } \ 2\rho\leq r \le  {1}/{\rho} ,
		\end{cases}   \\
		& \left| \Psi'(r)\right|\leq  \begin{cases}
		2/{\rho}   & \text{ as }  \rho \le r \le 2\rho , \\
		2\rho  & \text{ as } \  {1}/{\rho} \le 2/{\rho}.
		\end{cases}
		\end{split} \end{equation} 
		Clearly $\| \phi - \psi\|_{\mathcal D_{\rad}} \le 	\| \phi (1- \Psi)\|_{\mathcal D_{\rad}}  + \| \phi \Psi - \psi\|_{\mathcal D_{\rad}}$
		and \eqref{denso} follows after checking separately that
		\begin{align}\label{denso-2}
		\nor \phi(1-\Psi)\nor_{\mathcal D_{\rad}}<\e , 
		\intertext{and that there exists $\psi \in C^{\infty}_{0}(0,\infty)$ such that}
		\label{denso-3}
		\nor \phi\Psi - \psi \nor_{\mathcal D_{\rad}}<\e .
		\end{align}
		Concerning \eqref{denso-2} we have
		\begin{align*}\nonumber 
		\int_0^{\infty} r \,  |\big(\phi(1-\Psi)\big)'|^2 dr  \le  2 	\int_0^{\infty} r |\phi'|^2 (1-\Psi)^2 dr + 2 	\int_0^{\infty} r \phi^2|(1-\Psi)'|^2 dr 
		\intertext{and \eqref{eq:cut-off-2-d} gives} \nonumber 
		\le  2 \int_{0}^{2\rho}  r |\phi'|^2 dr +  2\int_{\frac{1}{\rho}}^{\infty} r |\phi'|^2  dr 
		+ 8\int_{\rho}^{2\rho}  \frac{r}{\rho^2}  \phi^2 dr + 8 \int_{\frac{1}{\rho}}^{\frac{2}\rho}  r\rho^2  \phi^2 dr \le \\
		2  \int_{0}^{2\rho}  r |\phi'|^2 dr + 2\int_{\frac{1}{\rho}}^{\infty} r |\phi'|^2  dr  +32 \int_{\rho}^{2\rho}  r^{-1} \phi^2 dr +  32\int_{\frac{1}{\rho}}^{\frac{2}\rho}  r^{-1} \phi^2 dr 		 < \frac{32\e^2}{33}
		\end{align*}
		by \eqref{int-piccolo-d}.
	Besides 
	\[\int_0^{\infty} r^{-1} |\phi(1-\Psi)|^2 dr \le \int_0^{2\rho} r^{-1} \phi^2 dr + \int_{\frac{1}{\rho}}^{\infty}  r^{-1} \phi^2 dr < \frac{\e^2}{33}
	\]
	by \eqref{eq:cut-off-2-d} and \eqref{int-piccolo-d}.	
	\\
	Turning to \eqref{denso-3}, it suffices to see that there exists a sequence $\psi_n\in C^{\infty}_{0}(0,\infty)$ with support contained in $(\rho , 2/\rho)$ such that 
		\[
		\nor \phi\Psi - \psi_n \nor_{\mathcal D_{\rad}}^2 =  	\int_{\rho}^{\frac{2}{\rho}} r \left| (\phi\Psi)'- \psi_n'\right|^2 dr  + \int_{\rho}^{\frac{2}{\rho}} r^{-1} \left| \phi\Psi'- \psi_n\right|^2 dr  \to 0
		\]
		as $n\to\infty$.
		But 	since $\phi\Psi \in H^1_{0,\rad}(B_{\frac{2}{\rho}}\setminus B_{\rho})$  it is clear that there is a sequence in $C^{\infty}_{0}(\rho, 2/\rho)$ such that 
		$\nor \phi\Psi - \psi_n\nor_{H^1_0(B_{\frac{2}{\rho}}\setminus B_{\rho})}\to 0$. Extending $\psi_n$ to zero on $(0,\rho)$ and $(\frac{2}{\rho},\infty)$ gives the needed sequence because clearly $\displaystyle \int_{\rho}^{\frac{2}{\rho}}  r \left| (\phi\Psi)'- \psi_n'\right|^2 dr \to 0$, but also 
		\begin{align*} \nonumber
	\int_{\rho}^{\frac{2}{\rho}} r^{-1} \left|\phi\Psi- \psi_n \right|^2 dr 
		\le \frac{1}{\rho^2} \int_{\rho}^{\frac{2}{\rho}} r \left|\phi\Psi- \psi_n \right|^2 dr 
		\intertext{ and using Poincar\'e inequality in $H^1_{0}(B_{\frac{1}{\rho}}\setminus B_{\frac{\rho}{2}})$  gives}
		\le  \frac{C}{\rho^2}  \int_{\rho}^{\frac{2}{\rho}}  r \left| (\phi\Psi)'- \psi_n'\right|^2 dr \to 0 .
		\end{align*}
		\end{proof}

\subsection{The limit eigenvalue problems}

	Here we describe the negative eigenvalues and the respective eigenfunctions of the two limit eigenvalue problems
	\[\tag{\ref{eq:finale}}
	\begin{cases} -\left(r \eta'\right)'= r \left(W^i+\frac{\beta^i }{r^2} \right) \eta \quad &   r\in(0,\infty), \\
	\eta\in {\mathcal D}_{\rad}   &\end{cases}
	\]
	 where $\mathcal D_{\rad}$ is as defined in \eqref{def:D},
	\begin{align*}
	W^1(r) = \frac{64}{(8+r^2)^2} ,  \qquad W^2(r)= \frac{8 \kappa^2\delta  r^{2\kappa-2}}{    \left(\delta +r^{2\kappa}\right)^2} 
	\end{align*}
	with  $\kappa= \frac{2+\gamma}{2}$ as in \eqref{kappa} and $\gamma$ and $\delta$ have been fixed in \eqref{eq:relazioni-gamma-delta-l}. 

Let
\begin{equation}\label{def-primo-autov-limite}
	\beta^i_1 = \inf\limits_{\psi\in \mathcal D_{\rad}\setminus\{0\}} \frac{\int_0^{\infty} r \left( |\psi'|^2 - W^i \psi^2\right) dr }{\int_0^{\infty} r^{-1}  \psi^2 dr } 
	\end{equation}

	If $\beta^i_1<0$,  since the functions $W^i$ decay at infinity the arguments of \cite[Proposition 3.1]{AG-sez2} can be adapted to see that 
	it is attained by a function $\eta^i_1\in {\mathcal D}_{\rad}$
	which solves \eqref{eq:finale} for $\beta^i=\beta^i_1$ in weak sense, i.e.
	\[\tag{\ref{eq:finale-sol}}
	\int_0 ^{\infty} r (\eta^i)' \varphi' \, dr = \int_0^{\infty} r \left( W^i +\frac{\beta^i }{r^2} \right) \eta^i \varphi \, dr 
	\]
	for every $\varphi \in {\mathcal D}_{\rad}$ or equivalently for every $\varphi\in C^{\infty}_0(0,\infty)$.
	In this case the function $\eta^i_1$ is called an eigenfunction related to $\beta^i_1$. 
	 \begin{lemma}\label{attained}
			If $\beta^i_1<0$, then it is attained by a function $\eta^i_1\in {\mathcal D}_{\rad}$
			which solves \eqref{eq:finale} for $\beta^i=\beta^i_1$ in weak sense.
		\end{lemma}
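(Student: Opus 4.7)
The plan is to apply the direct method of the calculus of variations to the Rayleigh quotient defining $\beta_1^i$ in \eqref{def-primo-autov-limite}. Both the numerator and the denominator are scale invariant under $\psi(r)\mapsto \psi(\lambda r)$, so I would take a minimizing sequence $\{\psi_n\}\subset \mathcal D_{\rad}$ normalized by $\int_0^\infty r^{-1}\psi_n^2\,dr=1$; the goal is to prove $\psi_n\rightharpoonup \eta_1^i$ weakly in $\mathcal D_{\rad}$ with $\eta_1^i\not\equiv 0$, and with convergence strong enough to pass to the limit in the indefinite term $\int_0^\infty rW^i\psi_n^2\,dr$.

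First I would establish boundedness of $\{\psi_n\}$ in $\mathcal D_{\rad}$. The explicit formulas show that $r\mapsto r^2W^i(r)$ is a bounded function on $(0,\infty)$ which, moreover, vanishes as $r\to 0^+$ and as $r\to\infty$. Hence
\[
\int_0^\infty rW^i\psi_n^2\,dr\le \|r^2W^i\|_\infty\int_0^\infty r^{-1}\psi_n^2\,dr=\|r^2W^i\|_\infty,
\]
and combining this with the minimizing property yields a uniform bound on $\int_0^\infty r|\psi_n'|^2\,dr$. Up to a subsequence, $\psi_n\rightharpoonup \eta_1^i$ weakly in $\mathcal D_{\rad}$ (and hence strongly in $L^2_{\mathrm{loc}}(0,\infty)$ by Rellich--Kondrachov).

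The main obstacle will be upgrading this to strong convergence in the weighted space $L^2(rW^i\,dr)$. For every $\varepsilon>0$, the vanishing of $r^2W^i$ at the endpoints produces $[a,b]\subset(0,\infty)$ outside of which $r^2W^i<\varepsilon$, and the normalization gives $\int_{(0,a)\cup(b,\infty)}rW^i\psi_n^2\,dr\le \varepsilon$ uniformly in $n$. On $[a,b]$ the weights $r$ and $r^{-1}$ are comparable to positive constants, so $\{\psi_n\}$ is bounded in $H^1([a,b])$ and strong $L^2([a,b])$ convergence to $\eta_1^i$ transfers to $L^2(rW^i dr|_{[a,b]})$ because $rW^i$ is bounded there. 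Splitting into the tails plus the middle and sending $\varepsilon\to 0$ gives
\[
\int_0^\infty rW^i\psi_n^2\,dr\to \int_0^\infty rW^i(\eta_1^i)^2\,dr.
\]

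To conclude I would first rule out $\eta_1^i\equiv 0$: otherwise the convergence just obtained forces $\int rW^i\psi_n^2\,dr\to 0$, so $\int r|\psi_n'|^2\,dr\to \beta_1^i<0$, which is impossible. By Fatou's lemma $c:=\int_0^\infty r^{-1}(\eta_1^i)^2\,dr\in (0,1]$, and by weak lower semicontinuity for the Dirichlet integral combined with the strong convergence for the $W^i$ term,
\[
\int_0^\infty r|(\eta_1^i)'|^2\,dr - \int_0^\infty rW^i(\eta_1^i)^2\,dr \le \beta_1^i<0.
\]
Since $c\in (0,1]$ and the left-hand side is negative, dividing by $c$ preserves the inequality; thus the Rayleigh quotient of $\eta_1^i$ is $\le \beta_1^i$, and must equal $\beta_1^i$ by the definition of the infimum. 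A standard first-variation computation finally yields that $\eta_1^i$ satisfies \eqref{eq:finale-sol} with $\beta^i=\beta_1^i$, which is the weak form of \eqref{eq:finale}.
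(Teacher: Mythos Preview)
Your proof is correct and follows essentially the same direct-method strategy as the paper: normalize a minimizing sequence in the weighted $L^2$ norm, get $\mathcal D_{\rad}$-boundedness from the bound on $r^2W^i$, pass to a weak limit, and use the decay of $r^2W^i$ to upgrade to strong convergence of the potential term before invoking lower semicontinuity. Two small remarks: your opening aside that the numerator is scale invariant under $\psi(r)\mapsto\psi(\lambda r)$ is not quite right (the $W^i$ term breaks it; only the Dirichlet and weighted-$L^2$ parts are invariant), but this plays no role in the argument since the normalization only uses homogeneity in $\psi$. Also, you explicitly rule out $\eta_1^i\equiv 0$, which the paper leaves implicit; this is a welcome addition.
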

		\begin{proof}
			Let us consider a minimizing sequence $\psi_n\in \mathcal D_{\rad}$
			with
			\begin{equation}\label{minimizing}
			\int_0^{\infty} r \left( |\psi_n'|^2 - W^i \psi_n^2\right) dr = \beta_n \int_0^{\infty} r^{-1} \psi_n^2 dr , \quad \beta_n \to \beta^i_1<0 .
			\end{equation}
			Without loss of generality we may take that $\psi_n$ is normalized such that 
			\begin{equation}\label{norm}
			\int_0^{\infty} r^{-1} \psi_n^2 dr =1   \end{equation}
			Hence $\psi_n$ is bounded in $\mathcal D_{\rad}$ because
			\[\int_0^{\infty} r |\psi_n'|^2 dr \le \int_0^{\infty} r W^i \psi_n^2 dr \le  \sup\limits_{[0,\infty)} \left|r^2 W^i (r)\right|  \int_0^{\infty} r^{-1} \psi_n^2 dr  \le C \]
			thanks to \eqref{norm} and the fact that $r^2 W^i(r) \to 0$ as $r\to\infty$.
			So,  up to a subsequence, $\psi_n \to \eta\in \mathcal D_{\rad}$ weakly in $\mathcal D_{\rad}$ and strongly in $L^2(B_R)$  for every $R>0$.
			Let us check that 
			\begin{equation} \label{pippo} 
			\int_0^{\infty} r W^i \psi_n^2 dr \to  \int_0^{\infty} r W^i \eta^2 dr .
			\end{equation}
			Indeed for every $\e>0$, there exists $R>0$ such that
			\begin{equation}\label{scelta-epsilon}
			\sup\limits_{[R,\infty)} \left|r^2 W^i (r)\right|  < \e .
			\end{equation}
			Next
			\begin{align*} \left| \int_0^{\infty} r W^i \left(\psi_n^2 - \eta^2 \right) dr \right| \le 
			\left|\int_0^R  r W^i \left(\psi_n^2 - \eta^2 \right) dr \right|+ \left| \int_R^{\infty} r W^i \left(\psi_n^2 - \eta^2 \right) dr \right|  .
			\end{align*}
			But
			\begin{align*}
			\left|\int_0^R  r W^i \left(\psi_n^2 - \eta^2 \right) dr \right|   \le\sup_{(0,R)}W^i\int _0^R r |\psi_n^2 -\eta^2| dr
			\le C \int _0^R r |\psi_n^2 -\eta^2| dr \to 0
			\end{align*}
			as $n\to \infty$ because  $\psi_n\to \eta$ in  $L^2(B_R)$, and 
			\begin{align*}
			\left|\int_R^{\infty}  r W^i \left(\psi_n^2 - \eta^2 \right) dr \right| \le 
			\sup\limits_{[R,\infty)} \left|r^2 W^i (r)\right| \left( \int_0^{\infty} r^{-1} \psi_n^2 dr +  \int_0^{\infty} r^{-1}\eta^2 dr \right)
			\le C \e 
			\end{align*}
			by \eqref{scelta-epsilon}
			and since the Fatou's Lemma implies that
			\[ \int_0^{\infty} r^{-1}\eta^2 dr\le \liminf_{n\to \infty}\int_0^{\infty} r^{-1} \psi_n^2 dr=1 .
			\]
			
			Next we check that $\eta$ minimizes the quotient in \eqref{def-primo-autov-limite}, namely 
			\[\int_0^{\infty} r |\eta'|^2 dr -\int_0^{\infty} r W^i\eta^2 dr - \beta^i_1\int_0^{\infty} r^{-1} \eta^2 dr \leq 0 .
			\]
			Since $\beta^i_1<0$, Fatou's Lemma applies giving that 
			\begin{align*} 
			\int_0^{\infty} r |\eta'|^2 dr -\int_0^{\infty} r W^i\eta^2 dr - \beta^i_1\int_0^{\infty} r^{-1} \eta^2 dr  \\ \leq
			\liminf_{n\to \infty} \Big(\int_0^{\infty} r |\psi_n'|^2 dr - \beta_n \int_0^{\infty} r^{-1} \psi_n^2 dr\Big)- \int_0^{\infty} r W^i\eta^2 dr \\
			\underset{\eqref{minimizing}}{=} \liminf_{n\to \infty} \int_0^{\infty} r W^i\psi_n^2 dr - \int_0^{\infty} r W^i\eta^2 dr = 0
			\end{align*}
			by \eqref{pippo}. 
			Eventually, as $\eta$ minimizes  the quotient in \eqref{def-primo-autov-limite}, it is standard to see that  \eqref{eq:finale-sol} holds.
		\end{proof}
	}
	
		Next, if 
	\[
	\beta^i_2:=\inf\limits_{\stackrel{\psi\in \mathcal D_{\rad}\setminus\{0\}}{\int_0^{\infty} r^{-1} \eta^i_1 \psi dr =0}} \frac{\int_0^{\infty} r \left( |\psi'|^2 - W^i \psi^2\right) dr }{\int_0^{\infty} r^{-1}  \psi^2 dr } < 0,
	\]
	the same arguments of Lemma \ref{attained} yield that it is attained by an eigenfunction $\eta^i_2\in \mathcal D_{\rad}$ which solves \eqref{eq:finale} for $\beta^i=\beta^i_2$ in weak sense.   Further such $\eta^i_2$ is the weak limit in ${\mathcal D}_{\rad}$ of a minimizing sequence $\eta^i_{2,n}$ which satisfies $	\int_0^{\infty} r^{-1} \eta^i_1 \eta^i_{2,n} dr =0$, and so 
	\begin{equation}\label{ortog-eigenf}
	\int_0^{\infty} r^{-1} \eta^i_1 \eta^i_2 dr =0.
	\end{equation}
	 Conversely, one can see that if \eqref{eq:finale} has a nontrivial solution $\eta^i$ corresponding to some $\beta^i<0$, then such $\beta^i$ is an eigenvalue according to \eqref{def-primo-autov-limite} and $\eta^i$ is the related eigenfunction.
	
	We prove that
	
	\begin{proposition}\label{prop:beta_1}
		As $i=1$, $\beta^1_1=-1$ is the only negative eigenvalue of \eqref{eq:finale}. It is simple and the related eigenfunction is
		\begin{equation*}
		\eta^1_1(r):=\frac{4r}{8+r^2} .
\tag{\ref{prima-autof-limite-1}}		\end{equation*}
	\end{proposition}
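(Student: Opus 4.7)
Plan for the proof: I would first verify by direct computation that the function $\eta_1^1(r) := 4r/(8+r^2)$ is a classical positive solution of the ODE in \eqref{eq:finale} with $\beta = -1$, belongs to $\mathcal{D}_{\rad}$, and satisfies $\int_0^\infty r^{-1}(\eta_1^1)^2\, dr = 1$. Multiplying that ODE by $\eta_1^1$ and integrating by parts on $(0,\infty)$ (the boundary terms vanish because $\eta_1^1(0)=0$ and $r(\eta_1^1)' \to 0$ at infinity) gives Rayleigh quotient exactly $-1$, so $\beta_1^1 \le -1 < 0$ by the variational characterization \eqref{def-primo-autov-limite}; Lemma \ref{attained} then ensures $\beta_1^1$ is attained by a nontrivial eigenfunction $\psi\in\mathcal{D}_{\rad}$.

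Next I would pass to the logarithmic variable $t = \log r$, $\Phi(t) := \psi(r)$. This substitution is an isometric isomorphism between $\mathcal{D}_{\rad}$ and $H^1(\R)$: a direct calculation yields $\int_0^\infty \left(r|\psi'|^2 + r^{-1}\psi^2\right) dr = \int_{\R} \left( |\Phi'|^2 + \Phi^2 \right) dt$, and transforms \eqref{eq:finale} with $W^1(r) = 64/(8+r^2)^2$ into the one-dimensional Schr\"odinger eigenvalue problem
\[
-\Phi'' - Q(t)\Phi = \beta\, \Phi \quad \text{on } \R, \qquad Q(t) := \frac{64\,e^{2t}}{(8 + e^{2t})^2} .
\]
The further shift $t \mapsto t - \tfrac12\log 8$ turns $Q$ into the classical P\"oschl--Teller potential $2\,\mathrm{sech}^2 t$, preserving the $H^1(\R)$ framework and the spectrum.

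To conclude I would exploit the standard supersymmetric factorization of the P\"oschl--Teller Hamiltonian. Setting $A := -d/dt - \tanh t$, a direct computation using $\tanh' t = \mathrm{sech}^2 t$ and $\tanh^2 t = 1 - \mathrm{sech}^2 t$ gives $A^*A = -d^2/dt^2 - 2\,\mathrm{sech}^2 t + 1$ and $A A^* = -d^2/dt^2 + 1$. Writing $H := -d^2/dt^2 - 2\,\mathrm{sech}^2 t$, we have $H + 1 = A^*A$ and $AA^* \ge 1$; the general operator identity $\sigma(A^*A)\setminus\{0\} = \sigma(AA^*)\setminus\{0\}$ therefore forces $\sigma(H) \subseteq \{-1\} \cup [0,\infty)$. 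Moreover $\ker A$ is one-dimensional, spanned by $\mathrm{sech}(t)$, which (after undoing the two substitutions) is precisely a positive scalar multiple of $\eta_1^1(r)$. Thus $\beta_1^1 = -1$ is the unique negative eigenvalue of \eqref{eq:finale} for $i = 1$ and it is simple, with eigenfunction $\eta_1^1$.

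The main obstacle will be making the operator-theoretic step rigorous on the correct domains: essential self-adjointness of $H$ on $C^\infty_0(\R)$ follows from boundedness of the potential, and the spectral identity $\sigma(A^*A)\setminus\{0\} = \sigma(AA^*)\setminus\{0\}$ is classical for closed densely defined $A$, but one has to identify the natural closures of $A$ and $A^*$ and verify that $\ker A = \{C\,\mathrm{sech}(t) : C\in\R\}$ lies in the relevant $H^1$-domain. Everything else amounts to elementary ODE and integration-by-parts manipulations.
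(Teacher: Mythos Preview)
Your proposal is correct and takes a genuinely different route from the paper's argument. The paper proceeds by direct Sturm--comparison methods: after observing (via \cite{DIPN=2}) that $\eta_1^1$ is a positive eigenfunction for $\beta^1_1=-1$, it assumes a second negative eigenvalue $\beta^1_2<0$ exists, with eigenfunction $\eta^1_2$ orthogonal to $\eta_1^1$ and hence vanishing at some $R>0$. It then confronts the restrictions of $\eta^1_2$ to $(0,R)$ and $(R,\infty)$ with the explicit solution $\zeta(r)=\dfrac{8-r^2}{8+r^2}$ of the same ODE at $\beta=0$, which vanishes at $\sqrt{8}$ and is sign--definite on each side. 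A first--eigenvalue argument on $(0,\sqrt{8})$ rules out $R\le\sqrt{8}$, and a Kelvin transform $r\mapsto r^{-1}$ reduces the outer region to the same situation, ruling out $R>\sqrt{8}$ as well.

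Your approach, by contrast, recognizes that the logarithmic change of variable $t=\log r$ carries $\mathcal{D}_{\rad}$ isometrically onto $H^1(\R)$ and turns the problem into the Schr\"odinger operator $H=-\dfrac{d^2}{dt^2}-2\,\mathrm{sech}^2 t$ (after a harmless shift). The Darboux/SUSY factorization $H+1=A^*A$, $AA^*=-\dfrac{d^2}{dt^2}+1$ with $A=-\dfrac{d}{dt}-\tanh t$ then yields $\sigma(H)=\{-1\}\cup[0,\infty)$ in one stroke, with $\ker A=\mathrm{span}\{\mathrm{sech}\}$ giving simplicity and the explicit eigenfunction. This is more conceptual and actually delivers more: the entire spectrum, not just the negative part. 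The price is invoking the spectral identity $\sigma(A^*A)\setminus\{0\}=\sigma(AA^*)\setminus\{0\}$ and essential self--adjointness of $H$; these are standard but external to the paper's toolbox, whereas the paper's argument stays entirely within the ODE/variational framework already set up and does not require recognizing the P\"oschl--Teller potential. It is worth noting that the paper's Kelvin inversion $r\mapsto r^{-1}$ is exactly the reflection $t\mapsto -t$ in your variable, which explains why both halves of the paper's argument are formally identical.
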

	\begin{proposition}\label{prop:beta_2}
		As $i=2$, $\beta^2_1=-\kappa^2$ is the only negative eigenvalue of \eqref{eq:finale}. It is simple and the related eigenfunction is
		\begin{equation}\label{prima-autof-limite-2}
		\eta^2_1(r):=\frac{r^{\kappa}}{\delta+r^{2\kappa}} .
		\end{equation}
	\end{proposition}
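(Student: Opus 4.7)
The plan is to reduce Proposition \ref{prop:beta_2} to the already established Proposition \ref{prop:beta_1} by means of a radial change of variables that maps the potential $W^2$ into (a scalar multiple of) $W^1$. Given the explicit form $W^2(r)=\frac{8\kappa^2\delta\, r^{2\kappa-2}}{(\delta+r^{2\kappa})^2}$, the natural choice is
\[ \sigma = r^{\kappa}\sqrt{8/\delta} , \qquad \psi(\sigma)=\eta(r),\]
which is modeled on the link between the Liouville profile $V$ and the singular profile $Z_{\gamma;\delta}$ encoded by the transformation \eqref{transformation-Liouville} of Remark \ref{relazioni-limite}, since $2\kappa=2+\gamma$.

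First I would verify that the above map is a bijection of $\mathcal D_{\rad}$ onto itself which preserves, up to constants, the two components of the norm $\|\cdot\|_{\mathcal D_{\rad}}$. A direct computation using $r^{-1}dr = \frac{1}{\kappa\sigma}d\sigma$ and $r^{2\kappa-1}dr=\frac{\delta}{8\kappa}\sigma\,d\sigma$ gives
\[ \int_0^{\infty} r^{-1}\eta^2 \, dr = \tfrac{1}{\kappa}\int_0^{\infty}\sigma^{-1}\psi^2\,d\sigma , \qquad \int_0^{\infty} r\,|\eta'|^2\,dr = \kappa\int_0^{\infty}\sigma\,|\psi'|^2\,d\sigma ,\]
so both weighted integrals are finite simultaneously. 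Thanks to the density result in Lemma \ref{lem-Drad} this is enough to transfer the variational framework between the two problems.

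Next I would show that the $\sigma$-equation satisfied by $\psi$ is precisely \eqref{eq:finale} for $i=1$ with eigenvalue parameter $\beta/\kappa^2$. Writing $\eta'(r)=\kappa r^{\kappa-1}\sqrt{8/\delta}\,\psi'(\sigma)$ and using $r\eta'=\kappa\sigma\psi'$ so that $(r\eta')'_r = \kappa^2 r^{\kappa-1}(\sigma\psi')'_\sigma$, the equation $-(r\eta')' = r(W^2+\beta/r^2)\eta$ divided by $r^{\kappa-1}$ becomes
\[ -(\sigma\psi')'_\sigma = \sigma\Bigl(\tfrac{64}{(8+\sigma^2)^2}+\tfrac{\beta/\kappa^2}{\sigma^2}\Bigr)\psi = \sigma\Bigl(W^1(\sigma)+\tfrac{\beta/\kappa^2}{\sigma^2}\Bigr)\psi .\]
Combined with the norm equivalence above, this shows that $\beta$ is a negative eigenvalue of \eqref{eq:finale} for $i=2$ with eigenfunction $\eta$ if and only if $\beta/\kappa^2$ is a negative eigenvalue of \eqref{eq:finale} for $i=1$ with eigenfunction $\psi$. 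Proposition \ref{prop:beta_1} then immediately yields that the unique such $\beta$ is $\beta_1^2 = -\kappa^2$, simple, with eigenfunction obtained by pulling back $\eta_1^1(\sigma)=\frac{4\sigma}{8+\sigma^2}$:
\[\eta(r) = \frac{4\,r^{\kappa}\sqrt{8/\delta}}{8+8\,r^{2\kappa}/\delta}=\frac{\sqrt{2\delta}\,r^{\kappa}}{\delta+r^{2\kappa}} ,\]
which is a scalar multiple of $\frac{r^{\kappa}}{\delta+r^{2\kappa}}$, as claimed.

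The only delicate step I foresee is the functional-analytic bookkeeping in the first paragraph of the proof: one must be sure that the change of variables preserves \emph{weak} solutions in $\mathcal D_{\rad}$, not just classical ones, and that the variational characterization analogous to \eqref{def-primo-autov-limite} is respected. This should be handled by approximating arbitrary $\psi \in \mathcal D_{\rad}$ with $C^{\infty}_0(0,\infty)$ functions via Lemma \ref{lem-Drad}, passing test functions through the change of variables (they remain compactly supported in $(0,\infty)$), and using the integral identities displayed above to pass to the limit. The remaining arguments are purely algebraic and follow immediately from Proposition \ref{prop:beta_1}.
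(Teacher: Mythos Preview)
Your proposal is correct and follows exactly the same route as the paper: both reduce the $i=2$ problem to Proposition \ref{prop:beta_1} via the change of variables $\sigma=\sqrt{8/\delta}\,r^{\kappa}$, $\psi(\sigma)=\eta(r)$, which sends $W^2$ to $W^1$ and rescales the eigenvalue by $1/\kappa^2$. Your write-up is in fact more detailed than the paper's two-line proof; note only the minor slip in the chain-rule line $(r\eta')'_r=\kappa^2 r^{\kappa-1}(\sigma\psi')'_\sigma$, where a factor $\sqrt{8/\delta}$ is missing, though your final displayed equation is correct.
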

	
	\begin{proof}[Proof of Proposition \ref{prop:beta_1}]
		In \cite[Sec 5]{DIPN=2} it has been shown that $\beta^1_1=-1$ is the first eigenvalue of \eqref{eq:finale} and $\eta^1_1$ given by \eqref{prima-autof-limite-1} is a related eigenfunction.
		It remains to show that
		\[
		\beta^1_2:=\inf\limits_{\stackrel{\psi\in \mathcal D_{\rad}\setminus\{0\}}{\int_0^{\infty} r^{-1} \eta^1_1 \psi dr =0}} \frac{\int_0^{\infty} r \left( |\psi'|^2 - W^1 \psi^2\right) dr }{\int_0^{\infty} r^{-1}  \psi^2 dr } \ge 0 .
		\]
		Assume by contradiction that $\beta^1_2<0$, then there exists $\eta^1_2 \in \mathcal D_{\rad}$ which solves \eqref{eq:finale} for $\beta^1=\beta^1_2$ in weak sense, and satisfies
		\eqref{ortog-eigenf}. 
		By \eqref{ortog-eigenf} it follows that there exists $R>0$ such that $\eta^1_2(R)=0$, and $\eta^1_2$ is not constantly zero on $(0,R)$ either on $(R,\infty)$. So the functions  
		\[
		\psi_1(r) = \begin{cases} \eta^1_2(r) & \ \text{ as } 0\le r < R \\ 0 & \ \text{ as } r \ge R \end{cases} 
		\ \text{ and } \ 
		\psi_2(r) = \begin{cases} 0 & \ \text{ as } 0\le r \le R \\ \eta^1_2(r) & \ \text{ as } r \ge R .\end{cases} 
		\]
		are not trivial and belong to ${\mathcal D}_{\rad}$. Using them as test functions in the weak formulation \eqref{eq:finale-sol} gives
		\begin{align}
		\label{d-contr} \int_0^{\infty}  r \left( (\psi'_1)^2 - W^1  (\psi_ 1)^2 \right)dr = \int_0^{R} r \left( (\psi'_1)^2 - W^1(\psi_ 1)^2 \right)dr & = \beta^1_ 2 \int_0^{R} r^{-1} (\psi_1)^2 dr < 0   , \\
		\label{f-contr} \int_0^{\infty}  r \left( (\psi'_2)^2 - W^1(\psi_ 2)^2 \right)dr = \int_{R}^{\infty} r \left( (\psi'_2)^2 - W^1(\psi_ 2)^2 \right)dr & = \beta^1_ 2 \int_{R}^{\infty} r^{-1} (\psi_2)^2 dr < 0  .
		\end{align}
		Next we compare $\psi_1$ and $\psi_2$ with the function
		\[\zeta(r)= \frac{8-r^2}{8+r^2} , \quad r\ge 0\]
		which solves in classical sense 
		\begin{equation}\label{eq:zeta}  -\left(r \zeta'\right)'= r W^1\zeta \quad \qquad    r>0 \end{equation}
		and satisfies $\zeta > 0$ on $[0, \sqrt{8})$, $\zeta<0$ on $(\sqrt{8}, \infty)$, $\zeta(\sqrt{8})=0$.
		 Notice that $\zeta$ does not belong to $\mathcal D_{\rad}$, anyway its restriction to $[0,\sqrt{8}]$ belongs to
		  the space $H^1_{0,\rad}(0, \sqrt{8})$, i.e. the set of functions on  $(0, \sqrt{8})$ which have first order weak derivative satisfying
		\[ \int_0^{\sqrt{8}} r \left( (\psi')^2 + \psi^2 \right) dr < \infty \ \text{ and } \psi(\sqrt{8})=0 .\]
		Hence it is an eigenfunction for the regular eigenvalue problem
		\[
		\begin{cases} -\left(r \psi'\right)'= r \left(W^1+\sigma \right) \psi \quad &   r\in(0,\sqrt{8}), \\
		\psi \in  H^1_{0,\rad}(0, \sqrt{8})  
		\end{cases}\]
		corresponding to $\sigma=0$, and since $\zeta > 0$ on $[0, \sqrt{8})$  it must be a first eigenfunction, implying that
		\begin{equation}\label{d} \int_0^{\sqrt{8}} r \left( (\psi')^2 - W^1(\psi)^2 \right)dr  \ge 0  \ \text{ for every } \psi \in H^1_{0,\rad}(0, \sqrt{8}) .\end{equation}
		If $R\le \sqrt{8}$, then $\psi_1 \in H^1_{0,\rad}(0, \sqrt{8})$ since its support is contained in $[0,\sqrt{8}]$ and 
		\begin{align*} \int_0^{\sqrt{8}} r \left( (\psi_1')^2 + \psi_1^2 \right) dr  \le  \int_0^{\sqrt{8}} r (\psi_1')^2 dr  + 8 \int_0^{\sqrt{8}} r^{-1} \psi_1^2 dr .
		\end{align*}
		But in this case \eqref{d-contr} would contradict \eqref{d}, therefore $R>\sqrt{8}$.
		
		Next we show that  $R>\sqrt{8}$ clashes with \eqref{f-contr}, concluding the proof.
		To this aim we perform a Kelvin transform and define
		\begin{align*}
		\widehat \zeta (r) =  \zeta(r^{-1}) =  \frac{8r^2-1}{8r^2+1},  \qquad  \widehat \psi_2(r)  = \psi_2(r^{-1}) , \qquad \widehat W(r) = r^{-4}W^1(r^{-1}) = \frac{64}{(1+8r^2)^2}
		\end{align*}
		as $0\le r \le 1/\sqrt{8} $.
		Notice that $\widehat\zeta \in H^1_{0,\rad}(0,1/\sqrt{8})$ and $\widehat W(r) $ is bounded.
		It is not hard to see that $\widehat\zeta$ is an eigenfunction for
		\[
		\begin{cases} -\left(r \psi'\right)'= r \left( \widehat W+\sigma \right) \psi \quad &   r\in(0,1/\sqrt{8}), \\
		\psi \in H^1_{0,\rad}(0, 1/\sqrt{8})  
		\end{cases}\]
		corresponding to $\sigma=0$.
		Indeed 
		\[ -\left(r {\widehat\zeta}'(r)\right)' =  \left(r^{-1} {\zeta}'(r^{-1})\right)' =   -  \frac{d}{ds} \left( s {\zeta}'(s) \right)\Big|_{s=\frac{1}{r}} 
		\underset{\eqref{eq:zeta}}{=} r^{-2} \left( s W^1(s) \, \zeta(s)\right)\Big|_{s=\frac{1}{r}}   = r \widehat W \, {\widehat\zeta}(r) . \]
		Besides since $\widehat\zeta<0$ on $[0,1/\sqrt{8})$, it must be a first eigenfunction, implying that
		\begin{equation}\label{f} 
		\int_0^{1/\sqrt{8}} r \left( (\psi')^2 - \widehat W (\psi)^2 \right)dr  \ge 0  \ \text{ for every } \psi \in H^1_{0,\rad}(0, 1/\sqrt{8}) .
		\end{equation}
		On the other hand, as we are taking that $R>\sqrt{8}$, the support of $\widehat \psi_2$ is  contained in $[0, 1/\sqrt{8})$ and 
		$\widehat \psi_2\in H^1_{0,\rad}(0,1/\sqrt{8})$ since 
		\begin{align*}
		\int_0^{1/\sqrt{8}} r \left( ({\widehat\psi_2}')^2 + {\widehat\psi_2}^2\right) dr 
		= \int_{\sqrt{8}}^{\infty} \left(r (\psi_2')^2 +r^{-3} \psi_2^2 \right)dr 
		\le  \int_{\sqrt{8}}^{\infty} r (\psi_2')^2 dr + 8 \int_{\sqrt{8}}^{\infty} r^{-1} \psi_2^2 dr 
		\end{align*}
		with $\psi_2\in {\mathcal D}_{\rad}$.
		Eventually 
		\begin{align*}
		\int_0^{1/\sqrt{8}} r \left( (\widehat\psi_2')^2 - \widehat W  (\widehat\psi_2)^2 \right)dr = \int_0^{1/\sqrt{8}} r^{-3} \left(  (\psi_2'(r^{-1}))^2 - W(r^{-1}) (\psi_2(r^{-1}))^2 \right)dr  
		\\ = \int_{\sqrt{8}}^{\infty} r \left( ( \psi_2')^2 - W^1(\psi_2)^2 \right)dr \underset{R>\sqrt{8}}{=}  \int_{R}^{\infty} r \left( (\psi_2')^2 - W^1(\psi_2)^2 \right)dr <0
		\end{align*}
		by \eqref{f-contr}, which contradicts \eqref{f}.
	\end{proof}
	
	\begin{proof}[Proof of Proposition \ref{prop:beta_2}]
		It is easy to see that $\eta^2$ is an eigenfunction for \eqref{eq:finale} with $i=2$ related to some eigenvalue $\beta^2$ if and only if $\eta^2(r) = \eta^1(\sqrt{\frac{8}{\delta}} \, r^{\kappa})$ and $\eta^1$ is an eigenfunction for \eqref{eq:finale} with $i=1$ related to the eigenvalue $\beta^1=\beta^2/\kappa^2$. 
		Therefore Proposition \ref{prop:beta_1} concludes the proof.
	\end{proof}

\end{document}